\tikzset{>=latex}
\numberwithin{equation}{section}
\newtheorem{theorem}{Theorem}[section]
\newtheorem{lemma}[theorem]{Lemma}
\newtheorem{cor}[theorem]{Corollary}
\newtheorem{proposition}[theorem]{Proposition}
\newtheorem{defin}[theorem]{Definition}
\newtheorem{remark}[theorem]{Remark}
\newcommand\cB{{\mathcal B}}
\newcommand{\cC}{\mathcal{C}}
\newcommand{\cG}{\mathcal{G}}
\newcommand{\cL}{\mathcal{L}}
\newcommand{\Lp}{\mathcal L}
\newcommand{\pa}{\mathcal{P}}
\newcommand{\cW}{\mathcal{W}}
\newcommand{\bV}{\mathbb{V}}
\newcommand{\bmu}{\mu_0}
\newcommand{\bvf}{\overline{\vf}}
\newcommand{\of}{\bar{f}}
\newcommand{\oM}{\overline{M}}
\newcommand{\oT}{\overline{T}}
\newcommand{\vf}{\varphi}
\newcommand{\ve}{\varepsilon}
\newcommand{\bo}{\mathbf{1}}
\def\beq{\begin{equation}}
\def\eeq{\end{equation}}
\begin{document}

\title{A Gentle Introduction to Anisotropic Banach Spaces}
\author{Mark F. Demers}
\address{Department of Mathematics, Fairfield University, Fairfield CT 06824, USA}
\email{mdemers@fairfield.edu}

\thanks{
The author was partly supported by NSF grants DMS 1362420 and 1800321.}

\date{\today}

\begin{abstract}
The use of anisotropic Banach spaces has provided a wealth of new results in the study of hyperbolic dynamical systems
in recent years, yet their application to specific systems is often technical and difficult to access.
The purpose of this note is to provide an introduction to the use of these spaces in the study of hyperbolic
maps and to highlight the important elements and how they work together.  This is done via a concrete example
of a family of dissipative Baker's transformations.  Along the way, we prove an original result connecting such
transformations with expanding maps via a continuous family of transfer operators acting on a single Banach space.  
\end{abstract}
\maketitle

\section{Introduction}
\label{sec:intro}

The study of anisotropic Banach spaces on which the transfer operator associated with a hyperbolic dynamical system has
good spectral properties has been
the subject of intense activity during the past 15 years.  Beginning with the seminal paper \cite{BKL}, there have been
a flurry of papers developing several distinct approaches, first for smooth uniformly hyperbolic maps 
\cite{Ba1, BaT, liv gouezel, liv gouezel2}, then piecewise hyperbolic maps \cite{demers liverani, bg1, bg2}, 
and finally to hyperbolic
maps with more general singularities, including many classes of billiards \cite{dz1, dz3} and their perturbations \cite{dz2}.
This technique has also been successfully applied to prove exponential rates of mixing for hyperbolic flows, a notoriously difficult problem,
following a similar trajectory:  first to contact Anosov flows \cite{Li04, Tsu1}, then to contact flows with discontinuities \cite{BaL}
and finally to billiard flows \cite{bdl}.

The purpose of this note is to provide a gentle introduction to the study of anisotropic Banach spaces via a concrete model:
a family of dissipative Baker's transformations.  This family of maps provides a prototypical hyperbolic setting and allows
for the application of transfer operator techniques without the technical difficulties associated with other concrete models,
such as dispersing billiards.  On the other hand, it avoids the full generality necessary for an axiomatic treatment of Anosov or 
Axiom A maps as found,
for example, in \cite{BaT, liv gouezel2}.
Despite its simplicity, the study of this family of maps includes all the essential elements needed for the successful application of 
this technique to more complex systems:  a suitable set of norms, the Lasota-Yorke inequalities required to prove
quasi-compactness of the transfer operator, a Perron-Frobenius argument for characterizing the peripheral spectrum, and the
approximation of distributions in the Banach space norm.  Thus we hope it will serve as an easily accessible
introduction to the subject for those who wish to pursue this mode of analysis in more complex systems.
This note is based, in part, on introductory lectures given at the June 2015 workshop DinAmici IV held in Corinaldo, Italy. 


\subsection{A brief survey of anisotropic spaces}

As mentioned above, there has been a wealth of activity in the application of anistropic Banach spaces to the study of
hyperbolic systems.  Although they vary in their application, they have one feature in common:  they all exploit the fact that
in hyperbolic systems, the transfer operator improves the regularity of densities along unstable manifolds, and its dual improves
the regularity of test functions along stable manifolds.  This differentiated treatment of stable and unstable directions is
what makes these spaces anisotropic.

In this section, we outline some of the principal branches of these efforts.  Roughly, they can
be divided into three groups:
\begin{enumerate}

  \item {\bf The geometric approach pioneered by Liverani and Gouezel \cite{liv gouezel}.}  
  This approach follows from the seminal work
  mentioned above by Blank, Keller and Liverani \cite{BKL}, who viewed the transfer operator as acting on distributions
  integrated against vector fields and various classes of test functions.  An essential difference introduced in \cite{liv gouezel}
  was to integrate against smooth test functions on stable manifolds only, or more generally {\em admissible stable curves} 
  whose tangent vectors lie in the
  stable cone, as opposed to integrating over the entire phase space, thus simplifying the application
  of the method.  In the smooth case, this technique was able to exploit higher smoothness of the map to obtain
  improved bounds on the essential spectral radius of the transfer operator \cite{liv gouezel}, and is applied to generalized
  potentials in \cite{liv gouezel2}.  It has also yielded significant results on dynamical determinants and zeta functions for both
  maps \cite{liv deter, liv tsujii} and flows \cite{liv poll giu}.
  
  For systems with discontinuities, integrating on stable curves greatly simplifies the geometric arguments
  required to control the growth in complexity due to discontinuities.  This was implemented first
  for two-dimensional piecewise hyperbolic maps (with bounded derivatives) \cite{demers liverani}, and then for various classes of billiards \cite{dz1, dz3}
  and their perturbations \cite{dz2}.  It has also led to the recent proof of exponential mixing for some billiard flows \cite{bdl}.
  This geometric approach has proved to be the most flexible so far in terms of the types of systems
  studied.
  
  \item  {\bf The Triebel-type spaces introduced by Baladi \cite{Ba1}.}  These spaces are based on the use of Fourier transforms
  to convert derivatives into multiplication operators.  They exploit the hyperbolicity of the map by
  taking negative fractional derivatives in the stable direction and positive derivatives in the unstable direction.
  Initially, the coordinates for these operators were tied to the invariant 
  dynamical foliations 
  associated with the hyperbolic systems, and these were assumed to be $C^\infty$
   \cite{Ba1}.  They were later generalized to include more general families of foliations 
   (only $C^{1+\epsilon}$ smooth and such that the family of foliations is invariant, not each foliation individually), and
  successfully applied to piecewise hyperbolic maps \cite{bg1, bg2}.  They were also the first norms succesfully adapted to contact flows with
  discontinuities in \cite{BaL}.
  
  \item  {\bf  The Sobolev-type ``microlocal spaces" of Baladi and Tsujii \cite{BaT}.}  In some sense,
  these spaces are an evolution of the Triebel-type spaces described above.  The spaces still exploit the hyperbolicity of the map
  by using Fourier transforms and pseudo-differential operators, taking negative derivatives in the stable direction and positive
  derivatives in the unstable direction.  Now, however, the invariant foliations are replaced by cones in the cotangent space on
  which these operators act, and the operators are averaged with respect to an $L^p$ norm.
  
  	Such spaces, and the semi-classical versions of Faure and coauthors \cite{faure roy stro}, have produced extremely strong
	results characterizing the spectrum of the transfer operator for smooth hyperbolic maps and flows 
	\cite{faure tsujii1, faure tsujii2, faure tsujii3}.  
	This approach has also achieved the sharpest bound to date for the essential spectral radius of the transfer operator
	via a variational formula \cite{BaT2}, and numerous results on dynamical determinants and zeta functions \cite{baladi book2}. 
	However, they have not been applied to systems with discontinuities,
	and it seems some new ideas will be needed to generalize them in this direction. \cite{baladi char} contains
	a recent attempt to develop this capability.
\end{enumerate}

This is only a brief description of the types of Banach spaces used to study hyperbolic systems in recent years, and is by no means
a comprehensive listing.  A more thorough and nuanced 
account is contained in the recent article of Baladi \cite{baladi review}. 
In addition, there are alternative approaches that use similar types of anisotropic constructions adapted to special cases.  The
recent work \cite{galatolo}, for example, constructs spaces using an averaged type of bounded variation, which is shown to be
effective for a class of partially hyperbolic maps with a skew-product structure.

In the present paper, we will follow the geometric approach of Liverani and Gou\"ezel described in (1) above.
These spaces are the most concrete of the types listed above, the integrals being taken on stable manifolds against suitable
test functions, and as such fit our purpose here best, which is to provide a hands-on introduction to the subject with
as few pre-requisities as possible.


\subsection{A pedagogical example}
\label{ped}

Before introducing the class of hyperbolic maps for which we will construct an appropriate anisotropic Banach space,
we consider the following simpler example\footnote{This example was communicated to the author by C.~Liverani 
some years ago and has served as inspiration ever since.} 
of a contracting map of the interval.

For expanding systems, the fact that the transfer operator increases the regularity of densities is well-understood.  It is
this feature which generally enables one to derive the Lasota-Yorke inequalities needed to prove its quasi-compactness
on a suitable space of functions compactly embedded in $L^1$ with respect to some reference measure. 
Although $L^1$ is in general both too small and too large a space in the context of hyperbolic maps, 
it is instructive to see that similar inequalities can be derived in the purely contracting case as well, and to note that the
transfer operator in this case increases the regularity of {\em distributions}.

Let $I = [0,1]$, and $T : I \circlearrowleft$ be a $C^1$ map satisfying $|T'(x)| \le \lambda < 1$ for all $x \in I$.  It is a well-known
consequence of the contraction mapping theorem that there exists a unique $a \in I$ such that $T(a) = a$. 

For $\alpha \in (0,1]$, let $C^\alpha$ denote the set of H\"older continuous functions on $I$ with exponent $\alpha$.  
For $\vf \in C^\alpha$, define 
\begin{equation}
\label{eq:holder def}
H^\alpha(\vf) = \sup_{\substack{x,y \in I \\ x \neq y}} \frac{|\vf(x) - \vf(y)|}{|x-y|^\alpha}, \qquad \mbox{ and } 
|\vf|_{C^\alpha} = |\vf|_{C^0} + H^\alpha(\vf),
\end{equation}
where $|\vf|_{C^0} = \sup_{x \in I} |\vf(x)|$.  Since $T$ is $C^1$, if $\vf \in C^\alpha$, then $\vf \circ T \in C^\alpha$.

Let $(C^\alpha)^*$ be the dual of $C^\alpha$.  For a distribution $\mu \in (C^\alpha)^*$, we define the action of the transfer
operator $\Lp$ associated with $T$ via its dual,
\[
\Lp \mu(\vf) = \mu(\vf \circ T), \qquad \mbox{for all $\vf \in C^\alpha$}.
\]
Thus $\Lp\mu \in (C^\alpha)^*$ as well.    

For $\mu \in (C^\alpha)^*$, define
\[
\| \mu \|_\alpha = \sup_{\substack{\vf \in C^\alpha \\ |\vf|_{C^\alpha} \le 1}} \mu(\vf)  .
\]
The reader can check that $\| \cdot \|_\alpha$ satisfies the triangle inequality, and is a norm.  

If $f \in C^1(I)$, then we can identify $f$ with the measure $d\mu = f dm$, where
$m$ denotes Lebesgue measure on $I$.  With this identification, $C^1 \subset (C^\alpha)^*$. 
When we regard $f \in C^1$ as an element of $(C^\alpha)^*$, we will write $f(\vf) = \int_I f \vf \, dm$.

We define $\cB^\alpha$ to be the space $(C^\alpha)^*$ equipped with the $\| \cdot \|_\alpha$ 
norm.
Note that $\cB^\alpha$ is a Banach space of distributions that includes all Borel measures on $I$;
this includes the point mass at $a$, $\delta_a$.

We will work with the spaces $(\cB^\alpha, \| \cdot \|_\alpha)$ and $(\cB^1, \| \cdot \|_1)$, the latter of which
has the same definitions as $\cB^\alpha$, but with $\alpha = 1$.

For $\vf \in C^\alpha$, $n \ge 0$, define $\bvf_n = \int_I \vf \circ T^n \, dm$, recalling that $m$ denotes Lebesgue measure.

Now fix $\alpha < 1$.  For $\vf \in C^\alpha$ and $n \ge 0$, we estimate for $\mu \in \cB^\alpha$,
\begin{equation}
\label{eq:contract ly}
 \Lp^n \mu(\vf)  = \mu(\vf \circ T^n - \bvf_n) + \mu(\bvf_n) \le \| \mu \|_\alpha |\vf \circ T^n - \bvf_n|_{C^\alpha} + \| \mu \|_1 |\bvf_n|_{C^1}.
\end{equation}
Since $\vf \circ T^n \in C^\alpha$, there exists $u \in I$ such that $\vf \circ T^n(u) = \bvf_n$.  Thus for $x, y \in I$, we have
\begin{equation}
\label{eq:avg subtract}
\begin{split}
& |\vf \circ T^n(x) - \bvf_n| = |\vf \circ T^n(x) - \vf \circ T^n(u)| \le |\vf|_{C^\alpha} |T^n(x) - T^n(u)|^\alpha
\le \lambda^{n \alpha} |\vf|_{C^\alpha} \, , \\ 
& |\vf \circ T^n(x) - \bvf_n - \vf \circ T^n(y) + \bvf_n| \le |\vf|_{C^\alpha} \lambda^{n \alpha} |x-y|^\alpha .
\end{split}
\end{equation}
Thus $|\vf \circ T^n - \bvf|_{C^\alpha} \le \lambda^{\alpha n} |\vf|_{C^\alpha}$, and $|\bvf_n|_{C^1} \le |\vf|_{C^0}$.
Using these estimates in \eqref{eq:contract ly} and taking the supremum over $\vf \in C^\alpha$ with $|\vf|_{C^\alpha} \le 1$ yields,
\begin{equation}
\label{eq:ly 2}
\| \Lp^n \mu \|_{\alpha} \le \lambda^{\alpha n} \| \mu \|_\alpha + \| \mu \|_1 .
\end{equation}
A simpler estimate (without subtracting the average in \eqref{eq:contract ly}) shows that $\| \Lp^n \mu \|_1 \le \| \mu \|_1$.

Since the unit ball of $C^1$ is compactly embedded in $C^\alpha$, it follows by duality that the unit ball of
$\cB^\alpha$ is compactly embedded in $\cB^1$.  Thus these inequalities constitute a standard set of
Lasota-Yorke inequalities for $\Lp$ acting on $\cB^\alpha$.  Indeed, Hennion's argument \cite{hennion spec}
using the Nussbaum formula \cite{nussbaum} for the essential spectral radius implies that the essential spectral radius
is at most $\lambda^\alpha$.
Since $\delta_a$ belongs to $\cB^\alpha$
and satisfies $\Lp \delta_a = \delta_a$, we conclude that $\Lp$ is quasicompact as an operator on 
$\cB^\alpha$:
the spectral radius of $\Lp$ is 1, and the part of the
spectrum outside any disk of radius $> \lambda^\alpha$ is finite-dimensional \cite{hennion}.

Indeed, we can say much more about the peripheral spectrum.  Since $\| \Lp^n \mu \|_\alpha \le \| \mu \|_\alpha$, no element of
the peripheral spectrum contains any Jordan blocks.  Thus we have the following decomposition
of $\cL$.  There exist $N \in \mathbb{N}$, $\theta_j \in [0,1)$, and operators $\Pi_j$, $R : \cB^\alpha \circlearrowleft$, satisfying
$\Pi_j^2 = \Pi_j$, $\Pi_j R = R \Pi_j = 0$ and $\Pi_k \Pi_j = \Pi_j \Pi_k = 0$ for $k \neq j$, such that
\begin{equation}
\label{eq:contract decomp}
\Lp = \sum_{j=0}^N e^{2\pi i \theta_j} \Pi_j + R,
\end{equation}
and $R$ satisfies $\| R^n \|_\alpha \le Cr^n$ for some $C>0$, $r<1$, and all $n \ge 0$.  By convention we consider $\theta_0 = 0$,
so $\Pi_0$ denotes the projection onto the subspace of invariant distributions in $\cB^\alpha$.
Let $\bV_j := \Pi_j \cB^\alpha$.

To characterize the $\theta_j$, it is convenient to note that $\Lp$ is the dual of the Koopman 
operator $K\vf = \vf \circ T$ acting on $\vf \in C^\alpha$.  Indeed, by calculations similar to \eqref{eq:avg subtract} above, it follows that
\[
|K^n \vf |_{C^\alpha} \le \lambda^{\alpha n} |\vf |_{C^\alpha} + |\vf|_{C^0}, \quad \mbox{and}
\quad |K^n \vf |_{C^0} \le |\vf|_{C^0},
\]
thus $K$ is quasi-compact as an operator on $C^\alpha$, and its eigenspaces on the unit circle
are finite dimensional.
By \cite[Theorem VI.7]{reed}, the spectra of $K$ and $\Lp$ are the same so their eigenvalues on the unit circle coincide. 

Since $C^\alpha$ is closed under products,
and $K(\vf_1 \vf_2) = (K\vf_1)(K\vf_2)$, 
it follows that if $K\vf_1 = e^{2\pi i \theta_j} \vf_1$ and $K \vf_2 = e^{2\pi i \theta_k} \vf_2$, then
$K(\vf_1\vf_2) = e^{2\pi i (\theta_j + \theta_k)} \vf_1 \vf_2$, so that the peripheral spectrum of $K$,
and thus of $\Lp$, forms a group.  By quasi-compactness, the group must be finite, forcing 
$\theta_j \in \mathbb{Q}$ for all $0 \le j \le N$. 


Now suppose $\mu \in \bV_j$.  Then
for any $\vf \in C^\alpha$ and $n \ge 0$,
\[
|\mu(\vf)| = |\Lp^n \mu (\vf)| = | \mu(\vf \circ T^n) |
\le \| \mu \|_\alpha ( |\vf \circ T^n|_{C^0} + H^\alpha(\vf \circ T^n) ) 
\le \| \mu \|_\alpha ( |\vf |_{C^0} + \lambda^{n \alpha} H^\alpha(\vf)),
\]
by \eqref{eq:avg subtract}.  Taking the limit as $n \to \infty$ yields, $| \mu(\vf) | \le \| \mu \|_\alpha |\vf|_{C^0}$,
so that $\mu$ extends to a bounded linear functional on $C^0(I)$, i.e. it is a Radon measure.\footnote{Recall that
    a Radon measure is a measure that is both outer and inner regular and locally finite.} 

Thus each $\bV_j$ is comprised entirely of measures.  Since $\theta_j \in \mathbb{Q}$, there exists
$q_j \in \mathbb{N}$ such that for each $\mu \in \bV_j$, $\Lp^{q_k} \mu = \mu$, so that 
$\mu$ is an invariant measure for $T^{q_j}$.  But since $T^n$ is a $C^1$ contracting map for all
$n \ge 1$, $\delta_a$ is its only invariant probability measure.  Thus $\mu = c \delta_a$, for some 
$c \in \mathbb{R}$.

We have proved the following theorem. 

\begin{theorem}
\label{thm:contract}
$\Lp$ acting on $\cB^\alpha$ has a spectral gap:  1 is a simple eigenvalue
and the rest of the spectrum is contained in a disk of radius $r<1$.  Thus $\delta_a$ is
the only invariant distribution and $\Lp^n \mu$ converges at an exponential rate to $\delta_a$
for any $\mu \in \cB^\alpha$ with $\mu(1) = 1$.
\end{theorem}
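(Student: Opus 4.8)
\textit{Proof proposal.} The plan is to assemble the ingredients developed in the discussion preceding the statement. First I would record the two Lasota--Yorke inequalities established above: $\|\Lp^n \mu\|_\alpha \le \lambda^{\alpha n} \|\mu\|_\alpha + \|\mu\|_1$ from \eqref{eq:ly 2}, together with $\|\Lp^n \mu\|_1 \le \|\mu\|_1$, and note that the unit ball of $\cB^\alpha$ is compactly embedded in $\cB^1$ (by duality from the compact embedding $C^1 \hookrightarrow C^\alpha$). Hennion's theorem then yields quasi-compactness of $\Lp$ on $\cB^\alpha$ with essential spectral radius at most $\lambda^\alpha < 1$, while $\Lp \delta_a = \delta_a$ with $\delta_a \in \cB^\alpha$ pins the spectral radius at $1$. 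Since moreover $\|\Lp^n \mu\|_\alpha \le \|\mu\|_\alpha$, no peripheral eigenvalue can carry a Jordan block, which gives the decomposition \eqref{eq:contract decomp} with finitely many peripheral eigenvalues $e^{2\pi i \theta_j}$, spectral projectors $\Pi_j$, and a remainder $R$ satisfying $\|R^n\|_\alpha \le C r^n$ for some $r < 1$.

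The heart of the argument is to show that $N = 0$, i.e.\ that $1$ is the only peripheral eigenvalue and is simple. Here I would argue as in the text: duality with the Koopman operator $K\vf = \vf \circ T$ on $C^\alpha$ (quasi-compact, with finite-dimensional peripheral eigenspaces, by a calculation like \eqref{eq:avg subtract}) together with the multiplicativity $K(\vf_1 \vf_2) = (K\vf_1)(K\vf_2)$ forces the peripheral spectrum to be a finite subgroup of the unit circle, so every $\theta_j \in \mathbb{Q}$. Next, for $\mu \in \bV_j := \Pi_j \cB^\alpha$ and any $\vf \in C^\alpha$, the identity $\mu(\vf) = \Lp^n \mu(\vf) = \mu(\vf \circ T^n)$ combined with the contraction estimates $H^\alpha(\vf \circ T^n) \le \lambda^{\alpha n} H^\alpha(\vf)$ and $|\vf \circ T^n|_{C^0} \le |\vf|_{C^0}$ gives, in the limit $n \to \infty$, $|\mu(\vf)| \le \|\mu\|_\alpha |\vf|_{C^0}$; hence $\mu$ extends to a bounded functional on $C^0(I)$, i.e.\ a Radon measure. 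Choosing $q_j \in \mathbb{N}$ with $q_j \theta_j \in \mathbb{Z}$, every $\mu \in \bV_j$ is $\Lp^{q_j}$-invariant, hence a signed invariant measure for the $C^1$ contraction $T^{q_j}$; since $\delta_a$ is the unique invariant probability of every iterate of $T$, we get $\mu = c\, \delta_a$. Thus $\bV_j$ is one-dimensional and spanned by $\delta_a$; but $\Lp \delta_a = \delta_a$ places $\delta_a$ in $\bV_0$, which forces $j = 0$. Therefore $N = 0$, $\Pi_0$ is the rank-one projector onto $\mathbb{R}\delta_a$, and $\Lp = \Pi_0 + R$ with $\|R^n\|_\alpha \le C r^n$ --- this is precisely the spectral gap.

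For the convergence statement, given $\mu \in \cB^\alpha$ with $\mu(1) = 1$, write $\Lp^n \mu = \Pi_0 \mu + R^n \mu$. Since $\bo \circ T^n = \bo$, we have $\Lp^n \mu(1) = \mu(1) = 1$ for all $n$, so passing to the limit $\Pi_0 \mu(1) = 1$; writing $\Pi_0 \mu = c\, \delta_a$ and evaluating at $\bo$ gives $c = 1$, hence $\Pi_0 \mu = \delta_a$. Therefore $\|\Lp^n \mu - \delta_a\|_\alpha = \|R^n \mu\|_\alpha \le C r^n \|\mu\|_\alpha \to 0$ exponentially, and $\delta_a$ is the unique invariant distribution (up to scalar) in $\cB^\alpha$.

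\textit{Main obstacle.} Essentially all of the analytic content --- the Lasota--Yorke estimates and the contraction bounds --- has already been carried out, so the remaining work is bookkeeping: correctly identifying $\bV_0$ with $\mathbb{R}\delta_a$, ruling out the other peripheral eigenvalues, and fixing the normalization of $\Pi_0 \mu$ via the conserved quantity $\mu(1)$. The step I would expect to require the most care is the Perron--Frobenius-type argument --- showing that each peripheral eigendistribution is in fact a measure and then classifying it --- since this is exactly the part that becomes genuinely delicate in the hyperbolic setting treated later in the paper.
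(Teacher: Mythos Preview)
Your proposal is correct and follows the paper's argument essentially line for line: Lasota--Yorke plus Hennion for quasi-compactness, no Jordan blocks from the uniform bound, duality with the Koopman operator to get $\theta_j\in\mathbb{Q}$, the $C^0$-bound to upgrade peripheral eigendistributions to measures, and uniqueness of the invariant measure for the contraction to collapse everything onto $\mathbb{R}\delta_a$. The only slip is that for $\mu\in\bV_j$ with $\theta_j\neq 0$ the identity should read $|\mu(\vf)| = |\Lp^n\mu(\vf)| = |\mu(\vf\circ T^n)|$ rather than without absolute values, but this does not affect the argument.
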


\begin{remark}
An important idea that emerges from this example is that for a contracting map, it is not sufficient
to consider the space of measures on $I$, i.e. the dual of $C^0(I)$, if we expect the transfer
operator to increase regularity.  This is clear from considering \eqref{eq:ly 2} with $\alpha = 0$,
which provides no contraction.  Thus we must look in a larger space of distributions.

This is the main idea motivating the norms that we shall define for hyperbolic maps:  we will
construct spaces of distributions by integrating against H\"older continuous test functions
along stable manifolds.
\end{remark}


\section{Setting and Definition of the Banach Spaces}

In this section we define the class of hyperbolic maps with which we shall work.  Before beginning, it is helpful to outline the key elements of Section~\ref{ped}
that we will recreate in the hyperbolic setting.

\begin{itemize}
	\item[(i)] Two Banach spaces $(\cB, \| \cdot \|)$ and $(\cB_w, | \cdot |_w)$ (the strong and the weak)
	on which the transfer 
	operator $\Lp$ acts continuously.
	\item[(ii)] Domination of the weak norm by the strong norm, $| \cdot |_w \le \| \cdot \|$. 
	\item[(iii)]  The compactness of the unit ball of $\cB$ in $\cB_w$.
	\item[(iv)] Lasota-Yorke type inequalities: There exist $C \ge 0$, $\rho < 1$ such that for all $n \ge 0$,
	\[
	\| \Lp^n f \| \le C \rho^n \| f\| + C |f|_w,   \qquad | \Lp^n f |_w \le C |f |_w, \qquad \forall f \in \cB .
	\]
	\item[(v)] Existence of a conformal measure $m$ such that $\Lp^* m = m$. 
	\item[(vi)]  A Perron-Frobenius type argument to characterize the peripheral spectrum of $\Lp$. 
\end{itemize}

These are the essential ingredients needed in the application of the method, which are common to
more complicated classes of maps.  They also establish a sufficiently robust foundation from which
to apply various forms of perturbation theory \cite{demers liverani, dz2}.  We will show an
application of this to a degenerate limit in Section~\ref{sec:sing}.


\subsection{A family of Baker's transformations}
\label{bake}

In this section, we describe the family of Baker's transformations for which we will construct 
our Banach spaces.  Let
$M = [0,1]^2$ be the unit square in $\mathbb{R}^2$.  Fix $\kappa \in \mathbb{N}$, $\kappa \ge 2$, and $\lambda \in \mathbb{R}$ such that 
$0 < \lambda \le 1/\kappa$.  We describe a generalized $(\kappa, \lambda)$ Baker's transformation geometrically 
as follows:  We expand $M$ in the horizontal
direction by the factor $\kappa$ and contract $M$ in the vertical direction by the factor $\lambda$.  The resulting
rectangle is then split into $\kappa$ equal rectangles of length 1, and those rectangles are mapped isomorphically onto 
disjoint horizontal rectangles in $M$.  See Figure~\ref{fig:map}.

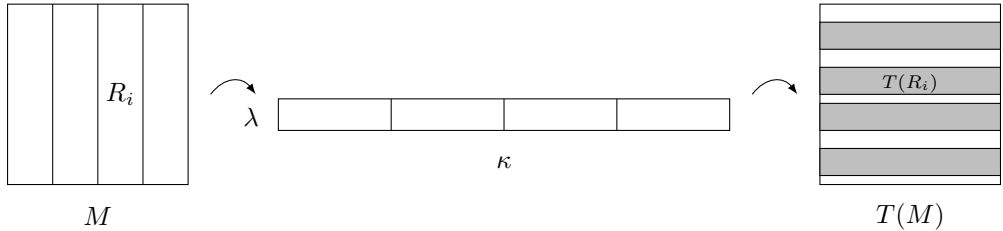
\begin{figure}[h]
\begin{tikzpicture}[x=6mm,y=6mm]
\draw (0,0) rectangle (4,4) ;

\draw (1,0) -- (1,4);
\draw (2,0) -- (2,4);
\draw (3,0) -- (3,4);

\node at (2.5, 2) {\small$R_i$};

 \node at (2,-.7){\small$M$};
  
 \draw[->] (4.5,2) .. controls (4.8,2.4) and (5.2,2.4) .. (5.5,2);

 \node at (5.4,1.5){\small$\lambda$};
  
  \draw (6,1.2) rectangle (16,1.9);
  \draw (8.5,1.2) -- (8.5,1.9);
  \draw (11,1.2) -- (11,1.9);
  \draw (13.5,1.2) -- (13.5,1.9);
  
   \node at (11,.5){\small$\kappa$};
  
 \draw[->] (16.5,2) .. controls (16.8,2.4) and (17.2,2.4) .. (17.5,2);
    
 \draw (18,0) rectangle (22,4);
 
 \filldraw[fill=black!25!white, draw=black] (18,.2) rectangle (22,.8);
\filldraw[fill=black!25!white, draw=black] (18,1.2) rectangle (22,1.8);
\filldraw[fill=black!25!white, draw=black] (18,2) rectangle (22,2.6);
\filldraw[fill=black!25!white, draw=black] (18,3) rectangle (22,3.6);

\node at (20,-.7){\small$T(M)$};
\node at (20,2.25){\tiny$T(R_i)$};
 \end{tikzpicture}
\caption{The map $T = T_{\kappa, \lambda}$ with $\kappa = 4$ and $\lambda < 1/4$.  The image $T(M)$
is shown in grey on the right.}
\label{fig:map}
\end{figure}

An equivalent definition is to subdivide $M$ into $\kappa$ vertical rectangles of width $1/\kappa$, 
denoted $R_i$.  The 
Baker's transformation $T = T_{\kappa, \lambda} : M \circlearrowleft$ is defined by requiring the action of $T|_{R_i}$ to be an
affine expansion by $\kappa$ in the horizontal direction and contraction by $\lambda$ in the vertical direction, such that
the sets $T(R_i)$, $i = 1,\ldots \kappa$, are pairwise disjoint (mod 0 with respect to Lebesgue measure).

This construction leaves us free to choose how to order the images $T(R_i)$ in $M$, 
and also in what orientation to place
them.  We do not specify these choices since they do not impact our analysis in any way.  All our results will apply to any choice
of $(\kappa, \lambda)$ Baker's transformation as defined above.

Note that if $\lambda = 1/\kappa$, then $T_{\kappa, \lambda}$ preserves Lebesgue measure on $M$.   
Otherwise, $T_{\kappa, \lambda}$
is dissipative.  We remark that for $\lambda < 1/\kappa$, $T_{\kappa, \lambda}$ is not a piecewise diffeomorphism of
$M$, i.e. $\cup_{i=1}^\kappa T_{\kappa, \lambda}(R_i) \subsetneq M$, so that formally
the general frameworks constructed in \cite{demers liverani} and \cite{dz3} for hyperbolic maps with singularities
do not apply, although their assumptions\footnote{\cite{dz3} has the further problem that (H1) of that paper is not satisfied in the present
setting.  (H1) would require that the Jacobian of $T_{\kappa,\lambda}$, which equals $\kappa \lambda$, is large compared
to the contraction obtained in the Lasota-Yorke inequalities, i.e. from Prop.~\ref{prop:ly}, $\kappa \lambda \ge \max \{ \lambda^\alpha, \kappa^{-\beta}\}$, which clearly fails when $\lambda$ is much smaller than $\kappa^{-1}$.}
on hyperbolicity and control of singularity sets are satisfied.
As we shall see, however, the method of those papers is readily implemented in this setting.

\begin{remark}
\label{rem:dist}
The requirement that $T_{\kappa,\lambda}$ be piecewise affine is clearly not necessary to implement any of the
techniques in this paper.  One could allow nonlinearities for $T$, as long as the key property of bounded distortion
were maintained.  That is:  Assume $x, y$ lie on the same stable manifold $W$ of $T$.  Then there exists $C>0$ such that
for all $n \ge 0$, $\log \frac{J^s_W T^n(x)}{J^s_W T^n(y)} \le C d(x,y)^p$ for some $p>0$, where $J^s_W T^n$ is the 
(stable) Jacobian of $T^n$ along $W$.
Since the inclusion of nonlinear effects will do nothing to illuminate the present technique, we omit it.
\end{remark}


\subsection{Transfer operator}

It is clear from the definition of $T = T_{\kappa, \lambda}$ that $T$ maps vertical lines into vertical lines
and horizontal lines to unions of horizontal lines.  We denote by $\cW^s$ ($\cW^u$) the set of full vertical (horizontal) line
segments of length 1 in $M$.
These sets comprise a collection of local stable and unstable manifolds, respectively.

We will work with classes of H\"older continuous functions $C^\alpha$, $\alpha \in [0,1]$.  For simplicity, when we
write $C^1$, we mean $C^\alpha$ with $\alpha =1$, which is simply Lipschitz.  

For $\alpha \in [0,1]$, define a H\"older norm along stable leaves as follows.  For any bounded function $\vf$,
\[
| \vf |_{C^\alpha(\cW^s)} = \sup_{W \in \cW^s} | \vf|_{C^\alpha(W)},
\quad \mbox{where}  \quad | \vf|_{C^\alpha(W)} = \sup_{x \in W} |\vf(x)| + H^\alpha_W(\vf),
\]
and $H^\alpha_W(\vf)$ is the H\"older constant along $W$ with exponent $\alpha$, as defined
in \eqref{eq:holder def}.
Let $C^1(\cW^s)$ denote the set of functions that are Lipschitz along stable leaves, i.e.
measurable functions $\vf$ with $| \vf |_{C^1(\cW^s)} < \infty$.  For $\alpha < 1$, define $C^\alpha(\cW^s)$ to be the
completion of $C^1(\cW^s)$ in the $| \cdot |_{C^\alpha(\cW^s)}$ norm.\footnote{We could simply define $C^\alpha(\cW^s)$ to be the
set of functions $\vf$ such that $| \vf |_{C^\alpha(\cW^s)} < \infty$, but we find the slightly smaller space convenient for
establishing the injectivity of the embedding $\cB \hookrightarrow \cB_w$ in Lemma~\ref{lem:embeddings}.  With this definition,
$C^\alpha(\cW^s)$ includes all functions $\vf$ such that $| \vf |_{C^{\alpha'}(\cW^s)} < \infty$ for some $\alpha' > \alpha$.}
Similarly, for each $W \in \cW^s$, $C^\alpha(W)$ denotes the completion of $C^1(W)$ in the $| \cdot |_{C^\alpha(W)}$ norm.
The reader can check that $C^\alpha(\cW^s)$ is a Banach space for all $\alpha \in [0,1]$.
We define $C^\alpha(\cW^u)$ similarly.

If $\vf \in C^\alpha(\cW^s)$, then $\vf \circ T \in C^\alpha(\cW^s)$.  This is because $T$ is $C^1$ on vertical lines, and for any
$W \in \cW^s$, $T^{-n}W$ is a union of elements of $\cW^s$ for all $n \ge 1$.  This allows us to define the
transfer operator $\Lp = \Lp_{\kappa, \lambda}$ associated with $T = T_{\kappa, \lambda}$ on $(C^\alpha(\cW^s))^*$
as follows,
\begin{equation}
\label{eq:dist def}
\Lp f (\vf) = f (\vf \circ T), \qquad \mbox{for all $\vf \in C^\alpha(\cW^s), \; f \in (C^\alpha(\cW^s))^*$.}
\end{equation}
Denote Lebesgue measure on $M$ by $m$.  We identify $f \in C^1(\cW^u)$ with the measure
$d\mu_f = f dm$.  With this identification, $f \in (C^\alpha(\cW^s))^*$, and
$\Lp f$ is associated with the measure having density
\begin{equation}
\label{eq:trans def}
\Lp f(x) = \frac{f \circ T^{-1}(x)}{JT(T^{-1}x)} = \frac{f \circ T^{-1}(x)}{\kappa \lambda}, 
\end{equation}
with respect to $m$.  Here, $JT$ denotes the Jacobian of $T$ with respect to $m$, which is simply
$\kappa \lambda$.  This is the familiar pointwise formula for the transfer operator.  With this pointwise definition,
$\Lp f = 0$ on $M \setminus TM$.  In addition, $m$ is a conformal measure for $\Lp$, i.e. $\Lp^*m = m$.

We will use the notation $f(\vf) = \int_M f \, \vf \, dm$ throughout the paper.  We will also denote
by $\bo$ the constant function with value 1 on $M$.



\subsection{Definition of the Banach spaces}
Considered one stable leaf at a time, the action of $T$ is very similar to that of the contracting map discussed in 
Section~\ref{ped}.  The weak norm defined below will play the analogous role to $\| \cdot \|_1$ from Section~\ref{ped}.
However, since the Baker's map is hyperbolic, the strong norm will have two components:  a {\em strong stable norm}, which plays
the role analogous to that of $\| \cdot \|_\alpha$ from Section~\ref{ped}, considering the map one stable leaf at a time; and
a {\em strong unstable norm}, which allows us to compare integrals along different stable leaves.  The strong unstable norm has
no analogue in Section~\ref{ped} and
exploits the fact that $\Lp$ increases regularity of functions along unstable leaves.

For $f \in C^1(\cW^u)$, define the weak norm of $f$ by
\[
| f |_w = \sup_{W \in \cW^s} \sup_{\substack{ \vf \in C^1(W) \\ |\vf|_{C^1(W)} \le 1}} 
\int_W f \, \vf \, dm_W,
\]
where $m_W$ is the arclength measure on $W$.

Next, for $W \in \cW^s$, we define the following coordinates,
\begin{equation}
\label{eq:coords}
W = \{ (s,t) \in M : s = s_W, t \in [0,1] \},
\end{equation}
where $s_W$ is the horizontal coordinate of the vertical segment $W$.
With these coordinates, we can define the distance between two curves
$W_1, W_2 \in \cW^s$, to be $d(W_1, W_2) = |s_{W_1} - s_{W_2}|$.  Moreover, for 
two test functions $\vf_i \in C^1(W_i)$, $i =1, 2$, we define the distance between
them by
\[
d_0(\vf_1, \vf_2) = \sup_{t \in [0,1]} |\vf_1(s_{W_1}, t) - \vf_2(s_{W_2}, t)|.
\]

Now fix $\alpha, \beta \in (0,1)$ with\footnote{The restriction $\beta \le 1- \alpha$ is used only
in the proof of Lemma~\ref{lem:mult}.} $\beta \le 1- \alpha$.  Define the strong stable norm of $f$ by
\[
\| f \|_s = \sup_{W \in \cW^s} \sup_{\substack{ \vf \in C^\alpha(W) \\ |\vf|_{C^\alpha(W)} \le 1}} 
\int_W f \, \vf \, dm_W.
\]
Finally, define the strong unstable norm\footnote{An alternative strong unstable norm is
$\displaystyle \| f \|_u = \sup_{W \in \cW^s}  \sup_{\substack{ \vf \in C^1(W) \\ |\vf|_{C^1(W)} \le 1}}  \int_W (d^u f) \, \vf \, dm_W$, where $d^u f$ is the derivative of $f$ in the unstable direction; however, this does not work well in more general systems when the distortion of $DT$ is only
H\"older continuous, as in the case of billiards, or when the unstable foliation is less regular.  So we adopt the definition of $\| f \|_u$ that is
analogous to that used in other works with low regularity \cite{demers liverani, dz1}.} of $f$ by
\[
\| f \|_u = \sup_{W_1, W_2 \in \cW^s} \sup_{\substack{\vf_i \in C^1(W_i) \\ |\vf_i |_{C^1(W_i)} \le 1 \\ d_0(\vf_1, \vf_2) = 0}}
d(W_1, W_2)^{-\beta} \left| \int_{W_1} f \, \vf_1 \, dm_{W_1} - \int_{W_2} f \, \vf_2 \, dm_{W_2} \right|  \, .
\]
Finally, the strong norm of $f$ is defined by $\| f \|_{\cB} := \| f \|_s + \| f \|_u$.

\begin{defin}
We define the weak space $\cB_w$ to be the completion of $C^1(\cW^u)$ in the weak norm, and
the strong space $\cB$ to be the completion of $C^1(\cW^u)$ in the strong norm.
\end{defin}

\begin{remark}
\label{rem:pairs}
One may view the norms $| \cdot |_w$ and $\| \cdot \|_s$ integrating against test functions on stable leaves as dual
in spirit to the notion of {\em standard pairs}, which have been used extensively in hyperbolic
dynamics.  Originally formulated by Dolgopyat \cite{dolgo contract} as a way to 
generalize the coupling method
introduced by Young \cite{young poly}, standard pairs have evolved to be one of the principal
tools in the analysis of the statistical properties of nonuniformly hyperbolic dynamical systems
(see \cite[Section 7.3]{chernov book} for an historical overview).  
\end{remark}


\subsection{Statement of results for $\lambda > 0$}

In this section, we summarize the principal results of the paper for the 
class of Baker's transformations $T_{\kappa, \lambda}$.

Although our spaces are defined abstractly as closures of $C^1(\cW^u)$ in the strong and
weak norms, the following lemma connects our spaces with distributions of order one on $M$,
and provides useful relations to more familiar function spaces.

\begin{lemma}
\label{lem:embeddings}
For any $\beta' \in (\beta, 1]$, we have the following sequence of continuous embeddings,
\[
C^1(M) \hookrightarrow C^{\beta'}(\cW^u) \hookrightarrow \cB \hookrightarrow \cB_w \hookrightarrow (C^1(\cW^s))^*.
\]
All the embeddings are injective except\footnote{Note, however, that the embedding $C^1(M) \hookrightarrow \cB$ is
injective.} $C^{\beta'}(\cW^u) \hookrightarrow \cB$.
Moreover, the embedding $\cB \hookrightarrow \cB_w$ is relatively compact.
\end{lemma}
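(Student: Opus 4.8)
The plan is to verify the four continuous embeddings in turn, recording where injectivity holds and where it fails, and then to prove the relative compactness of $\cB\hookrightarrow\cB_w$ by an Arzel\`a--Ascoli argument. The three left-hand embeddings are norm comparisons on $C^1(\cW^u)$ that extend to the relevant completions. Restricting $f\in C^1(M)$ to a horizontal leaf gives a Lipschitz function, so $|f|_{C^{\beta'}(\cW^u)}\le 2|f|_{C^1(M)}$; this embedding is injective because the $C^{\beta'}(\cW^u)$-norm dominates the sup norm. For $C^{\beta'}(\cW^u)\hookrightarrow\cB$, bounding $\int_W f\,\vf\,dm_W\le|f|_{C^0}|\vf|_{C^0(W)}\le|f|_{C^0}$ gives $\|f\|_s\le|f|_{C^0}$, while for $\|f\|_u$ one uses that $d_0(\vf_1,\vf_2)=0$ forces $\vf_1,\vf_2$ to be the same function of the vertical coordinate, so the difference of the two leaf integrals equals $\int_0^1(f(s_{W_1},t)-f(s_{W_2},t))\psi(t)\,dt$, which is at most $d(W_1,W_2)^{\beta'}|f|_{C^{\beta'}(\cW^u)}\le d(W_1,W_2)^{\beta}|f|_{C^{\beta'}(\cW^u)}$ since $\beta<\beta'$ and $d(W_1,W_2)\le1$. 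This embedding is \emph{not} injective: a function supported on a single (measure-zero) unstable leaf --- say $f$ equal to $\bo$ on $\{t=t_0\}$ and $0$ elsewhere --- is a nonzero element of $C^{\beta'}(\cW^u)$, yet every $W\in\cW^s$ meets its support in one point, so $\|f\|_s=\|f\|_u=0$. (By contrast $C^1(M)\hookrightarrow\cB$ \emph{is} injective, since $\|f\|_\cB=0$ forces all leaf integrals of $f$ to vanish, whence $f\equiv0$ by continuity.) Finally $|\vf|_{C^\alpha(W)}\le|\vf|_{C^1(W)}$ on a unit-length leaf, so $|f|_w\le\|f\|_s$, giving $\cB\hookrightarrow\cB_w$.

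Injectivity of $\cB\hookrightarrow\cB_w$ is the point at which the definition of $C^\alpha(W)$ as the completion of $C^1(W)$ is used. For $f\in\cB$, the leaf pairings $\vf\mapsto\int_W f\,\vf\,dm_W$ ($\vf\in C^\alpha(W)$) extend continuously from $C^1(\cW^u)$, being dominated by $\|\cdot\|_s$, and one recovers $\|f\|_s$ and $\|f\|_u$ as the appropriate suprema of these pairings, uniformly enough that if all the pairings vanish then so do the norms. Now if $|f|_w=0$ the pairings vanish for $\vf\in C^1(W)$; for general $\vf\in C^\alpha(W)$ we approximate $\vf$ by $\vf_k\in C^1(W)$ in the $C^\alpha(W)$-norm, rescale each $\vf_k$ into the $C^1(W)$ unit ball to conclude $\int_W f\,\vf_k\,dm_W=0$, and pass to the limit using $\|f\|_s<\infty$. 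Hence $\|f\|_s=\|f\|_u=0$, i.e.\ $f=0$ in $\cB$. For the last embedding, Fubini identifies the action of $f\in C^1(\cW^u)$ on $\vf\in C^1(\cW^s)$ with $\int_0^1\big(\int_{W_s}f\,\vf\,dm_{W_s}\big)\,ds$, bounded by $|f|_w|\vf|_{C^1(\cW^s)}$, and this identification extends to $\cB_w\hookrightarrow(C^1(\cW^s))^*$. Its injectivity follows by mollifying in the stable-leaf parameter: if $f$ induces the zero distribution, testing against $\vf(s,t)=\psi(t)\rho_\eta(s-s_W)$ --- which lies in $C^1(\cW^s)$ for each fixed $W$, each $\psi\in C^1$, and each approximate identity $\rho_\eta$ --- and letting $\eta\to0$ isolates $\int_W f\,\psi\,dm_W=0$, using that $s\mapsto\int_{W_s}f\,\psi\,dm_{W_s}$ is continuous (a uniform limit of continuous functions) and that $|f|_w$ is recovered as the supremum over $W$ and $\psi$ of these leaf pairings.

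It remains to show that $\{f\in\cB:\|f\|_\cB\le1\}$ is relatively compact in $\cB_w$, which is the main obstacle. Let $B$ denote the unit ball of $C^1([0,1])$ (Lipschitz functions); as already noted in Section~\ref{ped}, the $C^1$ unit ball is compactly embedded in $C^\alpha$, so $B$ is a compact metric space in the $C^\alpha$-metric, and hence $K:=[0,1]\times B$ is compact. Identifying each $W\in\cW^s$ with its coordinate $s\in[0,1]$ and each test function on a leaf with a function of the vertical coordinate, define $J:\cB_w\to C(K)$ by $J(f)(s,\psi)=\int_{W_s}f\,\psi\,dm_{W_s}$. For $f\in C^1(\cW^u)$ this is continuous on $K$ --- Lipschitz in $\psi$ for the $C^\alpha$-metric with constant $\|f\|_s$, and H\"older-$\beta$ in $s$ with constant $\|f\|_u$, the latter straight from the definition of $\|\cdot\|_u$ because keeping $\psi$ fixed means $d_0=0$ --- and $\|J(f)\|_{C(K)}=|f|_w$, so $J$ extends to a linear isometry $\cB_w\hookrightarrow C(K)$. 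The same two estimates, now with both constants at most $1$, show that $J$ carries the unit ball of $\cB$ into a uniformly bounded, uniformly equicontinuous subset of $C(K)$, which is relatively compact by Arzel\`a--Ascoli; since $J$ is an isometry onto a closed subspace, the unit ball of $\cB$ is relatively compact in $\cB_w$. I expect this last step to be the crux: the embedding chain and its injectivity statements are comparatively routine once the role of the completion-based definition of $C^\alpha(W)$ and of the Fubini and mollification identifications is recognized, whereas the compactness requires choosing the right compact parameter space and organizing the strong stable and strong unstable norms as its two equicontinuity moduli.
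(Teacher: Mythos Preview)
Your proof is correct, and in two places it is organized differently from the paper's.

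For the inclusion $C^{\beta'}(\cW^u)\hookrightarrow\cB$ you exploit that both spaces are completions of $C^1(\cW^u)$: the norm bound $\|f\|_\cB\le 2|f|_{C^{\beta'}(\cW^u)}$ on $C^1(\cW^u)$ automatically extends to a bounded map between the completions. The paper instead constructs an explicit mollifier $g_\ve$ along unstable leaves and proves $\|g_\ve-f\|_\cB\le C\ve^{\beta'-\beta}|f|_{C^{\beta'}(\cW^u)}$. Your route is shorter and uses nothing beyond the definitions; the paper's route is more concrete and along the way yields the sharper bound $\|g\|_\cB\le|g|_{C^\beta(\cW^u)}$ (exponent $\beta$, not $\beta'$), which it reuses later.

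For relative compactness you package the argument as Arzel\`a--Ascoli on the compact space $K=[0,1]\times B$, with $\|\cdot\|_s$ providing the modulus in the $\psi$-variable and $\|\cdot\|_u$ the modulus in the $s$-variable, and $J$ an isometry from $\cB_w$ into $C(K)$. The paper's Lemma~\ref{lem:compact} carries out essentially the same idea by hand: it chooses finite $\ve$-covers of the test-function ball (in $C^\alpha$) and of $\cW^s$, and reduces to a finite family of linear functionals $\ell_{i,j}$. Your formulation is cleaner and makes transparent why the two components of the strong norm are exactly what is needed; the paper's is more elementary in that it avoids invoking Arzel\`a--Ascoli on a function space whose domain is itself a function space. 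The injectivity arguments for $\cB\hookrightarrow\cB_w$ and $\cB_w\hookrightarrow(C^1(\cW^s))^*$ match the paper's in spirit (density of $C^1$ in $C^\alpha$, and continuity of $s\mapsto\int_{W_s}f\psi$), with your mollification in $s$ replacing the paper's indicator-of-a-neighborhood argument.
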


\noindent
The proof of Lemma~\ref{lem:embeddings} appears in Section~\ref{props}.

Choose $\kappa \ge 2$ and $\lambda \in (0,\kappa^{-1}]$, and fix a Baker's map $T_{\kappa, \lambda}$.
The main theorem characterizing the spectrum of $\Lp  = \Lp_{\kappa, \lambda}$ on $\cB$ is the following.

\begin{theorem}
\label{thm:spec}
$\Lp$ is quasi-compact as an operator on $\cB$:  Its spectral radius is 1 and its essential
spectral radius is bounded by $\rho = \max \{ \lambda^\alpha, \kappa^{-\beta} \} < 1$.
Moreover, 
\begin{itemize}
  \item[a)]  $\Lp$ has a spectral gap:  the eigenvalue 1 is simple, and all other eigenvalues have
modulus strictly less than 1.
  \item[b)]  The unique element $\bmu$ of $\cB$ such that $\Lp \bmu = \bmu$ with $\bmu(\bo) =1$ is a measure.
It is the unique physical measure\footnote{An invariant measure $\mu$ is said to be a physical measure for $T$ if there
exists a positive Lebesgue measure set $B \subseteq M$ such that 
$\frac 1n \sum_{k=0}^{n-1} \psi \circ T^k(x)  \to \mu(\psi)$ for all $x \in B$ and all $\psi \in C^0(M)$.} for the system and its conditional measures on unstable leaves are equal to arclength.\footnote{This
implies in particular that $\mu_0$ is the unique SRB measure for the system.  See \cite{young SRB} for a discussion of
the importance of such measures.}
  \item[c)] There exists $\sigma < 1$ and $C>0$ such that for any $f \in \cB$
satisfying\footnote{Here $f(\bo)$ denotes the distribution $f$ applied to the constant
function $\bo$.} $f(\bo) = 1$, we have
\[
\| \Lp^n f - \bmu \|_{\cB} \le C \sigma^n \| f \|_{\cB} \, , \qquad \mbox{for all $n \ge 0$.}
\]
\end{itemize}
\end{theorem}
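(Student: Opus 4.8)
The plan is to realise the abstract scheme (i)--(vi) stated at the beginning of this section, treating the Baker's map one stable leaf at a time exactly as in the contracting example of Section~\ref{ped}, and then gluing the leaves together through the strong unstable norm.

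First I would prove the Lasota--Yorke inequalities of Proposition~\ref{prop:ly}: for a suitable $C>0$ and all $n\ge 0$,
\[
\|\Lp^n f\|_s \le C\lambda^{\alpha n}\|f\|_s + C|f|_w, \qquad \|\Lp^n f\|_u \le C\kappa^{-\beta n}\|f\|_{\cB} + C|f|_w, \qquad |\Lp^n f|_w \le C|f|_w,
\]
so that $\|\Lp^n f\|_{\cB}\le C\rho^n\|f\|_{\cB}+C|f|_w$ with $\rho=\max\{\lambda^\alpha,\kappa^{-\beta}\}$. The weak and strong-stable bounds are the one-leaf computation \eqref{eq:avg subtract}--\eqref{eq:ly 2}: since $T$ maps vertical lines into vertical lines and $T^{-n}W$ is a union of stable leaves on each of which $T^n$ contracts distances by $\lambda^n$ with constant Jacobian, pulling a test function $\vf$ back along stable leaves leaves $|\vf|_{C^0}$ unchanged and shrinks its $\alpha$-Hölder seminorm by $\lambda^{\alpha n}$; subtracting the leafwise average as in \eqref{eq:contract ly} absorbs the low-regularity remainder into $|f|_w$. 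The strong-unstable bound compares $\Lp^n f$ paired against matched test functions on two nearby leaves $W_1,W_2$: one decomposes each $T^{-n}W_i$ into its constituent stable leaves, pairs them off bijectively (the affine structure makes the matching distortion-free; this is where the bounded-distortion hypothesis of Remark~\ref{rem:dist} would enter for nonlinear $T$), estimates each matched pair by $\|f\|_u$ using that horizontal distances contract by $\kappa^{-n}$ under $T^{-n}$, and controls the finitely many unmatched leaves near the boundaries of the $R_i$ by $\|f\|_s$ times $\kappa^{-\beta n}$. Granted these inequalities, the compact embedding $\cB\hookrightarrow\cB_w$ of Lemma~\ref{lem:embeddings}, Hennion's theorem \cite{hennion spec} and the Nussbaum formula \cite{nussbaum} give quasi-compactness with essential spectral radius at most $\rho$; iterating the inequalities shows $\|\Lp^n\|_{\cB}\le C$, so the spectral radius is $\le 1$, while $(\Lp^n\bo)(\bo)=m(\bo)=1$ for all $n$ (using a Fubini estimate over stable leaves to get $|f(\bo)|\le|f|_w$, together with $\Lp^*m=m$) rules out $\Lp^n\to 0$, so the spectral radius is exactly $1$ and $1$ belongs to the spectrum.

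Next comes the Perron--Frobenius step (vi). Because $\|\Lp^n\|_{\cB}\le C$, no peripheral eigenvalue carries a Jordan block, so $\Lp=\sum_{j=0}^N e^{2\pi i\theta_j}\Pi_j+R$ with commuting idempotents $\Pi_j$, $\theta_0=0$, and $\|R^n\|_{\cB}\le Cr^n$ for some $r<1$. For $f\in\bV_j:=\Pi_j\cB$ and any $\vf\in C^1(\cW^s)$, the identity $f(\vf)=e^{-2\pi i n\theta_j}f(\vf\circ T^n)$, together with the leafwise bound $|\vf\circ T^n|_{C^\alpha(\cW^s)}\le|\vf|_{C^0}+\lambda^{\alpha n}|\vf|_{C^\alpha(\cW^s)}$ and $|f(\psi)|\le\|f\|_s|\psi|_{C^\alpha(\cW^s)}$, gives in the limit $n\to\infty$ that $|f(\vf)|\le\|f\|_s|\vf|_{C^0(M)}$; since $C^1(M)\subset C^1(\cW^s)$ is sup-dense in $C^0(M)$, each $f\in\bV_j$ is a complex Radon measure on $M$ --- the hyperbolic analogue of the passage to measures in Section~\ref{ped}. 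The Cesàro means $\nu_N=\tfrac1N\sum_{n=0}^{N-1}\Lp^n\bo$ converge in $\cB$ to $\bmu:=\Pi_0\bo$, and each $\nu_N$ is a probability density equal to $(\kappa\lambda)^{-n}$ on $T^nM$, hence constant on every unstable leaf it charges; so $\bmu$ is a $T$-invariant Borel probability measure with $\bmu(\bo)=1$ whose conditional measures on unstable leaves are normalised arclength, i.e.\ an SRB measure for $T$. To collapse the peripheral spectrum onto $\{1\}$ I would argue that a general $f\in\bV_j$, being a complex measure with $\|f\|_u<\infty$, has (via its real and imaginary parts and the positivity of $\Lp$ with $\Lp^*m=m$) total variation equal to a $T$-invariant positive measure, which finiteness of the strong unstable norm forces to be absolutely continuous with arclength conditionals on unstable leaves; since $T$ restricted to its attractor $\Lambda=\bigcap_n T^nM$ is conjugate to a full shift on $\kappa$ symbols, hence topologically mixing with a unique ergodic SRB measure, namely $\bmu$, we obtain $N=0$, $\bmu$ ergodic, and $\dim\bV_0=1$. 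This is the step I expect to be the genuine obstacle: turning "peripheral eigendistribution" into "invariant measure with good unstable conditionals", and then invoking mixing of the base to identify it; the Lasota--Yorke estimates, though the most computation-heavy ingredient, are routine by comparison.

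Finally I would assemble the three assertions. Quasi-compactness together with $N=0$, $\dim\bV_0=1$ and the absence of Jordan blocks yields (a), with $1$ simple and the remaining spectrum inside a disc of radius $\sigma<1$. The $\Lp^*$-invariant functional dual to the simple eigenvalue $1$ must be $f\mapsto f(\bo)$ (it is continuous on $\cB$, $\Lp^*$-invariant since $\bo\circ T=\bo$, and pairs to $1$ with $\bmu$), so $\Pi_0 f=f(\bo)\bmu$, and for $f(\bo)=1$,
\[
\|\Lp^n f-\bmu\|_{\cB}=\|\Lp^n(\Id-\Pi_0)f\|_{\cB}=\|R^n(\Id-\Pi_0)f\|_{\cB}\le C\sigma^n\|f\|_{\cB},
\]
which is (c). For (b): $\bmu$ is the measure constructed above, with arclength unstable conditionals, and is ergodic, so Birkhoff's theorem gives $\tfrac1n\sum_{k<n}\psi\circ T^k\to\bmu(\psi)$ $\bmu$-almost everywhere; since any two points on a common vertical stable leaf satisfy $d(T^nx,T^ny)\le\lambda^n\to 0$ (the stable holonomy for this map is trivial), this convergence propagates along stable leaves to a full Lebesgue-measure set of $x\in M$, so $\bmu$ is the unique physical, and the unique SRB, measure.
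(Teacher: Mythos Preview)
Your overall architecture is right, and parts (c) and the measure/Radon step for peripheral eigenvectors match the paper closely. Two points deserve correction, and your approach to collapsing the peripheral spectrum differs from the paper's.

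\textbf{Lasota--Yorke for $\|\cdot\|_u$.} In this Baker's map there are \emph{no} unmatched leaves: because $T^{-1}$ acts as a global horizontal contraction by $\kappa^{-1}$ followed by a vertical stacking, the preimage leaves $\cG_n(W^1)$ and $\cG_n(W^2)$ are in perfect one-to-one correspondence for any $W^1,W^2\in\cW^s$. The paper therefore obtains the clean inequality $\|\Lp^n f\|_u\le\kappa^{-\beta n}\|f\|_u$ with no weak-norm remainder. Your version, with a $C|f|_w$ term coming from ``unmatched leaves near the boundaries of the $R_i$'', is harmless for quasi-compactness but fatal for the next step: with that extra term you cannot conclude $\|f\|_u=0$ for peripheral eigenvectors, which is exactly what you need.

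\textbf{Peripheral spectrum.} Your key claim --- that \emph{finiteness} of $\|f\|_u$ forces arclength unstable conditionals on the total variation --- is false as stated (take $f(s,t)=g(s)\in C^1(\cW^u)$ with $g$ nonconstant). What is true, and what the clean unstable inequality gives, is that any peripheral eigenvector satisfies $\|f\|_u=0$; \emph{that} forces the leafwise distributions $D^1_W(f)$ to be independent of $W$, hence arclength unstable conditionals. With this fix your route (total variation is an invariant SRB-type measure, uniqueness via the full-$\kappa$-shift coding of the attractor, hence $\bV_0$ one-dimensional, then mixing of $(T,\bmu)$ kills the other $\theta_j$) can be made to work, though you should also say why $|f|$ lies in the class where uniqueness applies, since $|f|\in\cB$ is not obvious.

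\textbf{Comparison.} The paper does not use the shift coding. Instead it shows (i) every peripheral eigenvector is a measure absolutely continuous with respect to $\bmu$ with bounded density $f_\mu$; (ii) $f_\mu\circ T^{-1}=e^{2\pi i\theta_j}f_\mu$, so powers of $f_\mu$ generate further peripheral eigenvalues, forcing $\theta_j\in\mathbb{Q}$; (iii) a saturation argument along unstable and then stable leaves shows $(T,\bmu)$ has a single ergodic component, so $\dim\bV_0=1$; (iv) since $T^q$ is again a Baker's map of the same type, applying (iii) to $T^q$ rules out any $\theta_j\ne 0$. For the arclength conditionals on $\bmu$, the paper uses $\|\bmu\|_u=0$ together with the observation that $\Lp$ preserves the cone of functions constant on unstable leaves. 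Your full-shift argument is more direct for this specific map; the paper's route is the one that generalises to nonlinear and piecewise hyperbolic systems where a global symbolic model is unavailable.
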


\noindent
Theorem~\ref{thm:spec} is proved in Section~\ref{periph}.

\subsubsection{Limit theorems}
\label{limit theorems}

The power of the present method is demonstrated by the fact that once
a spectral gap for $\Lp$ has been established, it in turn implies many limit theorems and statistical properties for $T$ 
using standard arguments. 
These limit theorems have been the subject of many recent studies via a variety of techniques,
including the use of Young towers
\cite{hennion, melbourne nicol, chazottes, rey bellet, gouezel}.  The present framework, however,
provides strengthened results in some cases, since it allows one to derive limit theorems with respect to
non-invariant measures as well as invariant ones.

We state several limit theorems here and show how they follow from our functional
analytic framework in Section~\ref{limit proofs}.

Throughout this section, $g$ denotes a real-valued function in $C^1(M)$,
 and
$S_ng = \sum_{j=0}^{n-1} g \circ T^j$.

\medskip
\noindent
{\em Large deviation estimates.}
Large deviation estimates typically take the form
\[
\lim_{\ve \to 0} \lim_{n \to \infty} \frac 1n \log \bmu \left( x \in M : \frac 1n S_ng(x) \in [t - \ve, t+\ve] \right) = - I(t),
\]
where $I(t) \ge 0$ is called the rate function.  When $I(t)>0$, such estimates provide exponential bounds on the rate of convergence
of $\frac 1n S_n g$ to $\bmu(g)$.  More generally, one can ask about the above limit
when $\bmu$ is replaced by a non-invariant measure $\nu$, for example Lebesgue measure or some other reference measure.
In the present context, we prove a large deviation estimate for all
probability measures $\nu \in \cB$ with the same rate function $I$.

\medskip
\noindent
{\em Central Limit Theorem.}
Assume $\bmu(g)=0$ and let $(g \circ T^j)_{j\in \mathbb{N}}$ be 
a sequence of  random variables on the probability space $(M, \nu)$, where $\nu$ is
a (not necessarily invariant) probability measure on the Borel $\sigma$-algebra.
We say that the triple $(g, T, \nu)$ satisfies a Central Limit Theorem if there exists
a constant $\varsigma^2 \geq 0$ such that
\[
\frac{S_ng}{\sqrt{n}} \; \; \stackrel{\mbox{dist.}}{\longrightarrow} \; \; \mathcal{N}(0, \varsigma^2),
\]
where $\mathcal{N}(0, \varsigma^2)$ denotes the normal distribution with mean 0 and
variance $\varsigma^2$.

\medskip
\noindent
{\em Almost-sure Invariance Principle.}
Assume again that $\bmu(g)=0$ and as above distribute $(g \circ T^j)_{j\in \mathbb{N}}$ according
to a probability measure $\nu$.
Suppose there exists $\ve >0$, a probability space $\Omega$ with a sequence of random variables $\{ X_n \}$
satisfying $S_ng \stackrel{\mbox{dist.}}{=} X_n$ and a
Brownian motion $W$ with variance $\varsigma^2 \ge 0$ such that
\[
X_n = W(n) + \mathcal{O}(n^{1/2 -\ve}) \; \; \; \mbox{as $n \to \infty$ almost-surely in } \Omega .
\]
Then we say that the process $(g \circ T^j)_{j \in \mathbb{N}}$ satisfies an almost-sure invariance principle.

\begin{theorem}
\label{thm:limit}
Let $T$ be a Baker's map as above.
If\,\footnote{If one wishes to prove limit theorems for less regular $g \in C^\gamma(M)$ for $\gamma < 1$, then one merely 
needs to reduce the regularity of the test functions used in definition of the norms.  Choosing $\alpha < \gamma < 1$
and requiring $\vf \in C^\alpha(W)$ in $\| \cdot \|_s$ and $\vf \in C^\gamma(W)$ in $\| \cdot \|_u$ and $| \cdot |_w$ would
suffice, for example.  This is done in \cite{dz1} and more generally in \cite{dz3} for observables with singularities. }
  $g \in C^1(M)$, then
\begin{enumerate}
  \item[(a)]  $g$ satisfies a large deviation estimate with the same rate function $I$ for
  all (not necessarily invariant) probability measures $\nu \in \cB$.
\end{enumerate}
Assume that $\bmu(g)=0$, let $\nu \in \cB$ be a probability measure and distribute
$(g \circ T^j)_{j \in \mathbb{N}}$ according to $\nu$.  Then,
\begin{enumerate}
  \item[(b)]  $(g, T, \nu)$ satisfies the Central Limit Theorem;
  \item[(c)]  the process $(g \circ T^j)_{j \in \mathbb{N}}$ satisfies an almost-sure invariance principle.
\end{enumerate}
\end{theorem}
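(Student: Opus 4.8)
The plan is to deduce all three statements from the spectral gap of Theorem~\ref{thm:spec} via the Nagaev--Guivarc'h spectral method, applied to a family of twisted transfer operators. For a real-valued $g \in C^1(M)$ and $z \in \bC$, I would set $\Lp_z f = \Lp(e^{zg} f)$ and first record the algebraic identities $\Lp_z^n f = \Lp^n(e^{z S_n g}\, f)$ and $(\Lp_z^n f)(\bo) = f(e^{z S_n g})$, both consequences of $\Lp((\psi \circ T)\, h) = \psi\, \Lp h$. Since $g \circ T^j$ restricted to any stable leaf $W \in \cW^s$ has Lipschitz constant at most $\lambda^j |g|_{C^1}$, the function $e^{z S_n g}$ lies in $C^1(\cW^s)$ with norm bounded uniformly in $n$ on bounded sets of $z$; hence for any probability measure $\nu \in \cB$ one has $\bE_\nu(e^{z S_n g}) = \int_M e^{z S_n g}\, d\nu = (\Lp_z^n \nu)(\bo)$, and the problem reduces to the asymptotics of $\Lp_z^n$ as an operator on $\cB$.

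The key structural input is that $z \mapsto \Lp_z$ is an analytic family in $L(\cB)$. This follows because $z \mapsto e^{zg}$ is analytic into $C^1(M)$, and multiplication by a $C^1(M)$ function is bounded on $\cB$: the embedding $C^1(M) \hookrightarrow C^{\beta'}(\cW^u) \hookrightarrow \cB$ of Lemma~\ref{lem:embeddings} together with the multiplication estimate of Lemma~\ref{lem:mult} gives $\Lp_z = \Lp \circ M_{e^{zg}}$ bounded and analytic, with $\Lp_0 = \Lp$. Since the spectral gap of $\Lp$ (eigenvalue $1$ simple, the rest of the spectrum inside a disk of radius $\sigma<1$) persists under analytic perturbation, there is a complex neighborhood $U$ of $0$ on which $\Lp_z = \lambda(z)\, \Pi_z + R_z$ with $\lambda$ analytic, $\lambda(0)=1$, $\Pi_z$ a rank-one analytic projection, $\Pi_z R_z = R_z \Pi_z = 0$, and $\|R_z^n\|_{\cB} \le C \sigma_0^n$ uniformly on $U$ for some $\sigma_0 < 1$. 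I would then record two elementary facts. First, the left eigenvector of $\Lp$ at $1$ is $f \mapsto f(\bo)$ (as $(\Lp f)(\bo) = f(\bo \circ T) = f(\bo)$) and the right eigenvector is $\bmu$ with $\bmu(\bo)=1$, so $\Pi_0 f = f(\bo)\,\bmu$; hence $(\Pi_0 \nu)(\bo) = \nu(\bo) = 1$ and, by continuity, $z \mapsto (\Pi_z \nu)(\bo)$ is analytic and nonvanishing near $0$ for every probability $\nu \in \cB$. Second, differentiating $(\Lp_z \Pi_z f)(\bo) = \lambda(z)(\Pi_z f)(\bo)$ at $z=0$ and using $\Lp_0^* \bo = \bo$ gives $\lambda'(0) = (\Lp(g\bmu))(\bo) = \bmu(g) = 0$, and a second differentiation identifies $\varsigma^2 := \lambda''(0) = \bmu(g^2) + 2\sum_{k \ge 1} \bmu(g \cdot g \circ T^k) \ge 0$, the Green--Kubo variance, which vanishes exactly when $g$ is a measurable coboundary.

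For (a), fix a real $z \in U$: then $\tfrac1n \log \bE_\nu(e^{z S_n g}) = \log\lambda(z) + \tfrac1n \log\!\left( (\Pi_z\nu)(\bo) + O(\sigma_0^n/\lambda(z)^n) \right) \to \Lambda(z) := \log\lambda(z)$, a limit independent of $\nu$. Since $\Lambda$ is real-analytic and convex near $0$ with $\Lambda'(0) = \bmu(g)$, the G\"artner--Ellis theorem gives the large deviation estimate with rate function $I(t) = \sup_z(zt - \Lambda(z))$, the same for all probability measures $\nu \in \cB$, for $t$ in a neighborhood of $\bmu(g)$. For (b), set $z = it/\sqrt n$: the remainder $(R_{it/\sqrt n}^n \nu)(\bo) = O(\sigma_0^n) \to 0$, the factor $(\Pi_{it/\sqrt n}\nu)(\bo) \to 1$, and $\lambda(it/\sqrt n)^n = \exp( n\log\lambda(it/\sqrt n)) = \exp( -\tfrac{\varsigma^2 t^2}{2} + o(1))$ because $\lambda'(0)=0$; so $\bE_\nu(e^{it S_n g/\sqrt n}) \to e^{-\varsigma^2 t^2/2}$ and the L\'evy continuity theorem yields $S_n g/\sqrt n \Rightarrow \mathcal N(0,\varsigma^2)$. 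For (c), the uniform spectral gap of $\{\Lp_z\}_{z\in U}$ is precisely the hypothesis of the spectral ASIP criterion of Gou\"ezel (equivalently the Cuny--Merlev\`ede martingale--coboundary method); since the joint characteristic functions of the blocks $S_n g$ under $\nu$ are governed by $(\Lp_z^n \nu)(\bo)$, that criterion delivers the almost-sure invariance principle with the same variance $\varsigma^2$.

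I expect the genuine work to sit upstream rather than in the argument above: the main obstacle is Lemma~\ref{lem:mult}, i.e.\ showing that $M_{e^{zg}}$ is bounded on $(\cB,\|\cdot\|_{\cB})$ with analytic dependence on $z$ — this is where the constraint $\beta \le 1-\alpha$ is consumed, and where one checks that multiplying an admissible test function by a $C^1$ function keeps it admissible in both the strong stable and strong unstable norms. The only other non-mechanical points are the identifications $\lambda'(0) = \bmu(g)$ and $\lambda''(0) = \varsigma^2$ (the Green--Kubo sum) and the treatment of the degenerate case $\varsigma^2 = 0$; everything downstream is the standard Nagaev--Guivarc'h / G\"artner--Ellis package.
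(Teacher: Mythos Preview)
Your approach is essentially identical to the paper's: establish analyticity of $z\mapsto \Lp_{zg}$ on $\cB$ via the multiplier Lemma~\ref{lem:mult}, then invoke the Nagaev--Guivarc'h spectral package (perturbed eigenvalue $\lambda(z)$, G\"artner--Ellis for (a), L\'evy continuity for (b), Gou\"ezel's criterion for (c)). The paper's own proof is in fact terser than yours --- it proves Lemma~\ref{lem:analytic} (your analyticity step) and then simply refers to \cite[Section~6]{dz1} for the rest --- so you have correctly identified both the structure and the location of the real work.
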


\noindent
The proof of this theorem is contained in Section~\ref{limit proofs}.

Although we have stated Theorem~\ref{thm:limit} in the present context of the Baker's map $T_{\kappa, \lambda}$,
it holds under quite general assumptions on the Banach spaces and transfer operator $\Lp$, as can be seen from the proof
in Section~\ref{limit proofs}.  The essential ingredients we use are:
\begin{itemize}
  \item[(i)]  $\Lp$ has a spectral gap on $\cB$;
  \item[(ii)] the perturbed operator $\Lp_{zg} f (\psi) = f(e^{zg} \cdot \psi \circ T)$ is a continuous perturbation of $\Lp$ in the
  $\cB$-norm for $z \in \mathbb{C}$ close to 0.
\end{itemize}
Indeed, these properties are stronger than what is really needed, especially for items (b) and (c) of the theorem,
where a spectral gap is not needed.
See \cite{gouezel} for a more general approach with weaker assumptions.  Property (ii) can be verified by proving
a type of bounded multiplier lemma such as Lemma~\ref{lem:mult}, which is then applied as in the proof of
Lemma~\ref{lem:analytic}.  Thus the observable $g$ need not be smooth as long as it has this type of bounded multiplier
property in $\cB$.

Once (i) and (ii) are known, part (a) of the theorem follows taking $z$ real, while parts (b) and (c) follow taking $z$ purely imaginary.



\subsection{The singular limit $\lambda \to 0$}
\label{sec:sing}

We establish a formal connection between a family of Baker's maps $T_{\kappa, \lambda}$ and the 
singular transformation obtained in the limit $\lambda \to 0$ by showing that the spectra and spectral projectors of the
associated transfer operators vary continuously in the $\cB_w$ norm.  
This result may be of interest to the study of so-called slap maps derived from polygonal billiards,
which consider similar degenerate limits of the associated piecewise hyperbolic maps \cite{magno1, magno2}.

To make this precise, fix $\kappa \ge 2$ and choose $\kappa$ distinct unstable leaves $U_i \in \cW^u$, $i = 1, \ldots, \kappa$.
Define a family of Baker's transformations $(T_{\kappa, \lambda})_{\lambda}$
such that the $\kappa$ vertical rectangles $R_i$ defined in Section~\ref{bake} on which 
$T_{\kappa, \lambda}$ is smooth
is the same for all $\lambda$, and for each $\lambda$, $T_{\kappa, \lambda}(R_i) \supset U_i$.
With this definition, the set $T_{\kappa, \lambda}(M)$ converges in the Hausdorff metric to $\cup_{i=1}^\kappa U_i$
as $\lambda \to 0$.

Since $\kappa$ is fixed, we will drop the subscript and refer to our family of maps as $(T_\lambda)_{\lambda \in [0, \lambda_0]}$
for some $\lambda_0 \le 1/\kappa$.  When $\lambda = 0$, $T_0$ is the singular transformation which maps $M$ onto 
$\oM_0 := \cup_{i=1}^\kappa U_i$.

Starting from Lebesgue measure $m$, it follows that $(T_0)_*m$ is singular with respect to $m$
and supported on $\oM_0$.  In order to study the evolution of measures,
we recall the notion of standard pairs, which were briefly mentioned
in Remark~\ref{rem:pairs}.

For our purposes, a {\em standard pair} is a measure supported on a single $U \in \cW^u$ together with a smooth density.
Let $\delta_U$ denote the uniform (Lebesgue) measure restricted to $U \in \cW^u$.  
For $f \in C^1(\cW^u)$, we denote
by $f\delta_U$ the standard pair supported on $U \in \cW^u$ with density $f$, i.e.
\[
f\delta_U(\psi) = \int_U \psi \, f \, dm_U, \qquad \mbox{for all } \psi \in C^0(M) ,
\]
where $m_U$ is Lebesgue measure on $U$.
The first key fact allowing us to work with such measures is the following.

\begin{lemma}
\label{lem:stand}
Standard pairs belong to $\cB$.
\end{lemma}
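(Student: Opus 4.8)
The plan is to show that a standard pair $f\delta_U$ has finite strong stable and strong unstable norms, after which membership in $\cB$ follows since $\cB$ is the completion of $C^1(\cW^u)$ under $\|\cdot\|_{\cB}$ and $f\delta_U$ is the weak-star limit (indeed strong-norm limit) of smooth densities supported near $U$. The conceptual point is that a standard pair, viewed as a distribution on $M$, is supported on a single unstable leaf $U$, hence each stable leaf $W\in\cW^s$ meets its support in exactly one point $W\cap U$. Thus for a test function $\vf\in C^\alpha(W)$ the ``integral'' $\int_W f\delta_U\,\vf\,dm_W$ should be interpreted as the value of the pairing of $f\delta_U$ with a test object built from $\vf$; since $f\delta_U(\psi)=\int_U \psi f\,dm_U$ for $\psi\in C^0(M)$, the natural reading is that $f\delta_U$ pairs with functions on $M$, not with functions on individual stable leaves. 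So the first step is to make precise, via the embedding $C^{\beta'}(\cW^u)\hookrightarrow\cB\hookrightarrow(C^1(\cW^s))^*$ of Lemma~\ref{lem:embeddings} and a limiting argument, how $f\delta_U$ is to be tested against elements of $C^\alpha(\cW^s)$ and $C^1(\cW^s)$, presumably by approximating $f\delta_U$ by the densities $f_\ve := \frac{1}{\ve} f\cdot \mathbf{1}_{N_\ve(U)}$ on thin neighborhoods $N_\ve(U)$ of $U$.

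Concretely, I would first bound $\|f_\ve\|_s$ uniformly in $\ve$. Fix $W\in\cW^s$ and $\vf\in C^\alpha(W)$ with $|\vf|_{C^\alpha(W)}\le 1$. Then $\int_W f_\ve\,\vf\,dm_W$ is an average over an arc of length $\ve$ of $f\vf$ near the point $p_W = W\cap U$; since $|f|_{C^0}$ and the variation of $f$ along $U$ are controlled by $|f|_{C^1(\cW^u)}$, and $|\vf|_{C^0(W)}\le 1$, this integral is bounded by a constant depending only on $|f|_{C^1(\cW^u)}$, uniformly in $W$ and $\ve$. Next, for $\|f_\ve\|_u$, take $W_1,W_2\in\cW^s$ with test functions $\vf_i\in C^1(W_i)$, $|\vf_i|_{C^1}\le 1$, $d_0(\vf_1,\vf_2)=0$. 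The difference $\int_{W_1} f_\ve\vf_1\,dm_{W_1} - \int_{W_2} f_\ve\vf_2\,dm_{W_2}$ is, up to the $\ve$-averaging, the difference $f(p_1)\vf_1(p_1) - f(p_2)\vf_2(p_2)$ where $p_i = W_i\cap U$; since $p_1,p_2$ lie on the same unstable leaf $U$ at horizontal distance $d(W_1,W_2)$, and $f$ is Lipschitz along $U$ (so $|f(p_1)-f(p_2)|\le |f|_{C^1(\cW^u)} d(W_1,W_2)$) while $d_0(\vf_1,\vf_2)=0$ forces $\vf_1(p_1)=\vf_2(p_2)$ (same vertical coordinate), the difference is bounded by $|f|_{C^1(\cW^u)} d(W_1,W_2) \le |f|_{C^1(\cW^u)} d(W_1,W_2)^\beta$ since $\beta<1$ and $d\le 1$. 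Hence $\|f_\ve\|_u \le |f|_{C^1(\cW^u)}$, uniformly in $\ve$.

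Having established uniform bounds $\|f_\ve\|_{\cB}\le C|f|_{C^1(\cW^u)}$, the final step is to argue that $f_\ve$ is Cauchy in $\|\cdot\|_{\cB}$ as $\ve\to 0$ (or at least that it converges in $\cB_w$ to $f\delta_U$ and that its $\cB$-norm limit point is $f\delta_U$ as a distribution), so that $f\delta_U\in\cB$; alternatively, one observes that $f_\ve\to f\delta_U$ in $(C^1(\cW^s))^*$ and that the uniform strong-norm bound, combined with the characterization of $\cB$ furnished by Lemma~\ref{lem:embeddings}, places the limit in $\cB$. I expect the main obstacle to be precisely this last bookkeeping step: making the passage from the uniform bound on the approximants $f_\ve$ to genuine membership of the singular object $f\delta_U$ in the abstractly-defined completion $\cB$, i.e. identifying the weak-star limit with an element of $\cB$ and checking its strong and unstable norms agree with the limits of those of $f_\ve$. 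The norm estimates themselves, by contrast, are short once the geometric picture — one stable leaf meets $U$ in one point, and two nearby stable leaves meet $U$ at nearby points of a single unstable leaf — is in place.
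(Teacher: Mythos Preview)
Your setup and approximants are essentially those of the paper, and your uniform bounds $\|f_\ve\|_s \lesssim |f|_{C^0}$ and $\|f_\ve\|_u \lesssim |f|_{C^1(\cW^u)}$ are correct. However, the proof is not complete, and the missing step is not bookkeeping.

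Your option (b) does not work as stated: a uniformly $\cB$-bounded sequence converging in $(C^1(\cW^s))^*$ need not have its limit in $\cB$. Lemma~\ref{lem:compact} gives relative compactness of the unit ball of $\cB$ in $\cB_w$, so you get subsequential limits in $\cB_w$, but nothing forces those limits to lie in $\cB$ rather than in $\cB_w \setminus \cB$. No part of Lemma~\ref{lem:embeddings} closes this gap; $\cB$ is not known to be reflexive, and there is no weak-$*$ lower semicontinuity statement for $\|\cdot\|_{\cB}$ available here.

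So you must carry out option (a), the Cauchy estimate, and this is where the paper's proof does its real work. The key device, which your proposal does not anticipate, is the splitting $\vf = (\vf - \vf_U) + \vf_U$ with $\vf_U := \vf(W \cap U)$ a constant. The paper's approximants $\of_\ve$ are defined by extending $\ve^{-1} f|_U$ \emph{constantly along stable leaves} over $N_{\ve/2}(U)$, so that $\int_W \of_\ve \, dm_W = f(W \cap U)$ \emph{exactly}, independently of $\ve$. Hence the constant piece $\vf_U$ contributes nothing to $\int_W (\of_{\ve_1} - \of_{\ve_2})\vf$. On the support of $\of_\ve$ one has $|\vf - \vf_U| \le (\ve/2)^\alpha H^\alpha_W(\vf)$, which yields $\|\of_{\ve_1} - \of_{\ve_2}\|_s \le C\ve_2^{\alpha}|f|_{C^0(U)}$. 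An interpolation of the same flavour gives $\|\of_{\ve_1} - \of_{\ve_2}\|_u \le C\ve_2^{1-\beta}|f|_{C^1(U)}$. Thus $(\of_\ve)$ is Cauchy in $\cB$, and injectivity of $\cB \hookrightarrow (C^1(\cW^s))^*$ identifies the limit with $f\delta_U$.

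Note also that your specific approximants $f_\ve = \ve^{-1} f \cdot \bo_{N_\ve(U)}$ use the values of $f$ throughout $N_\ve(U)$, so $\int_W f_\ve \, dm_W$ equals $f(W\cap U)$ only approximately, and only if $f$ has some stable regularity (which $C^1(\cW^u)$ does not guarantee). The cancellation above would then fail. Replacing $f$ on $N_\ve(U)$ by the stable-constant extension of $f|_U$, as the paper does, is what makes the Cauchy argument go through cleanly.
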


\noindent
This lemma is proved in Section~\ref{sec:pairs}.

\subsubsection{The transfer operator $\Lp_0$}

Despite the singularity of $T_0$, it still holds as in \eqref{eq:dist def}, that
\[
\Lp_0^n f(\vf) = f(\vf \circ T_0^n), \qquad \mbox{for all $f \in (C^\alpha(\cW^s))^*$ and $\vf \in C^\alpha(\cW^s)$.}
\]

Yet we will find it convenient that $\Lp_0 f$ can be
viewed in the sense of distributions as a limit of $\Lp_\lambda f$ as $\lambda \to 0$, where $\Lp_\lambda$ denotes the transfer operator
associated to $T_\lambda$.  

\begin{lemma}
\label{lem:sing L}
For $f \in C^1(\cW^u)$, we have,
\begin{equation}
\label{eq:L0 def}
\Lp_0 f = \sum_{i=1}^\kappa \kappa^{-1} \of \circ T_0^{-1} \, \delta_{U_i},
\end{equation}
where $\of(x) = \int_{W_x} f \, dm_{W_x}$, and $W_x$ is the element of $\cW^s$ containing $x$. 
\end{lemma}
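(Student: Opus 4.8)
The plan is to unfold the definition of $\Lp_0$ and compute directly. By the analogue of \eqref{eq:dist def} valid for $T_0$ (as noted in the text just above the lemma), $\Lp_0 f(\vf) = f(\vf \circ T_0) = \int_M (\vf \circ T_0)\, f \, dm$ for every $\vf \in C^\alpha(\cW^s)$, so it suffices to show that $\int_M (\vf \circ T_0)\, f\, dm = \sum_{i=1}^\kappa \kappa^{-1} \int_{U_i} \vf \, (\of\circ T_0^{-1}) \, dm_{U_i}$ for all such $\vf$. First I would record two preliminary observations. Since $T_0$ collapses each vertical rectangle $R_i$ onto $U_i$ by an affine map whose value depends only on the horizontal coordinate, the function $\vf\circ T_0$ is \emph{constant along every $W \in \cW^s$}; hence $|\vf\circ T_0|_{C^1(W)} \le |\vf|_{C^0(\cW^s)}$, so $\vf\circ T_0 \in C^1(\cW^s) \subset C^\alpha(\cW^s)$ and the left-hand pairing is legitimate (to be fully rigorous for $\alpha<1$ one approximates $\vf$ by $C^1$ functions and uses that composition with $T_0$ is continuous for the relevant norms). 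Second, although $T_0^{-1}$ is set-valued -- the preimage of a point of $U_i$ is a whole vertical segment in $R_i$ -- the composition $\of\circ T_0^{-1}$ is single-valued because $\of(x) = \int_{W_x} f\,dm_{W_x}$ depends only on the stable leaf $W_x$ through $x$, and the $U_i$ are disjoint; moreover, since $f\in C^1(\cW^u)$ is uniformly Lipschitz along unstable leaves, $\of$ is Lipschitz in the horizontal coordinate, so each $\of\circ T_0^{-1}\,\delta_{U_i}$ is a genuine standard pair and lies in $\cB$ by Lemma~\ref{lem:stand}.

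For the main computation I would decompose $M$ into the rectangles $R_i$ and, on each $R_i$, write Lebesgue measure in the Fubini form $dm = ds\,dm_{W_s}$, where $W_s \in \cW^s$ is the vertical leaf at horizontal coordinate $s$. Pulling the $W_s$-constant factor $\vf\circ T_0$ out of the inner integral gives $\int_{R_i}(\vf\circ T_0)\,f\,dm = \int \vf(T_0(W_s))\,\of(s)\,ds$, where $\of(s) = \int_{W_s} f\,dm_{W_s}$ and $T_0(W_s) \in U_i$ is the common image point. The branch $T_0|_{R_i}$ is affine and stretches the horizontal coordinate by the factor $\kappa$ (up to an orientation reversal that does not affect the measure), so changing variables to the horizontal coordinate $r$ on $U_i$ yields $ds = \kappa^{-1}dr$ with $r$ ranging over $[0,1]$, whence $\int_{R_i}(\vf\circ T_0)\,f\,dm = \kappa^{-1}\int_{U_i}\vf\,(\of\circ T_0^{-1})\,dm_{U_i}$. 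Summing over $i$ and recalling the notation $g\delta_U(\psi) = \int_U \psi\,g\,dm_U$ gives exactly \eqref{eq:L0 def}.

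The argument is essentially bookkeeping, so the "hard part" is only to be careful with conventions: confirming that $\vf\circ T_0$ really is an admissible element of $C^\alpha(\cW^s)$ (which is precisely where the collapsing structure of $T_0$ enters), tracking the Jacobian factor $\kappa^{-1}$ correctly through the orientation-dependent affine branches, and noting that $\of\circ T_0^{-1}$ is well defined despite $T_0^{-1}$ being set-valued. As an alternative one could instead derive the identity by passing to the limit $\lambda\to 0$ in the pointwise formula \eqref{eq:trans def} for $\Lp_\lambda f$ together with Fubini and the fact that the height-$\lambda$ image strips $T_\lambda(R_i)$ shrink onto the $U_i$; but the direct computation above is shorter and also makes transparent why $f\in C^1(\cW^u)$ is the natural hypothesis.
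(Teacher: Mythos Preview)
Your proof is correct and takes a genuinely different route from the paper's. The paper establishes \eqref{eq:L0 def} by working leaf-by-leaf and passing to the limit $\lambda \to 0$: for a fixed $W \in \cW^s$ and $\vf \in C^1(W)$ it computes $\int_{W \cap T_\lambda(R_i)} \Lp_\lambda f\, \vf\, dm_W = \kappa^{-1}\int_{W'} f\, \vf\circ T_\lambda\, dm_{W'}$ (with $W' = T_\lambda^{-1}(W\cap T_\lambda(R_i))$ independent of $\lambda$), then sends $\lambda\to 0$ so that $\vf\circ T_\lambda$ collapses to the constant $\vf(W\cap U_i)$ and the inner integral becomes $\of_{W'}$. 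Summing over $i$ gives the formula. This is precisely the ``alternative'' you sketch in your final paragraph.

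Your direct Fubini computation on all of $M$ is shorter and more elementary, and it makes the structure of the formula transparent without any reference to the family $(T_\lambda)_\lambda$. The paper's approach, on the other hand, simultaneously establishes the leaf-wise distributional convergence $\Lp_\lambda f \to \Lp_0 f$, which is the perspective emphasized in the remark following the lemma (viewing $\Lp_0 f$ on each $W$ as a sum of point masses) and which foreshadows the perturbation-theoretic estimate of Theorem~\ref{thm:conv}. Either argument suffices for the lemma as stated.
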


\begin{proof}
For $W \in \cW^s$ and $i = 1, \ldots, \kappa$, we consider $W \cap T_\lambda(R_i)$.  Recall that $T_\lambda(R_i)$ is a 
horizontal strip of width $\lambda$
containing $U_i$.  For $\vf \in C^1(W)$, we have
\[
\int_{W \cap T_\lambda(R_i)} \Lp_\lambda f \, \vf \, dm_W = \kappa^{-1} \int_{W'} f \, \vf \circ T_\lambda \, dm_{W'},
\]
where $W' = T_\lambda^{-1}(W \cap T_{\lambda}(R_i))$.  Note that by assumption on the family $T_\lambda$, $W'$ is independent
of $\lambda$.  Taking the limit as $\lambda \to 0$ yields,
\[
\lim_{\lambda \to 0} \int_{W \cap T_\lambda(R_i)} \Lp_\lambda f \, \vf \, dm_W 
= \kappa^{-1} \left( \int_{W'} f \, dm_{W'} \right) \vf(W \cap U_i),
\]
where $W' = T_0^{-1}(W \cap U_i)$.
Summing over $i$ proves \eqref{eq:L0 def}.
\end{proof}

\begin{remark}
The lemma shows that $\Lp_0f$ is the sum of standard pairs. Perhaps more importantly, the proof of the 
lemma makes clear how $\Lp_0f$ (and
standard pairs in general) should
be viewed 
as a distribution on each stable leaf $W \in \cW^s$:  On $W$, $\Lp_0f$ acts as a sum of point 
masses with weights $\kappa^{-1} \of \circ T_0^{-1}$.  
\end{remark}

Next, iterating \eqref{eq:L0 def} for $n \ge 1$, we obtain,
\[
\Lp_0^n f = \sum_{i=1}^\kappa \kappa^{-1} \bar g \delta_{U_i}, \quad \mbox{where} \quad
\bar g(x) = \kappa^{-n+1} \sum_{T_0^nW = x} \of_W , \quad \mbox{for $x \in U_i$,}
\]
and the cardinality of $\{W: T_0^nW = x \}$ is $\kappa^{n-1}$ for each $x \in U_i$, $i = 1, \ldots \kappa$.

In Section~\ref{singular}, we extend the characterization of $\Lp_0 f$ (and the meaning of $\of$) to all $f \in \cB$.
With these preliminaries established, we can state the following result, which is proved in Section~\ref{quasi L0}

\begin{theorem}
\label{thm:L0}
$\Lp_0$ is bounded as an operator on $\cB$ and $\cB_w$.  It is quasi-compact as an operator
on $\cB$:  its spectral radius is 1, while its essential spectral radius is at most
$\kappa^{-\beta}$.  

Indeed, $\Lp_0$ has a spectral gap on $\cB$ and its unique invariant
probability measure in $\cB$ is given by $\mu_0 = \kappa^{-1} \sum_{i=1}^\kappa  \delta_{U_i}$.  
As a consequence, there exists $C > 0$ and $\sigma < 1$ such that for any
$f \in \cB$ with $f(\bo) = 1$, we have
\[
\| \Lp_0^n f - \mu_0 \|_{\cB} \le C \sigma^n \|f \|_{\cB} , \qquad \mbox{for all $n \ge 0$.}
\]
\end{theorem}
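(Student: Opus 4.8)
The plan is to mirror the proof of Theorem~\ref{thm:spec}. First I would establish Lasota-Yorke inequalities for $\Lp_0$ on the pair $(\cB, \cB_w)$; then deduce quasi-compactness, with essential spectral radius at most $\kappa^{-\beta}$, from Hennion's argument \cite{hennion spec} (via the Nussbaum formula \cite{nussbaum}) together with the relative compactness of $\cB \hookrightarrow \cB_w$ from Lemma~\ref{lem:embeddings}; check that the spectral radius equals $1$; and finally run a Perron-Frobenius argument to identify the peripheral spectrum. Item (c) would then follow from the resulting spectral decomposition in the usual way.

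For the Lasota-Yorke step, the key input is the description of $\Lp_0 f$ — extended to all $f \in \cB$, together with the meaning of $\of$, as in Section~\ref{singular} — according to which, on each $W \in \cW^s$, $\Lp_0 f$ acts as the finite sum of point masses $\sum_{i=1}^\kappa \kappa^{-1}\of(T_0^{-1}(W \cap U_i))\, \delta_{W \cap U_i}$. Since $\of(x) = \int_{W_x} f\, dm_{W_x}$ is simply $f$ paired with the test function $\bo$ on $W_x$, one has $|\of|_{C^0(M)} \le |f|_w$, and feeding this into the definitions of $\|\cdot\|_s$ and $|\cdot|_w$ gives, for $n \ge 1$, $\|\Lp_0^n f\|_s \le C|f|_w$ and $|\Lp_0^n f|_w \le C|f|_w$. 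For the strong unstable norm I would fix $W_1, W_2 \in \cW^s$ and $\vf_i \in C^1(W_i)$ with $d_0(\vf_1, \vf_2) = 0$: because $W_1 \cap U_i$ and $W_2 \cap U_i$ share the same vertical coordinate, $d_0(\vf_1, \vf_2) = 0$ forces $\vf_1(W_1 \cap U_i) = \vf_2(W_2 \cap U_i)$, so the difference of pairings collapses to differences $\of(T_0^{-1}(W_1 \cap U_i)) - \of(T_0^{-1}(W_2 \cap U_i))$, and the two points $T_0^{-1}(W_k \cap U_i)$ lie on stable leaves at transverse distance $d(W_1, W_2)/\kappa$, since $T_0$ expands the transverse coordinate by $\kappa$. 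Pairing once more with $\bo$ and using the definition of $\|\cdot\|_u$ then yields $\|\Lp_0 f\|_u \le \kappa^{-\beta}\|f\|_u$, hence $\|\Lp_0^n f\|_u \le \kappa^{-\beta n}\|f\|_u$. Altogether $\Lp_0$ is bounded on $\cB$ and $\cB_w$ and obeys $\|\Lp_0^n f\|_{\cB} \le C\kappa^{-\beta n}\|f\|_{\cB} + C|f|_w$, which gives quasi-compactness with essential spectral radius at most $\kappa^{-\beta}$; and since $\Lp_0\bo = \mu_0$ and $\Lp_0\mu_0 = \Lp_0^2\bo = \mu_0$ (immediate from the iterated formula, as $\of_W \equiv 1$ when $f = \bo$), the spectral radius is $1$.

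The heart of the argument is the peripheral spectrum. Power-boundedness of $\Lp_0$ on $\cB$ rules out Jordan blocks there, so it suffices to locate the unit-modulus eigenvalues. Suppose $\Lp_0 f = e^{2\pi i\theta} f$ with $f \in \cB$. Since $T_0^n$ collapses every stable leaf to a point for $n \ge 1$, the function $\vf \circ T_0^n$ is constant along each $W \in \cW^s$, so $|\vf \circ T_0^n|_{C^1(\cW^s)} \le |\vf|_{C^0(M)}$ for $\vf \in C^1(M)$; as $f \in \cB \hookrightarrow (C^1(\cW^s))^*$, this gives $|f(\vf)| = |\Lp_0^n f(\vf)| = |f(\vf \circ T_0^n)| \le C|f|_w |\vf|_{C^0(M)}$, so $f$ extends to a bounded functional on $C^0(M)$, that is, $f$ is a complex Radon measure supported on $\oM_0$ with $(T_0)_* f = e^{2\pi i\theta} f$. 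Moreover finiteness of $\|f\|_u$ forces the disintegration of $f$ along the unstable leaves $U_i$ to be absolutely continuous with H\"older densities; rewritten in terms of these densities, $(T_0)_* f = e^{2\pi i\theta} f$ is exactly the eigenvalue equation for the transfer operator of $T_0|_{\oM_0} : \oM_0 \to \oM_0$ — a topologically mixing, uniformly expanding, piecewise-affine Markov map — acting on a H\"older space. Such an operator has a spectral gap by the classical one-dimensional Lasota-Yorke argument together with the mixing, so its only peripheral eigenvalue is $1$, simple, with constant eigenfunction; hence $e^{2\pi i\theta} = 1$ and $f = c\,\mu_0$. Thus $1$ is a simple eigenvalue of $\Lp_0$ on $\cB$ and the rest of the spectrum lies inside the unit disk. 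Writing $\Lp_0 = \Pi_0 + R$ with $\Pi_0 f = f(\bo)\,\mu_0$ — legitimate since $\Lp_0\mu_0 = \mu_0$, $\mu_0(\bo) = 1$, and $f \mapsto f(\bo)$ is fixed by $\Lp_0^*$ on $\cB$ — and with the spectrum of $R$ contained in $\{z : |z| \le \sigma\}$ for some $\sigma < 1$, I obtain $\|\Lp_0^n f - f(\bo)\mu_0\|_{\cB} = \|R^n f\|_{\cB} \le C\sigma^n \|f\|_{\cB}$, which is item (c).

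The main obstacle I expect is this peripheral-spectrum step, and specifically the rigorous passage from the abstract eigenvalue equation $(T_0)_* f = e^{2\pi i\theta} f$ in $\cB$ to a transfer-operator eigenvalue equation on a H\"older space over $\oM_0$. That passage relies on the characterization (to be developed in Section~\ref{singular}) of those elements of $\cB$ supported on $\oM_0$ as families of H\"older densities along the leaves $U_i$, on the extended meaning of $\of$, and on a careful check of the attendant norm comparisons. If one prefers not to cite the one-dimensional spectral gap as a black box, it can be reproved in place by the same Lasota-Yorke scheme applied directly to the densities, using that the averaging operator $f \mapsto \bar g_n$ in the iterated formula $\bar g_n(x) = \kappa^{-(n-1)}\sum_{T_0^n W = x}\of_W$ genuinely averages over the $\kappa^{n-1}$ preimages.
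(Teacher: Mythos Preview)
Your proposal is correct and follows essentially the same route as the paper: Lasota-Yorke inequalities for $\Lp_0$ (with the crucial gain $\kappa^{-\beta}$ in $\|\cdot\|_u$), Hennion's argument for quasi-compactness, and then reduction of the peripheral spectrum to the classical spectral gap for the transfer operator of the uniformly expanding map $\oT_0 = T_0|_{\oM_0}$ on a H\"older space.

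The one place where the paper is noticeably cleaner is precisely the step you flag as the main obstacle. You propose to show that a peripheral eigenvector $f$ is a Radon measure supported on $\oM_0$, and then argue separately that finiteness of $\|f\|_u$ forces H\"older densities along the $U_i$. The paper bypasses both of these: since $f = e^{-2\pi i\theta}\Lp_0 f$, the eigenvector lies in the \emph{range} of $\Lp_0$, and by the explicit formula \eqref{eq:L0 def} together with Lemma~\ref{lem:avg} (which shows $\of \in C^\beta(\cW^u)$ whenever $f \in \cB$), every element of $\Lp_0(\cB)$ is already a finite sum of standard pairs $\sum_i \kappa^{-1}\,\of\circ T_0^{-1}\,\delta_{U_i}$ with $C^\beta$ densities. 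This immediately identifies $f$ with an element of $C^\beta(\oM_0)$ and hands you the eigenvalue equation for $\overline{\Lp}_0$ without any intermediate Radon-measure or disintegration argument. Your approach would work, but this observation removes the obstacle entirely.
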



\subsubsection{Perturbation theory for $\Lp_0$}

Unfortunately, it does not follow from the set-up above 
that $\Lp_\lambda \to \Lp_0$ in $\cB$.  However, using an
approach introduced by Keller and Liverani \cite{KL}, we adopt a more flexible approach and view the operators
as acting from $\cB$ to $\cB_w$.  From this perspective, $\Lp_0$ can be viewed as a continuous limit of $\Lp_\lambda$,
and the spectral data can be shown to vary H\"older continuously in $\lambda$.

It is worth remarking on the wide use of this approach, which greatly broadens the classes of perturbations of hyperbolic
dynamical systems able to be studied via the transfer operator.  In particular, it has been used extensively in the study
of open systems \cite{liv maume, kell liv 2, ferg poll, DT}, random perturbations \cite{bahsoun}, hitting times and extreme events  
\cite{keller}, computations of linear response \cite{galatolo linear}, and perturbations of dispersing billiards \cite{dz2}.

To implement this idea, define the following norm for an operator $\Lp: \cB \to \cB_w$,
\[
||| \Lp ||| = \sup\{ |\Lp f |_w : \| f \|_{\cB} \le 1 \} .
\] 
The principal innovation of \cite{KL} is that instead of requiring that a family of operators converge in the strong $\| \cdot \|_{\cB}$ norm, we require
only that they converge in the weaker $||| \cdot |||$ norm.  Provided that they obey a uniform set of Lasota-Yorke inequalities of the form
given by Proposition~\ref{prop:ly} and Lemma~\ref{lem:L0 bounded}, this convergence is sufficient to conclude the 
convergence of their spectra and spectral projectors outside the disk of radius $\kappa^{-\beta}$.

\begin{theorem}
\label{thm:conv}
Let $(\Lp_\lambda)_{\lambda \in [0, \lambda_0]}$ be the family of transfer operators associated
with $(T_\lambda)_{\lambda \in [0, \lambda_0]}$ as defined above.  We have
\begin{equation}
\label{eq:close}
||| \Lp_0 - \Lp_\lambda ||| \le \lambda^{1-\alpha} , \quad \forall \, \lambda \in [0, \lambda_0] .
\end{equation}
Thus the spectra and spectral projectors of $\Lp_0$ and $\Lp_\lambda$ vary H\"older continuously
in $\lambda$ outside the disk of
radius $\kappa^{-\beta}$.  In particular, if $\mu_\lambda$ denotes the unique invariant probability 
measure in $\cB$ for $T_\lambda$, then
$| \mu_\lambda - \mu_0 |_w \to 0$ as $\lambda \to 0$, where 
$\mu_0 = \kappa^{-1} \sum_{i=1}^\kappa \delta_{U_i}$.
\end{theorem}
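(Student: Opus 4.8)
The plan is to first establish the quantitative bound \eqref{eq:close} by a direct computation on a single stable leaf, and then to feed it, together with the uniform Lasota-Yorke inequalities and the compact embedding already available, into the abstract perturbation theorem of Keller and Liverani \cite{KL}; the H\"older-continuous dependence of the spectral data, and in particular the convergence of the invariant measures, then come out of that machine.

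For \eqref{eq:close}: since $C^1(\cW^u)$ is dense in $\cB$ and $\Lp_0,\Lp_\lambda$ are bounded, it suffices to treat $f \in C^1(\cW^u)$. Fix $W \in \cW^s$ and a test function $\vf \in C^1(W)$ with $|\vf|_{C^1(W)} \le 1$. Splitting $W$ into the $\kappa$ pieces $W \cap T_\lambda(R_i)$ and performing the change of variables $y \mapsto T_\lambda y$ on each, exactly as in the proof of Lemma~\ref{lem:sing L}, together with \eqref{eq:L0 def}, one obtains
\[
\int_W (\Lp_\lambda f - \Lp_0 f)\,\vf\,dm_W = \sum_{i=1}^\kappa \kappa^{-1} \int_{W_i'} f\cdot\bigl(\vf\circ T_\lambda - \vf(W\cap U_i)\bigr)\,dm_{W_i'},
\]
where each $W_i' = T_\lambda^{-1}(W\cap T_\lambda(R_i)) \in \cW^s$ is independent of $\lambda$. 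The key point is that $T_\lambda$ contracts stable leaves by the factor $\lambda$ and carries $W_i'$ onto the segment $W\cap T_\lambda(R_i)$, which has length $\lambda$ and contains the point $W\cap U_i$; hence $\vf\circ T_\lambda - \vf(W\cap U_i)$ is of order $\lambda$ in $C^0(W_i')$ (it is the oscillation of a $1$-Lipschitz function over a segment of length $\lambda$), and its H\"older-$\alpha$ constant along $W_i'$ is likewise a power of $\lambda$ no smaller than $\lambda^{1-\alpha}$, again because of the contraction by $\lambda$. Thus $|\vf\circ T_\lambda - \vf(W\cap U_i)|_{C^\alpha(W_i')} \le \lambda^{1-\alpha}$ (for $\lambda_0$ suitably small), and pairing against the strong stable norm gives $\bigl|\int_{W_i'} f\cdot(\cdots)\,dm_{W_i'}\bigr| \le \lambda^{1-\alpha}\|f\|_s$. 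Averaging over $i$ and taking the supremum over $W$ and $\vf$ yields $|\Lp_\lambda f - \Lp_0 f|_w \le \lambda^{1-\alpha}\|f\|_{\cB}$, which is \eqref{eq:close}.

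Next I would invoke \cite{KL}. Its hypotheses are now all in place: a family of Lasota-Yorke inequalities with $\lambda$-independent constants and common rate $\rho = \max\{\lambda_0^\alpha,\kappa^{-\beta}\} < 1$ (from Proposition~\ref{prop:ly} for $\lambda \in (0,\lambda_0]$, which dominates the rate $\max\{\lambda^\alpha,\kappa^{-\beta}\}$ for every $\lambda \le \lambda_0$, and from Lemma~\ref{lem:L0 bounded} together with Theorem~\ref{thm:L0} for $\lambda = 0$); the relative compactness of $\cB \hookrightarrow \cB_w$ from Lemma~\ref{lem:embeddings}; and the $|||\cdot|||$-convergence \eqref{eq:close}. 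The Keller-Liverani theorem then gives that, outside any fixed disk of radius $> \kappa^{-\beta}$ (with $\lambda_0$ correspondingly small), the finite part of the spectrum of $\Lp_\lambda$ and the associated spectral projectors converge to those of $\Lp_0$ at a H\"older rate in $|||\Lp_\lambda - \Lp_0||| \le \lambda^{1-\alpha}$, the exponent being governed by the size of the spectral gap of $\Lp_0$ from Theorem~\ref{thm:L0}. Since by Theorem~\ref{thm:spec}(a) the eigenvalue $1$ is already simple for every $\lambda \in (0,\lambda_0]$ and, by Theorem~\ref{thm:L0}, for $\lambda = 0$, this gives in particular a rank-one spectral projector $\Pi_\lambda$ for $\Lp_\lambda$ at the eigenvalue $1$ with $|||\Pi_\lambda - \Pi_0||| \to 0$, where $\Pi_0$ is the corresponding projector for $\Lp_0$.

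Finally, to extract the statement on invariant measures, note that $\bo\circ T_\lambda = \bo$, so the linear functional $f \mapsto f(\bo)$ (defined on $\cB$ via $\cB \hookrightarrow (C^1(\cW^s))^*$) is preserved by each $\Lp_\lambda$; combined with the fact that $\Pi_\lambda$ has one-dimensional range spanned by $\mu_\lambda$ and $\mu_\lambda(\bo)=1$, this forces $\Pi_\lambda f = f(\bo)\,\mu_\lambda$, and likewise $\Pi_0 f = f(\bo)\,\mu_0$. Applying this to $f = \bo$, which lies in $\cB$ by Lemma~\ref{lem:embeddings} and satisfies $\bo(\bo)=1$, gives $|\mu_\lambda - \mu_0|_w = |\Pi_\lambda\bo - \Pi_0\bo|_w \le |||\Pi_\lambda - \Pi_0|||\,\|\bo\|_{\cB} \to 0$ as $\lambda \to 0$. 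The genuinely computational part is the estimate \eqref{eq:close}, and it is routine once the leaves $W_i'$ are set up; I expect the step requiring the most care to be checking that the Lasota-Yorke inequalities hold with constants uniform over the whole family $\{\Lp_\lambda\}_{\lambda\in[0,\lambda_0]}$ so that a single application of \cite{KL} covers them all at once, and, relatedly, transcribing the conclusion of \cite{KL} correctly --- in particular that it delivers $|||\Pi_\lambda - \Pi_0||| \to 0$ and not merely convergence of eigenvalues.
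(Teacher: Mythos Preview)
Your approach is essentially identical to the paper's: both compute \eqref{eq:close} by writing $\int_W(\Lp_0-\Lp_\lambda)f\,\vf\,dm_W$ as a sum over the preimage leaves $W_i$ of $\int_{W_i} f\,(\vf\circ T_0-\vf\circ T_\lambda)\,dm_{W_i}$, bound $|\vf\circ T_0-\vf\circ T_\lambda|_{C^\alpha(W_i)}$ by $\lambda^{1-\alpha}$ using the contraction, pair against $\|f\|_s$, and then invoke \cite[Corollary~1]{KL} together with the uniform Lasota--Yorke inequalities from Proposition~\ref{prop:ly} and Lemma~\ref{lem:L0 bounded}. Your treatment is in fact somewhat more explicit than the paper's in two places: you spell out the extraction of $|\mu_\lambda-\mu_0|_w\to 0$ from $|||\Pi_\lambda-\Pi_0|||\to 0$ via $\Pi_\lambda\bo=\mu_\lambda$, and you correctly note that the uniform contraction rate is $\max\{\lambda_0^\alpha,\kappa^{-\beta}\}$, so that the conclusion outside the disk of radius $\kappa^{-\beta}$ requires $\lambda_0$ small (the paper glosses over this).
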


\noindent
Theorem~\ref{thm:conv} is proved in Section~\ref{perturb}.
 A remarkable feature of the present framework is that the same Banach space can be used to characterize the 
 spectra of the transfer operators for the hyperbolic maps $T_\lambda$ as for the expanding map $T_0$. 
 For another variation on this type of connection, see \cite{demers}. 


\section{Properties of the Banach Spaces}
\label{props}

We begin by proving Lemma~\ref{lem:embeddings} and then establish some preliminary facts
about our spaces.

\begin{proof}[Proof of Lemma~\ref{lem:embeddings}]
The first and third embeddings are trivial:  We have the easy inequalities $| f |_{C^1(M)} \ge | f |_{C^{\beta'}(\cW^u)}$ and 
$\| f \|_s \ge | f |_w$, proving the continuity of the inclusions, while the injectivity of $C^1(M)$ in $C^{\beta'}(\cW^u)$ is obvious,
and that of $\cB$ in $\cB_w$ follows from the fact that we have defined $C^\alpha(W)$ so that $C^1(W)$ is dense in 
$C^\alpha(W)$ for all $W \in \cW^s$.
The fact that $\cB$ is relatively compact in $\cB_w$ is proved in Lemma~\ref{lem:compact}.

For the second inclusion, it is not enough that $| f |_{C^{\beta'}(\cW^u)} \ge \| f \|_{\cB}$.  Since we have defined $\cB$ to be the
completion of $C^1(\cW^u)$ in the $\| \cdot \|_{\cB}$ norm, we must approximate $f \in C^{\beta'}(\cW^u)$ by 
functions $g_\ve \in C^1(\cW^u)$.  

Let $\beta' \in (\beta, 1]$.  Let $\rho \ge 0$ be a $C^\infty$ bump function on $\mathbb{R}$, supported on $[-1,1]$, 
such that $\int_{-1}^1 \rho \, dt = 1$.  For $\ve > 0$, define $\rho_\ve(t) = \ve^{-1} \rho(t/\ve)$.
For $f \in C^{\beta'}(\cW^u)$, and $U \in \cW^u$, let $x = (s, t_U)$, $s \in [0,1]$, denote coordinates on 
$U$ (where $t_U \in [0,1]$ is fixed for all $x \in U$).  Define
the convolution,
\begin{equation}
\label{eq:convol}
g_\ve(x) = g_\ve(u, t_U) = \int_U f(y) \, \rho_\ve(x-y) \, dm_U(y) = \int_0^1 f(s, t_U) \, \rho_\ve(u-s) \, ds .
\end{equation}
It is an elementary estimate that $g_\ve \in C^1(\cW^u)$ with $|g_\ve|_{C^1(\cW^u)} \le C \ve^{\beta'-1} |f|_{C^{\beta'}(\cW^u)}$,
and $|g_\ve|_{C^{\beta'}(\cW^u)} \le |f|_{C^{\beta'}(\cW^u)}$, for some $C>0$ depending on $\rho$.  

We claim that $|g_\ve - f|_{C^\beta(\cW^u)} \le 4 |f|_{C^{\beta'}(\cW^u)} \ve^{\beta'-\beta}$.  
To see this, fix $U \in \cW^u$ and the vertical coordinate
$t_U$ corresponding to $U$.  Then for any $u \in U$, using the notation in \eqref{eq:convol}, we have,
\[
|g_\ve(u) - f(u)| \le \int_0^1 |f(u) - f(s)| \rho_\ve(u-s) ds \le H^\beta_U(f) \int_{u-\ve}^{u+\ve} |u-s|^{\beta'} \rho_\ve(u-s) ds \le \ve^{\beta'} H^{\beta'}_U(f).
\]
Then for $u, v \in U$, we have on the one hand,
\[
|(g_\ve-f)(u) - (g_\ve -f)(v)| \le 2 \ve^{\beta'} H^{\beta'}_U(f),
\]
while on the other hand,
\[
|(g_\ve-f)(u) - (g_\ve -f)(v)| \le |g_\ve(u) - g_\ve(v)| + |f(u) - f(v)| \le 2 |u-v|^{\beta'} H^{\beta'}_U(f)  .
\]
Interpolating, we conclude that\footnote{Here we use that for any $A, B > 0$ and $q \in (0,1)$,
$\min \{ A, B \} \le A^{1-q} B^q$.  For us, $q = \beta/\beta'$.}
\begin{equation}
\label{eq:interpolate}
|(g_\ve-f)(u) - (g_\ve -f)(v)| \le 2H^{\beta'}_U(f) \min \{\ve^{\beta'}  |u-v|^{-\beta}, |u-v|^{\beta'-\beta} \} \le 2 H^{\beta'}_U(f) \ve^{\beta'-\beta}.
\end{equation}
Putting these estimates together, we have $|g_\ve - f|_{C^\beta(\cW^u)} \le 4 |f|_{C^{\beta'}(\cW^u)} \ve^{\beta' - \beta}$, as claimed.

It is clear that, $\| g_\ve \|_s \le |g_\ve|_{C^0(\cW^u)} \le |f|_{C^0(\cW^u)}$. 

Next, for
any $W_1, W_2 \in \cW^s$ and $\vf_i \in C^1(W_i)$ with $|\vf_i|_{C^1(W_i)} \le 1$ and $d_0(\vf_1, \vf_2) =0$, we have
\[
\int_{W_1} g_\ve \, \vf_1 \, dm_{W_1} - \int_{W_2} g_\ve \, \vf_2 \, dm_{W_2}
= \int_0^1 (g_\ve(s_{W_1}, t) - g_\ve(s_{W_2}, t)) \vf_1(s_{W_1},t) \, dt \\
\le |s_{W_1} - s_{W_2}|^\beta |g_\ve|_{C^\beta(\cW^u)} ,
\]
where $(s_{W_i}, t)$, $t \in [0,1]$, are the coordinates in $W_i$ according to \eqref{eq:coords}.
Thus $\| g_\ve \|_u \le C |f|_{C^\beta(\cW^u)}$, so that the family $(g_\ve)_{\ve > 0}$ is bounded in $\cB$.  

The above estimates also show that $\| g \|_{\cB} \le |g|_{C^\beta(\cW^u)}$ for any $g \in C^\beta(\cW^u)$.
Thus 
\[
\| g_\ve - f \|_{\cB} \le |g_\ve - f|_{C^\beta(\cW^u)} \le 4\ve^{\beta'-\beta} |f|_{C^{\beta'}(\cW^u)} .
\]  
We conclude that
the sequence $g_\ve$ converges to $f$ in the $\| \cdot \|_{\cB}$ norm, and thus $f \in \cB$.  

We have shown $C^{\beta'}(\cW^u) \subset \cB$, if $\beta' > \beta$, and the continuity of this inclusion
follows from $| f |_{C^{\beta'}(\cW^u)} \ge \| f \|_{\cB}$.  Injectivity may fail if, for example, two functions in $C^{\beta'}(\cW^u)$
differ on a zero measure set of unstable leaves.

Finally, we turn to the fourth inclusion.  Let $\vf \in C^1(\cW^s)$ and $f \in C^1(\cW^u)$.  We index the elements of $\cW^s$ by
$W_s$, $s \in [0,1]$.  Then, recalling the identification of $f$ with the measure $f dm$,
\[
|f(\vf)| =  \left| \int_M f \, \vf \, dm \right| = \left| \int_0^1 \int_{W_s} f \, \vf \, dm_{W_s} ds \right|
\le \int_0^1 | f |_w |\vf|_{C^1(W_s)} \, ds \le | f |_w |\vf|_{C^1(\cW^s)} .
\]
By density, this bound extends to all $f \in \cB_w$, which proves the continuity of the embedding.  

To prove the injectivity, consider $f \in C^1(\cW^u)$ and $W \in \cW^s$.    Define,
\begin{equation}
\label{eq:distribution}
\langle D^1_W(f) , \vf \rangle := \int_W f \, \vf \, dm_W, \qquad \vf \in C^1(\cW^s).
\end{equation}
Since $|\langle D^1_W(f) , \vf \rangle| \le |f|_w |\vf|_{C^1(\cW^s)}$, $D^1_W(f)$ is an element of $(C^1(\cW^s))^*$.
Moreover, since the map $f \mapsto D^1_W(f)$ is continuous in the $| \cdot |_w$ norm, it can be extended to all of $\cB_w$.
Thus each element of $f \in \cB_w$ defines a family of distributions of order 1 on the leaves
$W \in \cW^s$.

Now suppose $f \in \cB_w$ with $|f|_w \neq 0$.  Then there exists $W_0 \in \cW^s$ and $\vf \in C^1(W_0)$ such that
$\langle D^1_{W_0}(f), \vf \rangle = \delta > 0$.  We may extend $\vf$ to a function $\bvf$ on $M$ by making $\bvf$ constant
on unstable leaves.  Then $\bvf \in C^1(\cW^s)$.  For $g \in C^1(\cW^u)$, since $\| g \|_u \le |g|_{C^1(\cW^u)}$, the map
$W \mapsto \langle D^1_W(g), \bvf \rangle$ is continuous in $W$.  By density, the map is continuous for all $g \in \cB_w$.

Thus there exists $\ve > 0$ such that for all $W \in \cW^s$ such that $d(W, W_0) < \ve$, we have 
$\langle D^1_W(f), \bvf \rangle \ge \delta/2$.  Now define $\bvf_\ve = \bvf \cdot 1_{N_\ve(W_0)}$, where $N_\ve(\cdot)$ denotes the
$\ve$ neighborhood of a set in $M$.  It still holds that $\bvf_\ve \in C^1(\cW^s)$.  Then indexing the stable leaves
in $N_\ve(W_0)$ by $W_s$, $s \in (-\ve, \ve)$, we estimate
\[
f(\bvf_\ve) = \int_{-\ve}^\ve \langle D^1_{W_s}(f), \bvf_\ve \rangle \, ds \ge \ve \delta > 0.
\]
Thus $f \neq 0$ as an element of $(C^1(\cW^s))^*$ and the inclusion $\cB_w \hookrightarrow (C^1(\cW^s))^*$ is injective.
\end{proof}

The following multiplier lemma is convenient for establishing the spectral decomposition and limit theorems for $T$.

\begin{lemma}
\label{lem:mult}
If $g \in C^1(M)$ and $f \in \cB$, then $gf \in \cB$ and $\| g f \|_{\cB} \le 3 |g|_{C^1(M)} \| f \|_{\cB}$.
\end{lemma}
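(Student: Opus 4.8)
The plan is to estimate each of the three norms $\|gf\|_s$, $\|gf\|_u$, and $|gf|_w$ separately by testing against admissible functions, using the fact that multiplication by $g$ preserves (up to constants) the relevant Hölder classes of test functions along leaves. First I would handle the strong stable norm: given $W \in \cW^s$ and $\vf \in C^\alpha(W)$ with $|\vf|_{C^\alpha(W)} \le 1$, I would write $\int_W (gf)\vf \, dm_W = \int_W f \, (g\vf) \, dm_W$ and observe that $g\vf$ is a legitimate test function on $W$, with $|g\vf|_{C^\alpha(W)} \le |g|_{C^0(W)}|\vf|_{C^0(W)} + H^\alpha_W(g)|\vf|_{C^0(W)} + |g|_{C^0(W)}H^\alpha_W(\vf) \le 2|g|_{C^1(M)}$ (using $\alpha \le 1$ so that the Lipschitz constant of $g$ dominates its $\alpha$-Hölder constant, and the length of $W$ being $1$). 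Hence $\int_W f \, (g\vf) \, dm_W \le 2|g|_{C^1(M)}\|f\|_s$, giving $\|gf\|_s \le 2|g|_{C^1(M)}\|f\|_s$. The same computation with $\alpha$ replaced by $1$ gives $|gf|_w \le 2|g|_{C^1(M)}|f|_w$.

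The strong unstable norm is the one requiring real care, and I expect it to be the main obstacle. Here I fix $W_1, W_2 \in \cW^s$ and test functions $\vf_i \in C^1(W_i)$ with $|\vf_i|_{C^1(W_i)} \le 1$ and $d_0(\vf_1,\vf_2) = 0$, and I must estimate $d(W_1,W_2)^{-\beta}|\int_{W_1}f\,(g\vf_1)\,dm_{W_1} - \int_{W_2}f\,(g\vf_2)\,dm_{W_2}|$. The difficulty is that the pair $(g\vf_1, g\vf_2)$ no longer satisfies $d_0(g\vf_1, g\vf_2) = 0$, because $g$ varies in the unstable direction between $W_1$ and $W_2$; so I cannot directly invoke $\|f\|_u$. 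The standard fix, which I would carry out, is to add and subtract an intermediate term. Let $\psi$ be the function on $W_2$ defined by $\psi(s_{W_2},t) = g(s_{W_1},t)\vf_2(s_{W_2},t)$, i.e. $g$ evaluated along $W_1$ but $\vf_2$ along $W_2$. Then $d_0(g\vf_1, \psi) = 0$ since $d_0(\vf_1,\vf_2)=0$ and $g(s_{W_1},\cdot)$ is common to both, so
\[
\left| \int_{W_1} f \, (g\vf_1) \, dm_{W_1} - \int_{W_2} f \, \psi \, dm_{W_2} \right| \le |g\vf_1|_{C^1} \, d(W_1,W_2)^\beta \, \|f\|_u \le 2|g|_{C^1(M)} d(W_1,W_2)^\beta \|f\|_u.
\]
For the remaining term $\int_{W_2} f\,(\psi - g\vf_2)\,dm_{W_2} = \int_{W_2} f\, \big((g(s_{W_1},\cdot) - g(s_{W_2},\cdot))\vf_2\big)\,dm_{W_2}$, I bound the test function $(g(s_{W_1},\cdot) - g(s_{W_2},\cdot))\vf_2$ in $C^\alpha(W_2)$: its $C^0$ norm is at most $|g|_{C^1(M)} d(W_1,W_2)$ and its $\alpha$-Hölder constant along $W_2$ is controlled by $|g|_{C^1(M)}(d(W_1,W_2) + d(W_1,W_2)) \le 2|g|_{C^1(M)} d(W_1,W_2)$, using that the mixed second differences of $g$ are bounded by its $C^1$ norm times the horizontal displacement. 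Wait — more carefully, the Hölder constant of $t \mapsto g(s_{W_1},t)-g(s_{W_2},t)$ along $W_2$ is at most $\min\{2|g|_{C^1}, |g|_{C^1}d(W_1,W_2)\cdot(\text{something})\}$; the cleanest bound is simply $|g|_{C^1(M)}d(W_1,W_2)$ for the $C^0$ part and $|g|_{C^1(M)}$ for the Lipschitz part, so the $C^\alpha(W_2)$ norm is at most $|g|_{C^1(M)}d(W_1,W_2) + |g|_{C^1(M)}$, which is crude. To recover the needed power of $d(W_1,W_2)$ I interpolate as in \eqref{eq:interpolate}: the function has sup norm $\le |g|_{C^1}d(W_1,W_2)$ and Lipschitz constant $\le 2|g|_{C^1}$, so its $C^\alpha$ norm (more precisely its $\alpha$-Hölder constant) is at most $2|g|_{C^1}(d(W_1,W_2))^{1-\alpha}$, whence $|(g(s_{W_1},\cdot)-g(s_{W_2},\cdot))\vf_2|_{C^\alpha(W_2)} \le 3|g|_{C^1(M)} d(W_1,W_2)^{1-\alpha}$. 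Testing against $\|f\|_s$ and using the hypothesis $\beta \le 1-\alpha$ (this is precisely where that restriction enters, as promised in the footnote) gives $|\int_{W_2} f\,(\psi - g\vf_2)\,dm_{W_2}| \le 3|g|_{C^1(M)} d(W_1,W_2)^{1-\alpha}\|f\|_s \le 3|g|_{C^1(M)}d(W_1,W_2)^\beta\|f\|_s$.

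Combining the two pieces, $\|gf\|_u \le 2|g|_{C^1(M)}\|f\|_u + 3|g|_{C^1(M)}\|f\|_s \le 3|g|_{C^1(M)}\|f\|_{\cB}$ (absorbing constants; one should track them to land at the stated constant $3$, or simply note the inequality holds with some universal multiple of $|g|_{C^1(M)}$ and rescale the bump/normalization). Adding the stable and unstable bounds yields $\|gf\|_{\cB} = \|gf\|_s + \|gf\|_u \le 3|g|_{C^1(M)}\|f\|_{\cB}$. Finally, to conclude $gf \in \cB$ (not merely that the norm is finite), I note that for $f \in C^1(\cW^u)$ the product $gf$ is again in $C^1(\cW^u)$, and for general $f \in \cB$ I approximate $f$ by $f_n \in C^1(\cW^u)$ in the $\|\cdot\|_{\cB}$ norm; then $gf_n \in C^1(\cW^u) \subset \cB$ and, by the inequality just proved applied to $f - f_n$, $\|gf - gf_n\|_{\cB} \le 3|g|_{C^1(M)}\|f-f_n\|_{\cB} \to 0$, so $gf \in \cB$ as the limit of elements of $\cB$. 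The main obstacle, as indicated, is the unstable-norm estimate and specifically the need to split off the mixed-difference term and interpolate it into a bound with exponent $1-\alpha \ge \beta$; everything else is a routine application of the definitions together with the elementary fact that $C^\alpha$ classes along leaves are stable under multiplication by $C^1(M)$ functions.
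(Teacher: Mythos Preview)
Your approach is essentially identical to the paper's: both absorb $g$ into the test function for $\|\cdot\|_s$, and for $\|\cdot\|_u$ both transport $g$ along the unstable holonomy (your $\psi$ is exactly the paper's $\tilde g\,\vf_2$ with $\tilde g = g\circ\Theta$), bound the matched difference by $\|f\|_u$, and control the remainder $\int_{W_2} f\,(\tilde g - g)\vf_2$ via $\|f\|_s$ together with the interpolation bound $|\tilde g - g|_{C^\alpha(W_2)}\lesssim |g|_{C^1}d(W_1,W_2)^{1-\alpha}$ and the hypothesis $\beta\le 1-\alpha$. The only discrepancy is bookkeeping: the paper uses the sharper product inequality $|g\vf|_{C^\alpha(W)}\le |g|_{C^\alpha(W)}|\vf|_{C^\alpha(W)}$ to get constant $1$ (rather than your $2$) in the stable estimate, which is what makes the final constant land exactly on~$3$.
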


\begin{proof}
By density, it suffices to prove the statement for $f \in C^1(\cW^u)$ and $g \in C^1(M)$.  
It is clear that $fg \in C^1(\cW^u)$.
We proceed to estimate the strong norm of the product.

Let $W \in \cW^s$ and $\vf \in C^\alpha(W)$ such that $|\vf|_{C^\alpha(W)} \le 1$.  
Then
\[
\int_W f \, g \, \vf \, dm_W \le \| f \|_s |g\vf|_{C^\alpha(W)} \le \| f \|_s |g|_{C^\alpha(W)} |\vf_{C^\alpha(W)} \le \| f \|_s |g|_{C^1(M)} .
\]
Next, choose $W_1, W_2 \in \cW^s$, and $\vf_i \in C^1(W_i)$ such that $|\vf_i|_{C^1(W)} \le 1$, $i=1,2$, and 
$d_0(\vf_1, \vf_2) = 0$.  Let $\Theta : W_2 \to W_1$ be the holonomy map sliding along unstable leaves and define
$\tilde g = g \circ \Theta$ on $W_2$.  Note that $|\tilde g|_{C^1(W_2)} \le |g|_{C^1(M)}$.  Then
\[
\int_{W_1} f \, g\, \vf_1 \, dm_{W_1} - \int_{W_2} f \, g \, \vf_2 \, dm_{W_2}
= \int_{W_1} f \, g\, \vf_1 \, dm_{W_1} - \int_{W_2} f \, \tilde g \, \vf_2 \, dm_{W_2} + \int_{W_2} f \, (\tilde g - g) \vf_2 \, dm_{W_2}.
\]
Since $d_0(g \vf_1, \tilde g \vf_2) = 0$, we may estimate the difference in integrals on the right hand side by 
$\| f \|_u |g|_{C^1(M)} d(W_1, W_2)^\beta$.  For the third integral on the right hand side, we estimate using the strong stable norm,
\begin{equation}
\label{eq:stable error}
\int_{W_2} f \, (\tilde g - g) \vf_2 \, dm_{W_2} \le \| f \|_s |\tilde g - g|_{C^\alpha(W_2)} |\vf_2|_{C^\alpha(W_2)} . 
\end{equation}
Since $g \in C^1(M)$, we have for $x \in W_2$,
\[
|\tilde g(x) - g(x)| \le |g|_{C^1(M)} |x - \Theta(x)| \le |g|_{C^1(M)} d(W_1, W_2) .
\]
While for $x, y \in W_2$, we have on the one hand
\[
d(x,y)^{-\alpha}|(\tilde g - g)(x) - (\tilde g - g)(y)| \le 2 |g|_{C^1(M)} d(x,y)^{-\alpha} d(W_1, W_2),
\]
while on the other  hand, using the fact that $\tilde g$ and $g$ are separately $C^1$,
\[
d(x,y)^{-\alpha}|(\tilde g - g)(x) - (\tilde g - g)(y)| \le 2 |g|_{C^1(M)} d(x,y)^{1-\alpha}.
\]
Interpolating as in \eqref{eq:interpolate}, we conclude,
\[
|\tilde g - g|_{C^\alpha(M)} \le 2|g|_{C^1(M)} d(W_1, W_2)^{1-\alpha} .
\]
Using this estimate together with \eqref{eq:stable error}, we have
\[
\left| \int_{W_1} f \, g\, \vf_1 \, dm_{W_1} - \int_{W_2} f \, g \, \vf_2 \, dm_{W_2} \right|
\le \big( \| f \|_u  d(W_1, W_2)^\beta
+ 2 \| f \|_s  d(W_1, W_2)^{1-\alpha} \big)  |g|_{C^1(M)}  .
\]
Since $1-\alpha \ge \beta$, we divide through by $d(W_1, W_2)^\beta$ and take the suprema over $W_1, W_2 \in \cW^s$
and $\vf_i \in C^1(W_i)$ with $|\vf_i|_{C^1(W_i)} \le 1$ to obtain the required estimate.
\end{proof}

\begin{lemma}
\label{lem:compact}
The unit ball of $\cB$ is compactly embedded in $\cB_w$.
\end{lemma}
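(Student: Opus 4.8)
The plan is to prove that the closed unit ball of $\cB$ is sequentially precompact in $\cB_w$; since $\cB_w$ is complete (being defined as a completion), this is equivalent to the asserted relative compactness. So let $(f_n)_{n\ge1}$ be any sequence with $\|f_n\|_{\cB}\le1$, and aim to extract a subsequence that is Cauchy in $|\cdot|_w$. Two ingredients drive the argument. First, \emph{one stable leaf at a time}: for a fixed $W\in\cW^s$ the functional $\vf\mapsto\int_W f\,\vf\,dm_W$ is bounded on $C^\alpha(W)$ with norm at most $\|f\|_s$, and the closed unit ball of $C^1(W)$ is totally bounded in $C^\alpha(W)$ (Arzel\`a--Ascoli gives relative compactness in $C^0(W)$, and since $\alpha<1$ an interpolation estimate of the form $H^\alpha_W(\psi)\le 2|\psi|_{C^0(W)}^{1-\alpha}$ for $|\psi|_{C^1(W)}\le2$ upgrades this to $C^\alpha(W)$). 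Second, \emph{uniformity across leaves}: by the definition of the strong unstable norm, whenever $d_0(\vf_1,\vf_2)=0$ one has
\[
\left|\int_{W_1} f\,\vf_1\,dm_{W_1}-\int_{W_2}f\,\vf_2\,dm_{W_2}\right|\le\|f\|_u\,d(W_1,W_2)^\beta,
\]
so integrals along nearby leaves, taken against test functions with the same coordinate representation, are close, uniformly over the bounded family $(f_n)$.

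For each $m\in\bN$ set $\ve_m=1/m$. Since $(\cW^s,d)$ is isometric to $([0,1],|\cdot|)$ it is compact, so choose a finite $\delta_m$-net $W^{(m)}_1,\dots,W^{(m)}_{J_m}$ of $\cW^s$ with $\delta_m^\beta<\ve_m$. On each $W^{(m)}_j$, total boundedness lets us choose $\vf^{(m)}_{j,1},\dots,\vf^{(m)}_{j,K_{j,m}}\in C^1(W^{(m)}_j)$ with $|\vf^{(m)}_{j,i}|_{C^1(W^{(m)}_j)}\le1$ forming an $\ve_m$-net, in the $C^\alpha(W^{(m)}_j)$ distance, for the closed unit ball of $C^1(W^{(m)}_j)$. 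This yields finitely many bounded linear functionals $L^{(m)}_{j,i}\colon f\mapsto\int_{W^{(m)}_j}f\,\vf^{(m)}_{j,i}\,dm_{W^{(m)}_j}$; their union over all $m$ is a countable family $\{L_p\}_{p\ge1}$, and $|L_p(f_n)|\le\|f_n\|_s\le1$ for all $n,p$. A diagonal extraction produces a subsequence, still written $(f_n)$, along which $L_p(f_n)$ converges for every $p$.

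It remains to check that this subsequence is Cauchy in $|\cdot|_w$. Fix $m$, and take $n,n'$ large enough that $|L_p(f_n)-L_p(f_{n'})|<\ve_m$ for every $p$ indexing a level-$m$ functional. Given any $W\in\cW^s$ and $\vf\in C^1(W)$ with $|\vf|_{C^1(W)}\le1$, pick $j$ with $d(W,W^{(m)}_j)<\delta_m$, let $\tilde\vf$ be the function on $W^{(m)}_j$ with the same coordinate representation as $\vf$ (so $d_0(\vf,\tilde\vf)=0$ and $|\tilde\vf|_{C^1(W^{(m)}_j)}\le1$), and pick $i$ with $|\tilde\vf-\vf^{(m)}_{j,i}|_{C^\alpha(W^{(m)}_j)}<\ve_m$. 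Decompose $\int_W(f_n-f_{n'})\vf\,dm_W$ into the differences $\int_W f_\bullet\vf\,dm_W-\int_{W^{(m)}_j}f_\bullet\tilde\vf\,dm_{W^{(m)}_j}$ for $\bullet=n$ and $\bullet=n'$, plus $\int_{W^{(m)}_j}(f_n-f_{n'})\tilde\vf\,dm_{W^{(m)}_j}$. The strong unstable norm bounds each of the first two by $\delta_m^\beta<\ve_m$; in the last term, replacing $\tilde\vf$ by $\vf^{(m)}_{j,i}$ costs at most $\|f_n\|_s\ve_m+\|f_{n'}\|_s\ve_m\le2\ve_m$ by the strong stable norm, and what is left is a level-$m$ functional difference, hence $<\ve_m$. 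Thus $|\int_W(f_n-f_{n'})\vf\,dm_W|\le5\ve_m$, and taking the supremum over $W$ and $\vf$ gives $|f_n-f_{n'}|_w\le5\ve_m$; letting $m\to\infty$ shows $(f_n)$ is Cauchy in $\cB_w$, hence convergent. The step on which everything hinges is the passage from leaf-by-leaf precompactness to uniformity over the continuum $\cW^s$, which is exactly what the strong unstable norm $\|\cdot\|_u$ provides; the only real bookkeeping subtlety is organizing the diagonal extraction so that a single subsequence works simultaneously for all $\ve_m$.
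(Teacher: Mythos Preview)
Your proof is correct and follows essentially the same approach as the paper: both arguments rest on a finite net of leaves in $\cW^s$, a finite $C^\alpha$-net of the $C^1$ unit ball of test functions on each leaf, the strong stable norm to control the test-function approximation, and the strong unstable norm to move between nearby leaves. The only difference is packaging---the paper proves total boundedness directly by approximating $|f|_w$ with finitely many linear functionals $\ell_{i,j}$ and then invoking compactness of a finite-dimensional ball, whereas you run a diagonal extraction to get sequential precompactness; these are equivalent formulations and neither buys anything the other does not.
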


\begin{proof}
The proof proceeds in two steps.  Fix $\ve > 0$.  First, for each $W \in \cW^s$, the unit ball of $C^1(W)$
is compactly embedded in $C^\alpha(W)$.  Let $C_1^1(W) = \{ \vf \in C^1(W) : |\vf|_{C^1(W)} \le 1 \}$
be the unit ball in $C^1(W)$.
By compactness, there exists a finite collection of functions $\vf_i$, $i = 1, \ldots N(\ve)$, such that
$\vf_i \in C_1^1(W)$ and the set $\{ \vf_i \}_{i=1}^N$ forms an $\ve$-cover of $C_1^1(W)$ in the 
$| \cdot |_{C^\alpha(W)}$ norm.  Indeed, we can extend $\vf_i$ to $M$ by defining the extension 
$\bvf_i$ to be constant
on unstable leaves.  Then $\{ \bvf_i \}_{i=1}^N$ forms an $\ve$-cover of $C_1^1(W')$ in the
$| \cdot |_{C^\alpha(W')}$ norm for any $W' \in \cW^s$. 

Next, choose a finite set $\{ W_j \}_{j=1}^L \subset \cW^s$ which forms an $\ve$-cover of 
$\cW^s$ in the metric $d(\cdot, \cdot)$ between stable leaves.

Now for $f \in C^1(\cW^u)$, $W \in \cW^s$ and $\vf \in C_1^1(W)$,
choose $W_j$ and $\bvf_i$ such that $d(W_j, W) \le \ve$ and $|\bvf_i - \vf|_{C^\alpha(W)} \le \ve$.
Then,
\[
\begin{split}
\left| \int_W f \, \vf \, dm_W \right.  & - \left. \int_{W_j} f \, \bvf_i \, dm_{W_j} \right|
 \le \left| \int_W f \, (\vf - \bvf_i) \, dm_W\right| + \left| \int_W f \, \bvf_i \, dm_W - \int_{W_j} f \, \bvf_i \, dm_{W_j} \right| \\
& \le \| f\|_s |\vf - \bvf_i |_{C^\alpha(W)} + |s_W - s_{W_j}|^\beta \| f \|_u \le \ve \|f \|_s + \ve^\beta \| f \|_u
\le \ve^\beta \| f \|_{\cB} . 
\end{split}
\]
Taking the supremum over $W$ and $\vf$ yields that $|f|_w$ can be approximated by the
finite set of linear functionals $\ell_{i,j}(f) = \int_{W_j} f \, \bvf_i \, dm_{W_j}$, $1 \le i \le N$, $1 \le j \le L$.  This implies the
required compactness by the following argument. 

Let $\cB_1$ be the unit ball in $\cB$.  If $f\in \cB_1$, then $\{ \ell_{i,j}(f) \}_{i,j}$ can be identified with a vector $(\ell_{i,j}(f))_{i,j}$ in the unit ball
of $\mathbb{R}^{N+L}$.  Since this is a compact set, there exists a finite set $\{ g_n \}_{n=1}^K \subset \cB_1$
whose set of associated vectors $\{ (\ell_{i,j}(g_n))_{i,j} \}_{n}$ forms an $\ve$-cover of the unit ball
in $\mathbb{R}^{N+L}$.  By the triangle inequality, any $f \in \cB_1$ can be approximated 
up to $\ve^\beta$ in the weak norm by an element of the set $\{ g_n \}_{n=1}^K$.
\end{proof}


\section{Spectral Properties of the Transfer Operator}

In this section, we prove Theorem~\ref{thm:spec} in a series of steps, first proving quasi-compactness
of the transfer operator acting on $\cB$, then a characterization of the peripheral spectrum, and finally the
existence of a spectral gap.  We begin by establishing the continuity of $\Lp$ on our spaces
$\cB$ and $\cB_w$.

\begin{lemma}
\label{lem:cont}
The transfer operator $\Lp$ is a bounded, linear operator on both $\cB$ and $\cB_w$.
\end{lemma}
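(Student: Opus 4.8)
The plan is to verify directly that each of the three norms $|\cdot|_w$, $\|\cdot\|_s$, and $\|\cdot\|_u$ is non-increasing (up to a uniform constant, in fact exactly) under $\Lp$, working with $f \in C^1(\cW^u)$ and then extending by density. The key structural fact I would use repeatedly is the pullback formula: for $\vf$ a test function on a stable leaf $W \in \cW^s$, the preimage $T^{-1}W$ is a disjoint union of $\kappa$ stable leaves $W^{(i)}$, each mapped affinely onto $W$ with horizontal expansion $\kappa$ and vertical contraction $\lambda$, and the change-of-variables identity
\[
\int_W \Lp f \, \vf \, dm_W = \sum_{i=1}^\kappa \kappa^{-1} \int_{W^{(i)}} f \, (\vf \circ T) \, dm_{W^{(i)}},
\]
which follows from \eqref{eq:trans def} and the fact that the stable Jacobian of $T$ along vertical leaves is $\lambda$ while $JT = \kappa\lambda$ (the $\kappa^{-1}$ is the ratio $\lambda / (\kappa\lambda)$ coming from pushing arclength forward). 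The crucial regularity input is that $\vf \circ T$, restricted to each $W^{(i)}$, has $C^\alpha$ (resp. $C^1$) norm bounded by that of $\vf$, since the vertical contraction by $\lambda < 1$ only decreases Hölder constants and does not increase the sup norm.

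First I would handle the weak norm: given $\vf \in C^1(W)$ with $|\vf|_{C^1(W)} \le 1$, each $\vf \circ T|_{W^{(i)}}$ satisfies $|\vf\circ T|_{C^1(W^{(i)})} \le 1$ (using $\lambda \le 1$ for the Lipschitz part), so the displayed identity gives $\int_W \Lp f \, \vf \, dm_W \le \sum_i \kappa^{-1} |f|_w = |f|_w$; taking the supremum over $W$ and $\vf$ yields $|\Lp f|_w \le |f|_w$. The strong stable norm is identical verbatim with $C^\alpha$ in place of $C^1$, giving $\|\Lp f\|_s \le \|f\|_s$. For the strong unstable norm I would take $W_1, W_2 \in \cW^s$ with $d(W_1,W_2)$ small and matching test functions $\vf_1,\vf_2$ with $d_0(\vf_1,\vf_2)=0$; the preimages $W_1^{(i)}, W_2^{(i)}$ pair up naturally (the $i$-th branch of $T^{-1}$), with $d(W_1^{(i)}, W_2^{(i)}) = \kappa^{-1} d(W_1,W_2)$ and $d_0(\vf_1 \circ T, \vf_2 \circ T) = 0$ on the matched leaves. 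Subtracting the two integrals branch by branch and applying the definition of $\|f\|_u$ on each pair gives
\[
\left| \int_{W_1} \Lp f \, \vf_1 \, dm_{W_1} - \int_{W_2} \Lp f \, \vf_2 \, dm_{W_2} \right| \le \sum_{i=1}^\kappa \kappa^{-1} \|f\|_u \, \big(\kappa^{-1} d(W_1,W_2)\big)^\beta = \kappa^{-\beta} d(W_1,W_2)^\beta \|f\|_u,
\]
so $\|\Lp f\|_u \le \kappa^{-\beta}\|f\|_u \le \|f\|_u$. Combining, $\|\Lp f\|_{\cB} \le \|f\|_{\cB}$ and $|\Lp f|_w \le |f|_w$ on the dense subspace $C^1(\cW^u)$, hence $\Lp$ extends to a bounded operator on both $\cB$ and $\cB_w$ (linearity being immediate from \eqref{eq:dist def}).

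The main technical point to get right — and what I expect to be the only genuine obstacle — is the bookkeeping for the strong unstable norm: one must check that the branches of $T^{-1}$ genuinely preserve the matching condition $d_0(\cdot,\cdot)=0$ and that the horizontal distance between corresponding preimage leaves contracts by exactly $\kappa^{-1}$ (not $\lambda$), since $T$ expands horizontally by $\kappa$. This is where the geometry of the Baker's map — that $T^{-1}$ contracts the $s$-coordinate while the test-function pairing is along the $t$-coordinate which is contracted by $\lambda$ — must be used carefully; everything else is a routine change of variables. One should also note $\Lp$ maps $C^1(\cW^u)$ into itself by \eqref{eq:trans def}, so the density argument is legitimate. (This lemma in fact already contains the seeds of Proposition~\ref{prop:ly}; the Lasota--Yorke inequality there presumably refines the stable estimate by splitting $\vf$ into its average plus a remainder as in Section~\ref{ped}, but for mere boundedness the crude bounds above suffice.)
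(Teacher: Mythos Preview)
Your proposal is correct and follows essentially the same approach as the paper: both arguments rest on showing that $\Lp$ preserves $C^1(\cW^u)$, establishing norm bounds via the pullback formula \eqref{eq:first sum}, and then extending by density. The only difference is organizational: the paper's proof of this lemma defers the actual norm estimates to Proposition~\ref{prop:ly} (the Lasota--Yorke inequalities), whereas you inline simplified versions of those estimates here --- in particular, your crude bound $\|\Lp f\|_s \le \|f\|_s$ (without the average-subtraction trick) suffices for boundedness and is exactly the shortcut the paper would take if it were not about to prove the sharper contraction estimate anyway. Your closing parenthetical remark correctly anticipates this.
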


\begin{proof}
The proof is straightforward given that $\Lp$ preserves\footnote{In many systems with discontinuities,
it is not so simple to find a space like $C^1(\cW^u)$ that is preserved by $\Lp$.  In such cases,
this proof must proceed by approximation, for example approximation of $\Lp f$ by smooth functions
in the $\cB$ norm.  See, for example, \cite[Lemmas~3.7 and 3.8]{dz1}.} the space $C^1(\cW^u)$.  More precisely,
if $f \in C^1(\cW^u)$, then it is clear from \eqref{eq:trans def} that $\Lp f \in C^1(\cW^u)$ since
for any $U \in \cW^u$, $T^{-1}U \subset U'$ for some $U' \in \cW^u$.  Indeed,
$|\Lp f|_{C^0(U)} \le |f|_{C^0(T^{-1}U)} (\kappa \lambda)^{-1}$ and 
$H_U^1(\Lp f) \le H^1_U(f \circ T^{-1}) (\kappa \lambda)^{-1} \le H^1_{T^{-1}U}(f) \kappa^{-2} \lambda^{-1}$.
Taking the supremum over $U \in \cW^u$ yields 
$|\Lp f |_{C^1(\cW^u)} \le |f|_{C^1(\cW^u)} (\kappa \lambda)^{-1}$.

Finally, Proposition~\ref{prop:ly} proves that for $f \in \cB$, $\| \Lp f \|_{\cB} \le C \| f \|_{\cB}$ for some
$C>0$ independent of $f$.  Thus if $g \in \cB$ and
$(f_n)_{n \in \mathbb{N}} \subset C^1(\cW^u)$ is a sequence converging to $g$ in the  
$\| \cdot \|_{\cB}$ norm, then
$(\Lp f_n)_{n \in \mathbb{N}} \subset C^1(\cW^u)$ is a sequence converging to $\Lp g$ in the $\cB$ norm,
since $\| \Lp f_n - \Lp g \|_{\cB} \le C \| f_n - g\|_{\cB}$.  Since $\cB$ is the completion of $C^1(\cW^u)$,
we conclude 
$\Lp g \in \cB$.

A similar argument holds for $\cB_w$.
\end{proof}

\subsection{Quasicompactness of $\Lp$}

\begin{proposition}
\label{prop:ly}
For any $n \ge 0$ and $f \in \cB$,
\begin{eqnarray}
  \| \Lp^n f \|_s & \le & \lambda^{\alpha n} \| f \|_s + |f|_w, \label{eq:strong stable}  \\ 
  \| \Lp^n f \|_u & \le & \kappa^{-\beta n} \| f \|_u,  \label{eq:strong unstable}   \\
  | \Lp^n f |_w  & \le & |f|_w.   \label{eq:weak}
\end{eqnarray}
\end{proposition}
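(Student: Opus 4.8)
The plan is to first reduce everything to $f\in C^1(\cW^u)$, identified with the density $f\,dm$. Indeed $C^1(\cW^u)$ is dense in $\cB$ by construction, the three seminorms $\|\cdot\|_s$, $\|\cdot\|_u$, $|\cdot|_w$ are all dominated by $\|\cdot\|_{\cB}$, and $\Lp$ (hence $\Lp^n$) is bounded on $\cB$ by Lemma~\ref{lem:cont}; so both sides of \eqref{eq:strong stable}--\eqref{eq:weak} depend continuously on $f$ and it is enough to prove them on the dense subspace. The single device I would set up, from which all three bounds flow, is a change-of-variables identity on a stable leaf. Fix $W\in\cW^s$. From the iterated pointwise formula $\Lp^n f=(f\circ T^{-n})(\kappa\lambda)^{-n}$ on $T^nM$ (cf.\ \eqref{eq:trans def}), together with the fact that $T^n$ carries each depth-$n$ cylinder $R_{i_1\cdots i_n}$ affinely onto $M$, contracting the vertical direction by $\lambda^n$ and expanding the horizontal direction by $\kappa^n$, one obtains
\[
\int_W \Lp^n f\,\vf\,dm_W \;=\; \kappa^{-n}\sum_{j=1}^{\kappa^n}\int_{W_j'} f\,(\vf\circ T^n)\,dm_{W_j'},
\]
where $W_1',\dots,W_{\kappa^n}'$ are the $\kappa^n$ full vertical preimages of $W$ under $T^n$, one inside each depth-$n$ cylinder (the leaf Jacobian $\lambda^n$ of $T^n$ cancels against $(\kappa\lambda)^{-n}$). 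Two structural facts, both immediate from affinity, will be used repeatedly: (a) the vertical component of $T^n$ on a given cylinder depends only on that cylinder, not on the horizontal coordinate; and (b) $T^n$ contracts the distance between two stable leaves lying in the same cylinder by exactly the factor $\kappa^{-n}$.

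Granting this identity, \eqref{eq:weak} and \eqref{eq:strong stable} are short. For \eqref{eq:weak}: since $T^n$ only contracts along $W_j'$, $|\vf\circ T^n|_{C^1(W_j')}\le|\vf|_{C^1(W)}\le1$, so each of the $\kappa^n$ integrals is $\le|f|_w$, and multiplying the sum by $\kappa^{-n}$ gives $|\Lp^n f|_w\le|f|_w$. For \eqref{eq:strong stable}, where $|\vf|_{C^\alpha(W)}\le1$ but $\vf\circ T^n$ is not small in $C^\alpha(W_j')$, I would subtract an average exactly as in Section~\ref{ped}: pick $u_j\in W_j'$, set $\bvf_j=\vf\circ T^n(u_j)$, and write $\int_{W_j'} f(\vf\circ T^n)\,dm_{W_j'}=\int_{W_j'} f(\vf\circ T^n-\bvf_j)\,dm_{W_j'}+\bvf_j\int_{W_j'} f\,dm_{W_j'}$. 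The computation in \eqref{eq:avg subtract} gives $|\vf\circ T^n-\bvf_j|_{C^\alpha(W_j')}\le\lambda^{\alpha n}$ (the vertical contraction $\lambda^n$ raised to the power $\alpha$), so the first term is $\le\lambda^{\alpha n}\|f\|_s$; and since $|\bvf_j|\le1$ and $\big|\int_{W_j'} f\,dm_{W_j'}\big|\le|f|_w$ (test function $\equiv1$), the second is $\le|f|_w$. Summing over $j$ and multiplying by $\kappa^{-n}$ yields \eqref{eq:strong stable}.

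The estimate I expect to carry the real content is \eqref{eq:strong unstable}, the only one invoking the expanding direction. Given $W_1,W_2\in\cW^s$ and $\vf_i\in C^1(W_i)$ with $|\vf_i|_{C^1(W_i)}\le1$ and $d_0(\vf_1,\vf_2)=0$, I would apply the identity to both leaves and pair the preimages cylinder by cylinder: let $W_{1,j}'$, $W_{2,j}'$ be the preimages of $W_1$, $W_2$ in the $j$-th cylinder. By (b), $d(W_{1,j}',W_{2,j}')=\kappa^{-n}d(W_1,W_2)$; by (a), the vertical action of $T^n$ on the $j$-th cylinder is the same map applied to either preimage, so $\vf_1\circ T^n$ on $W_{1,j}'$ and $\vf_2\circ T^n$ on $W_{2,j}'$ are literally the same function of the leaf parameter, i.e.\ they still satisfy $d_0(\vf_1\circ T^n,\vf_2\circ T^n)=0$; and both have $C^1$-norm $\le1$. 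Hence the definition of $\|\cdot\|_u$ bounds the $j$-th difference of integrals by $\|f\|_u\,d(W_{1,j}',W_{2,j}')^\beta=\|f\|_u\,\kappa^{-\beta n}d(W_1,W_2)^\beta$. Summing over the $\kappa^n$ cylinders, multiplying by $\kappa^{-n}$, dividing by $d(W_1,W_2)^\beta$, and taking the supremum over $W_1,W_2,\vf_1,\vf_2$ gives $\|\Lp^n f\|_u\le\kappa^{-\beta n}\|f\|_u$.

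The main obstacle is largely bookkeeping: setting up the cylinder/preimage structure of $T^n$ on stable leaves — including that a full vertical leaf has exactly $\kappa^n$ preimages, paired by cylinder — carefully enough that the single change-of-variables identity feeds all three inequalities. Within that, the two genuinely hyperbolic observations are those in the unstable estimate: that the relation $d_0=0$ is preserved under pushing test functions forward by $T^n$ (which is precisely where the product structure of the Baker's map enters) and that inter-leaf distances contract by $\kappa^{-n}$ while the leaf Jacobian only contributes the benign factor $\lambda^n$.
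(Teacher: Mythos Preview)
Your proof is correct and follows essentially the same route as the paper: the same change-of-variables identity on stable leaves, the same contraction bounds for the weak and stable norms (with the inessential variation that you subtract a point value $\vf\circ T^n(u_j)$ where the paper subtracts the integral average $\int_{W_i}\vf\circ T^n\,dm_{W_i}$), and the same cylinder-by-cylinder pairing for the unstable norm. One small caution: your appeal to Lemma~\ref{lem:cont} for the density reduction is mildly circular, since the paper's proof of that lemma invokes Proposition~\ref{prop:ly}; but the fix is trivial, as the inequalities you prove on $C^1(\cW^u)$ themselves furnish the boundedness needed to pass to the completion.
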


\begin{proof}
By density of $C^1(\cW^u)$ in both $\cB$ and $\cB_w$, it suffices to prove the bounds for
$f \in C^1(\cW^u)$.  

For $W \in \cW^s$ and $n \ge 1$, we note that $T^{-n}W = \cup_i W_i$, where each $W_i \in \cW^s$.  We call this
set the $n$th generation of $W$ and denote it by $\cG_n(W)$.
It follows from the definition of $T$ that the cardinality of $\cG_n(W)$ is $\kappa^n$.

We begin by proving \eqref{eq:weak}.

\medskip
\noindent
{\em Weak Norm Bound.}  Let $f \in C^1(\cW^u)$, $W \in \cW^s$, and suppose $\vf \in C^1(W)$ with $|\vf|_{C^1(W)} \le 1$.
For $n \ge 1$, we estimate,
\begin{equation}
\label{eq:first sum}
\begin{split}
\int_W \Lp^n f \, \vf \, dm_W & = \sum_{W_i \in \cG_n(W)} \int_{W_i} f \, \vf \circ T^n \, (\lambda\kappa)^{-n} 
\, J^s_{W_i}T^n \,dm_{W_i} \\
& = \sum_{W_i \in \cG_n(W)} \kappa^{-n} \int_{W_i} f \, \vf \circ T^n \, dm_{W_i},
\end{split}
\end{equation}
where $J^s_{W_i}T^n = \lambda^n$ represents the stable Jacobian of $T^n$ along $W_i$.  
Now for $x, y \in W_i$,
\[
|\vf \circ T^n(x) - \vf \circ T^n(y)| \le H^1_W(\vf) |T^nx - T^ny| \le \lambda^n |x-y|,
\]
so that $|\vf \circ T^n|_{C^1(W_i)} \le |\vf|_{C^1(W)} \le 1$.  Thus applying the definition of the weak norm to each term of
the sum in \eqref{eq:first sum}, we have,
\[
\int_W \Lp^n f \, \vf \, dm_W \le \sum_{W_i \in \cG_n(W)} |f|_w \kappa^{-n} = |f|_w.
\]
Taking the suprema over $\vf \in C^1(W)$ with $|\vf|_{C^1(W)} \le 1$ and $W \in \cW^s$ yields \eqref{eq:weak}.

\medskip
\noindent
{\em Strong Stable Norm Bound.}
As before, let $f \in C^1(\cW^u)$, $W \in \cW^s$, and suppose $\vf \in C^\alpha(W)$ with $|\vf|_{C^\alpha(W)} \le 1$.
For $n \ge 1$, on each $W_i \in \cG_n(W)$, define $\bvf_i = \int_{W_i} \vf \circ T^n \, dm_{W_i}$.  Then following
\eqref{eq:first sum} and \eqref{eq:contract ly}, we write,
\begin{equation}
\label{eq:stable split}
\begin{split}
\int_W \Lp^n f \, \vf \, dm_W & = \sum_{W_i \in \cG_n(W)} \int_{W_i} f \, (\vf \circ T^n - \bvf_i) \, \kappa^{-n} \,dm_{W_i}
+ \int_{W_i} f \, \bvf_i \, \kappa^{-n} \, dm_{W_i} \\
& \le \kappa^{-n}  \sum_{W_i \in \cG_n(W)} \| f \|_s |\vf \circ T^n - \bvf_i|_{C^\alpha(W_i)} + |f|_w |\bvf_i|_{C^1(W_i)}  \, .
\end{split}
\end{equation}
For the first term on the right side of \eqref{eq:stable split}, we use \eqref{eq:avg subtract} to estimate
$|\vf \circ T^n - \bvf_i|_{C^\alpha(W_i)} \le \lambda^{\alpha n} |\vf|_{C^\alpha(W)}$, while $|\bvf_i|_{C^1(W_i)} \le |\vf|_{C^0(W)}$
since $\bvf_i$ is constant on $W_i$.  Putting these together, we conclude,
\[
\int_W \Lp^n f \, \vf \, dm_W \le \kappa^{-n} \sum_{W_i \in \cG_n(W)} \Big( \lambda^{\alpha n} \| f \|_s + |f|_w \Big) \le \lambda^{\alpha n} \| f \|_s + |f|_w,
\]
and taking the appropriate suprema proves \eqref{eq:strong stable}.

\medskip
\noindent
{\em Strong Unstable Norm Bound.}
Let $f \in C^1(\cW^u)$.  For $W^1, W^2 \in \cW^s$, let $\vf_i \in C^1(W^i)$, $i = 1,2$, such that $| \vf_i |_{C^1(W)} \le 1$ and
$d_0(\vf_1, \vf_2) = 0$.  This assumption requires that $\vf_1(x) = \vf_2(y)$ whenever $x$ and $y$ lie on the same horizontal line. 

Note that due to the product structure of the map, the elements of $\cG_n(W^1)$ and $\cG_n(W^2)$ are in one-to-one correspondence:
for each $W_i^1 \in \cG_n(W^1)$, there is a unique element $W_i^2 \in \cG_n(W^2)$ lying in the maximal vertical rectangle 
containing $W_i^1$ on which $T^n$ is smooth.
With this pairing, we estimate,
\[
\begin{split}
\int_{W^1} \Lp^n f \, \vf_1 \, dm_{W_1} - \int_{W_2} \Lp^n f \, \vf_2 \, dm_{W_2} 
& = \kappa^{-n} \sum_{W_i^1 \in \cG_n(W)} \int_{W_i^1} f \, \vf_1 \circ T^n \, dm_{W^1_i} - \int_{W_i^2} f \, \vf_2 \circ T^n \, dm_{W^2_i} \\
& \le \kappa^{-n} \sum_{W_i^1 \in \cG_n(W)} d(W_i^1, W_i^2)^\beta \| f \|_u ,
\end{split}
\]
where we have applied the strong unstable norm since $d_0(\vf_1 \circ T^n, \vf_2 \circ T^n) = 0$ and 
$|\vf_j \circ T^n|_{C^1(W_i^j)} \le |\vf_j|_{C^1(W)} \le 1$.  Since $d(W_i^1, W_i^2) = \kappa^{-n} d(W^1, W^2)$, we conclude
\[
d(W^1, W^2)^{-\beta} \left| \int_{W^1} \Lp^n f \, \vf_1 \, dm_{W_1} - \int_{W_2} \Lp^n f \, \vf_2 \, dm_{W_2} \right|
\le \kappa^{-\beta n} \| f \|_u,
\]
and taking the appropriate suprema proves \eqref{eq:strong unstable}.
\end{proof}

\begin{proposition}
\label{prop:quasi}
The operator $\Lp: \cB \circlearrowleft$ is quasi-compact:  its spectral radius is 1 and its essential spectral radius is bounded by
$\rho = \max \{ \lambda^\alpha, \kappa^{-\beta} \} < 1$.  The part of the spectrum outside the disk of radius $\rho + \ve$
for any $\ve>0$ is finite dimensional.  Finally, the peripheral spectrum of $\Lp$
is semi-simple, i.e. the eigenvalues of modulus 1 have no Jordan blocks. 
\end{proposition}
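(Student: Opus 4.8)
The plan is to assemble the Lasota--Yorke inequalities of Proposition~\ref{prop:ly} with the compactness of Lemma~\ref{lem:compact} and run the standard Hennion--Nussbaum argument, exactly as in the contracting example of Section~\ref{ped}. First I would add \eqref{eq:strong stable} and \eqref{eq:strong unstable}, using $\|\Lp^n f\|_{\cB} = \|\Lp^n f\|_s + \|\Lp^n f\|_u$, to obtain, for all $n \ge 0$ and $f \in \cB$,
\[
\|\Lp^n f\|_{\cB} \le \rho^n \|f\|_{\cB} + |f|_w, \qquad \rho = \max\{\lambda^\alpha, \kappa^{-\beta}\} < 1 .
\]
Since $|f|_w \le \|f\|_s \le \|f\|_{\cB}$ (Lemma~\ref{lem:embeddings}), this already gives the uniform bound $\|\Lp^n\|_{\cB} \le 1 + \rho^n \le 2$ for every $n \ge 0$, whence the spectral radius satisfies $r(\Lp) \le 1$. (That $\Lp$ is bounded on $\cB$ in the first place is Lemma~\ref{lem:cont}.)

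For the essential spectral radius I would invoke Hennion's theorem \cite{hennion spec} via the Nussbaum formula \cite{nussbaum}: $|\cdot|_w$ is a seminorm on $\cB$ dominated by $\|\cdot\|_{\cB}$, and by Lemma~\ref{lem:compact} every $\|\cdot\|_{\cB}$-bounded sequence has a $|\cdot|_w$-Cauchy subsequence, so the displayed inequality (with coefficient $a_n = \rho^n$, hence $a_n^{1/n} = \rho$) yields $r_{\mathrm{ess}}(\Lp) \le \rho$. To pin down the spectral radius from below I would use the conformal measure $m$: the map $\ell(f) = f(\bo) = \int_M f \, dm$ is a bounded linear functional on $\cB$ (because $\bo \in C^1(\cW^s)$ and $\cB \hookrightarrow (C^1(\cW^s))^*$ by Lemma~\ref{lem:embeddings}), it is nonzero since $\ell(\bo) = 1$, and $\ell(\Lp f) = f(\bo \circ T) = f(\bo) = \ell(f)$ because $\bo \circ T = \bo$. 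Thus $\ell$ is an eigenvector of $\Lp^*$ with eigenvalue $1$, so $1 \in \mathrm{spec}(\Lp^*) = \mathrm{spec}(\Lp)$ and therefore $r(\Lp) = 1$. Combined with $r_{\mathrm{ess}}(\Lp) \le \rho < 1$, this is quasi-compactness.

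The remaining two assertions are soft. Since $r_{\mathrm{ess}}(\Lp) \le \rho$, the part of $\mathrm{spec}(\Lp)$ lying in $\{|z| > \rho\}$ consists of isolated eigenvalues of finite algebraic multiplicity with no accumulation point outside $\bar D(0,\rho)$; hence for each $\ve > 0$ the compact annulus $\{\rho + \ve \le |z| \le 1\}$ contains only finitely many of them, so the associated spectral projection has finite rank. For semi-simplicity of the peripheral spectrum I would argue by contradiction: if an eigenvalue $e^{2\pi i \theta_0}$ of modulus $1$ carried a nontrivial Jordan block, then on its (finite-dimensional) generalized eigenspace $\Lp$ would act as $e^{2\pi i \theta_0}(\Id + N)$ with $N \neq 0$ nilpotent; choosing $v$ with $N^m v \neq 0 = N^{m+1} v$ and $m \ge 1$, the expansion $(\Id + N)^n v = \sum_{k=0}^m \binom{n}{k} N^k v$ with $v, Nv, \dots, N^m v$ linearly independent forces $\|\Lp^n v\|_{\cB} = \|(\Id + N)^n v\|_{\cB} \to \infty$, contradicting $\|\Lp^n\|_{\cB} \le 2$. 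Hence every peripheral eigenvalue is semi-simple.

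Essentially every step is bookkeeping once Proposition~\ref{prop:ly}, Lemma~\ref{lem:compact} and Lemma~\ref{lem:embeddings} are available; the only point that requires a moment's thought is the lower bound $r(\Lp) \ge 1$, since when $\lambda < 1/\kappa$ the constant function $\bo$ is \emph{not} fixed by $\Lp$ (indeed $\Lp \bo$ vanishes off $TM$), so the invariant object must be located on the dual side — which is exactly what the conformal measure provides.
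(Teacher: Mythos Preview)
Your proof is correct and follows essentially the same route as the paper: combine \eqref{eq:strong stable} and \eqref{eq:strong unstable} into a single Lasota--Yorke inequality, invoke Lemma~\ref{lem:compact} and Hennion--Nussbaum for the essential spectral radius, and use the conformality $\Lp^* m = m$ (your functional $\ell$) to pin the spectral radius at $1$. The only cosmetic difference is in the semi-simplicity step: the paper observes directly from the \emph{proof} of Proposition~\ref{prop:ly} (bounding $\|\Lp^n f\|_s \le \|f\|_s$ without splitting off the average) that in fact $\|\Lp^n\|_{\cB} \le 1$, whereas you extract the weaker $\|\Lp^n\|_{\cB} \le 2$ from the stated inequalities---either bound rules out Jordan blocks, and your explicit nilpotent expansion makes the contradiction transparent.
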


\begin{proof}
The proposition is a simple corollary of Proposition~\ref{prop:ly}.  The estimates \eqref{eq:strong stable} and \eqref{eq:strong unstable} imply that for any $f \in \cB$ and $n \ge 1$,
\[
\| \Lp^n f \|_{\cB} = \| \Lp^n f \|_s + \| \Lp^n f \|_u \le \lambda^{\alpha n} \| f \|_s + \kappa^{-\beta n} \| f \|_u + |f|_w
\le \rho^n \| f \|_{\cB} + |f|_w .
\]
This, coupled with the compactness of the unit ball of $\cB$ in $\cB_w$ (Lemma~\ref{lem:compact}), 
implies the stated bound on the essential spectral radius
of $\Lp$ by the standard Hennion-Nussbaum argument \cite{nussbaum, hennion spec}. 

The bounds also imply that the spectral radius of $\Lp$ is at most 1.  Since $\Lp^*m = m$, 1 is in the spectrum of $\Lp^*$,
and therefore in the spectrum of $\Lp$ \cite[Theorem VI.7]{reed}.  
Thus the spectral radius of $\Lp$ is $1 > \rho$, and $\Lp$ is quasi-compact. 

With quasi-compactness established, the fact that the peripheral spectrum is semi-simple
follows immediately from the bound $\| \Lp^n f \|_{\cB} \le \| f \|_{\cB}$, for all $n \in \mathbb{N}, f \in \cB$, which in turn follows from the
proof of Proposition~\ref{prop:ly}.
\end{proof}


\subsection{Characterization of the peripheral spectrum}
\label{periph}

The next step needed to complete the spectral decomposition of the transfer operator is a 
characterization of the peripheral spectrum.  The quasicompactness of $\Lp$ coupled with the
absence of Jordan blocks implies that there exist numbers $\theta_j \in [0,1)$ and operators $\Pi_j$, 
$R : \cB \circlearrowleft$, $j=0, \ldots N$, with $\Pi_j^2 = \Pi_j$, 
$\Pi_j R = R \Pi_j = 0$ and $\Pi_j \Pi_k = \Pi_k \Pi_j = 0$ for $j \neq k$, such that
\begin{equation}
\label{eq:decomp}
\Lp = \sum_{j=0}^N e^{2 \pi i \theta_j} \Pi_j + R .
\end{equation}
Moreover, the spectral radius of $R$ is bounded by some $r<1$. 
We denote by $\bV_j$ the eigenspace associated with $\theta_j$ and set $\theta_0 = 0$ so
that $\bV_0$ denotes the space of invariant distributions in $\cB$ (which is nonempty by
Proposition~\ref{prop:quasi} since 1 is in the spectrum).

Due to \eqref{eq:decomp}, we may characterize the spectral projectors $\Pi_j$ by the following
limit,
\[
\Pi_j = \lim_{n \to \infty} \frac 1n \sum_{k=0}^{n-1} e^{-2\pi i \theta_j k} \Lp^k ,
\]
which converges in the uniform topology of $L(\cB, \cB)$.
The following lemma summarizes the main points.

\begin{lemma}
\label{lem:spec}
  Define $\bmu = \Pi_0 \bo$.
  \begin{itemize}
    \item[(i)]  All the elements of $\bV = \oplus_j \bV_j$ are Radon measures absolutely continuous with respect
    to $\bmu$.
    \item[(ii)]  There exist a finite number of $q_k \in \mathbb{N}$ such that 
    $\cup_{j = 0}^N \{ \theta_j \} = \cup_k \{ \frac{p}{q_k} : 0 < p \le q_k, p \in \mathbb{N} \}$.
    \item[(iii)]  The set of ergodic probability measures absolutely continuous with respect to $\bmu$
    form a basis for $\bV_0$.
  \end{itemize}
\end{lemma}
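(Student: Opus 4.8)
The plan is to follow closely the template from the contracting-map example in Section~\ref{ped}, adapted to the hyperbolic setting, relying throughout on the Lasota--Yorke estimates of Proposition~\ref{prop:ly} and the multiplier Lemma~\ref{lem:mult}. The three items will be proved in order, since (i) feeds into the proof of (iii), and (ii) is essentially independent.

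For item (i), I would first show that every $\mu \in \bV = \oplus_j \bV_j$ is a Radon measure. If $\mu \in \bV_j$, then $\Lp^n \mu = e^{2\pi i \theta_j n}\mu$, so $|\mu(\vf)| = |\Lp^n\mu(\vf)| = |\mu(\vf\circ T^n)|$ for all $n$. Bounding $|\mu(\vf\circ T^n)| \le \|\mu\|_{\cB}|\vf\circ T^n|_{C^1(\cW^u)}$ is useless because $\vf\circ T^n$ has exploding unstable derivatives; instead I would pair against the weak norm applied one stable leaf at a time. The point, exactly as in \eqref{eq:avg subtract}, is that $\vf\circ T^n$ has H\"older constant along stable leaves contracting like $\lambda^{\alpha n}$, so $|\mu(\vf)| \le \|\mu\|_s(|\vf|_{C^0} + \lambda^{\alpha n}H^\alpha_{\cW^s}(\vf))$ provided $\vf \in C^0(M)$ is, say, constant on unstable leaves — but a general test function is not, so one must combine this with the strong unstable norm to control variation across stable leaves. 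Letting $n\to\infty$ should yield $|\mu(\vf)| \le C\|\mu\|_{\cB}|\vf|_{C^0(M)}$, so $\mu$ extends to a bounded functional on $C^0(M)$, i.e. is a Radon measure. Absolute continuity with respect to $\bmu$: I would argue that $\bmu = \Pi_0\bo$ is a nonnegative measure (since $\Lp$ preserves positivity, and $\frac1n\sum\Lp^k\bo$ are probability densities weak-$*$ converging to $\bmu$), and then that any $\mu\in\bV$ is dominated by a multiple of $\bmu$ — this is where I expect the real work to lie. The standard route is to show that the support structure of $\bmu$ already captures all of $\bV$; concretely, one uses Lemma~\ref{lem:mult} to multiply elements of $\bV$ by $C^1$ functions staying inside $\bV\oplus R\cB$ and a Radon--Nikodym/density argument, or else one defers this to the proof that on unstable leaves $\bmu$ has conditional measures equal to arclength and the dynamics is forced.

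For item (ii), I would repeat the group argument from Section~\ref{ped} verbatim in spirit: the peripheral spectrum of $\Lp$ coincides with that of the Koopman operator $K\vf = \vf\circ T$ on a suitable space (by \cite[Theorem VI.7]{reed} and a Lasota--Yorke estimate for $K$ on $C^\alpha(\cW^s)$, analogous to the one displayed for the contracting map), and the eigenfunctions of $K$ of modulus $1$ form a multiplicatively closed set since $C^\alpha(\cW^s)$ is an algebra and $K$ is multiplicative; quasi-compactness forces this group to be finite, hence the $\theta_j$ are rational, and the eigenvalue group is a finite union of cyclic groups, giving the stated description with the $q_k$. One subtlety: I should check eigenfunctions are genuinely bounded (so products make sense), which follows from the same $C^0$-bound as in (i) applied on the Koopman side.

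For item (iii), the key structural input is that $\bmu$ is a physical/SRB measure — but more elementarily, since each $\mu\in\bV_0$ is an invariant Radon measure absolutely continuous with respect to $\bmu$, and $\bmu$ is a probability measure, the real-linear span of $\bV_0$ is a finite-dimensional space of finite signed invariant measures all $\ll\bmu$; standard ergodic-decomposition theory then says the extreme points of the convex set of invariant probability measures $\ll\bmu$ are exactly the ergodic ones, they are finite in number (by finite-dimensionality), mutually singular, and form a basis. The one thing to verify carefully is that $\bV_0$ is closed under taking real and imaginary parts and under the lattice operations implicitly used — i.e. that $\bV_0$ as a complex vector space is the complexification of a real space of signed measures — which follows because $\Lp$ and the projector $\Pi_0$ are real operators. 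The main obstacle across the whole lemma, I expect, is the absolute-continuity claim in (i): getting a clean domination $|\mu| \le C_\mu\,\bmu$ for $\mu\in\bV$ without already knowing the fine structure of $\bmu$ requires either a careful Perron--Frobenius positivity argument on the cone of measures in $\cB$ or an appeal to the density-of-conditionals statement; I would set it up via the positivity-preserving property of $\Lp$ together with the uniform bound $\|\Lp^n\|_{\cB}\le C$ and extract $\bmu$ as a weak-$*$ limit, then use Lemma~\ref{lem:mult} to localize.
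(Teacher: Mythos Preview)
Your proposal has two genuine gaps, both of which the paper handles by a device you do not mention: representing an arbitrary $\mu\in\bV_j$ as $\Pi_j f$ for some \emph{bounded function} $f\in C^1(\cW^u)$. This is possible because $\bV_j$ is finite-dimensional and $\Pi_j C^1(\cW^u)$ is dense in it, hence equal to it.

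For item (i), your route to the Radon-measure bound (contracting the stable H\"older constant of $\vf\circ T^n$) can be made to work, but your approach to absolute continuity is vague precisely because you are trying to prove $|\mu|\le C_\mu\,\bmu$ starting from an abstract $\mu\in\cB$. Once you write $\mu=\Pi_j f$ with $|f|_\infty<\infty$, both statements are nearly immediate: for $\vf\ge 0$ one has $|\Lp^k f(\vf)|=|f(\vf\circ T^k)|\le |f|_\infty\,\bo(\vf\circ T^k)=|f|_\infty\,\Lp^k\bo(\vf)$ pointwise, so averaging gives $|\mu(\vf)|\le |f|_\infty\,\bmu(\vf)$. No Perron--Frobenius cone argument, no appeal to the structure of $\bmu$ on unstable leaves, and no localization via Lemma~\ref{lem:mult} is needed. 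You correctly identified this as the main obstacle but missed the simple resolution.

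For item (ii), your Koopman-duality argument from Section~\ref{ped} does not transfer directly, and the paper explicitly flags this: $\cB$ is merely embedded in $(C^1(\cW^s))^*$, not equal to it, so \cite[Theorem VI.7]{reed} does not identify the peripheral spectrum of $\Lp$ on $\cB$ with that of $K$ on $C^1(\cW^s)$. An eigenvalue of $K$ need not correspond to an eigenvector lying in $\cB$. The paper instead uses item (i) --- which you called ``essentially independent'' of (ii) --- to write $\mu=f_\mu\,\bmu$ with $f_\mu\in L^\infty(\bmu)$, observes that $f_\mu\circ T^{-1}=e^{2\pi i\theta_j}f_\mu$, and then shows (via $C^1$ approximation and Lemma~\ref{lem:mult}) that the measures $(f_\mu)^k\bmu$ lie in $\bV$, forcing $\{k\theta_j\}_k$ to be finite and hence $\theta_j\in\mathbb{Q}$. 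So the group structure is realized on the densities $f_\mu$, not on Koopman eigenfunctions in a predual.

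For item (iii) your outline is close in spirit to the paper's, but the step you flag --- closure of $\bV_0$ under restriction to invariant sets --- is exactly what the paper proves by approximating $\bo_A$ in $L^1(\bmu)$ by $C^1$ functions and applying Lemma~\ref{lem:mult}, rather than by abstract lattice arguments.
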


\begin{proof}
(i)
Choose $j \in [0, N]$ and let $\mu \in \bV_j$.  Since $C^1(\cW^u)$ is dense $\cB$ and $\Pi_j$ is a bounded linear operator, $\Pi_j C^1(\cW^u)$
is dense in $\bV_j$.  Since $\bV_j$ is finite dimensional, we have $\Pi_j C^1(\cW^u) = \bV_j$.

Thus there exists $f \in C^1(\cW^u)$ such that $\Pi_j f = \mu$.  Now for each $\vf \in C^1(\cW^s)$,
\[
|\mu(\vf)| = |\Pi_j f(\vf)| \le \lim_{n \to \infty} \frac 1n \sum_{k=0}^{n-1} | \Lp^k f(\vf)|
= \lim_{n \to \infty} \frac 1n \sum_{k=0}^{n-1} |f(\vf \circ T^k)| \le |f|_\infty |\vf|_\infty.
\]
Since $C^1(\cW^s)$ is dense in $C^0(M)$, $\mu$ can be extended to a bounded linear functional
on $C^0(M)$, i.e., $\mu$ is a Radon measure.  This applies, in particular, to $\bmu$.
Moreover, for any $\vf \ge 0$,
\[
|\mu(\vf)| \le \lim_{n \to \infty} \frac 1n \sum_{k=0}^{n-1} |\Lp^k f(\vf)|
\le \lim_{n \to \infty} \frac 1n \sum_{k=0}^{n-1} |f|_\infty \Lp^k \bo (\vf) \le |f|_\infty \bmu(\vf),
\]
so that $\mu$ is absolutely continuous with respect to $\bmu$, and its Radon-Nikodym derivative $f_\mu$
is in $L^\infty(\bmu)$.  


\smallskip
\noindent
(ii) Appealing to the dual of $\Lp$ as in Section~\ref{ped} is more involved in the present setting
since $\cB$ is not exactly the
dual of $C^1(\cW^s)$.  Instead, we prefer the following more
general argument, which uses item (i) of the lemma (see also \cite[Lemma~5.5]{demers liverani}).  
Fixing again $\theta_j$, we
choose $\mu  \in \bV_j$.  According to (i), $\mu = f_\mu \bmu$ for some $f_\mu \in L^\infty(\bmu)$.
Then for $\vf \in C^1(\cW^s)$, we have
\[
e^{2 \pi i \theta_j} \mu(\vf) = \Lp\mu(\vf) = \mu(\vf \circ T) = \int_M \vf \circ T \, f_\mu \, d\bmu
= \int_M \vf \, f_\mu \circ T^{-1} \, d\bmu.
\]
Since this holds for each $\vf \in C^1(\cW^s)$, we conclude that 
$f_\mu \circ T^{-1} = e^{2 \pi i \theta_j} f_\mu$, $\bmu$-almost everywhere.  Let
$f_{\mu,k} = (f_\mu)^k$.  Then $f_{\mu, k} \in L^\infty(\bmu)$ and
$f_{\mu,k} \circ T = e^{-2 \pi i \theta_j k} f_{\mu,k}$.

We claim that
$\mu_k = f_{\mu, k}\, \bmu \in \cB$.
To see this, fix $\ve > 0$ and choose $g \in C^1(M)$ such that $\bmu(|g - f_{\mu, k}|) \le \ve$.
By Lemma~\ref{lem:mult}, $g\bmu \in \cB$, and for $\vf \in C^1(\cW^s)$,
\[
\begin{split}
\lim_n \frac 1n & \sum_{\ell=0}^{n-1} e^{-2 \pi i \theta_j k \ell} \Lp^\ell (g \bmu)(\vf) - ((f_\mu)^k \bmu)(\vf) \\
& =  \lim_n \frac 1n \sum_{\ell=0}^{n-1} e^{-2 \pi i \theta_j k \ell} \bmu(g \, \vf \circ T^\ell) 
- e^{-2 \pi i \theta_j k} \bmu((f_{\mu,k} \, \vf \circ T^k) \\
& = \lim_n \frac 1n \sum_{\ell=0}^{n-1} e^{-2 \pi i \theta_j k \ell} \bmu((g - f_{\mu,k}) \, \vf \circ T^\ell) 
\le  \lim_n \frac 1n \sum_{\ell=0}^{n-1}  \bmu(|g - f_{\mu,k}| \circ T^{-\ell} ) |\vf|_\infty
\le \ve |\vf|_\infty.
\end{split}
\]
Since $f_\mu \neq 0$, this calculation shows that (a) 
$\lim_n \frac 1n \sum_{\ell=0}^{n-1} e^{-2 \pi i \theta_j k \ell} \Lp^\ell (g \bmu) \neq 0$, so that
$k \theta_j$ is in the spectrum of $\Lp$, and (b) $\mu_k = f_{\mu, k} \, \bmu$ can be approximated by
elements of $\bV$ and so must belong to $\bV$.  Since this is true for each $k$ and the peripheral
spectrum of $\Lp$ is finite, we must have $k \theta_j = 0$ (mod 1) for some $k$, proving the claim.

\smallskip
\noindent
(iii)  Let $\mu \in \bV_0$ and $f \in C^1(\cW^u)$ such that $\Pi_0f = \mu$.  Setting 
$f^+ = \max \{ f, 0 \}$ and $f^- = \max \{ -f, 0 \}$, we have $f^{\pm} \in C^1(\cW^u)$ and 
$f = f^+ - f^-$.  Then defining $\mu^{\pm} = \Pi_0 f^{\pm}$, we have $\mu^{\pm} \ge 0$ and
$\mu  = \mu^+ - \mu^-$,  so that $\bV_0$ is the span of a convex set of probability measures.

Next, suppose that $A \subset M$ is an invariant set with $\bmu(A)>0$.  For each $\ve >0$,
there exists $\vf_\ve \in C^1(M)$ such that $\bmu(|\bo_A - \vf_\ve|) < \ve$, where
$\bo_A$ is the indicator function of $A$.  Thus for each $\psi \in C^0(M)$, we have
\[
\bmu(\vf_\ve \, \psi \circ T^n) = \bmu(\bo_A \, \psi \circ T^n) + \mathcal{O}(\ve|\psi|_\infty)
= \bmu(\bo_A \, \psi) + \mathcal{O}(\ve|\psi|_\infty)
= \bmu(\vf_\ve \, \psi) + \mathcal{O}(\ve|\psi|_\infty),
\]
where we have used the invariance of $A$ for the second equality.  On the other hand, letting
$\mu_\ve = \vf_\ve \bmu$, we have $\mu_\ve \in \cB$ by Lemma~\ref{lem:mult}, and
\[
(\Pi_0\mu_\ve)(\psi) = \lim_{n\to \infty} \frac 1n \sum_{k=0}^{n-1} \Lp^n\mu_\ve(\psi)
= \lim_{n\to \infty} \frac 1n \sum_{k=0}^{n-1} \bmu(\vf_\ve \, \psi \circ T^k)
= \bmu(\vf_\ve \, \psi) + \mathcal{O}(\ve|\psi|_\infty)  .
\]
Thus, since $\mu_A(\cdot) := \bmu(\bo_A \, \cdot \, )$ can be approximated by elements of $\bV_0$, it
must belong to $\bV_0$.  Since this is true for each invariant set $A$, and $\bV_0$ is finite dimensional,
there can be only finitely many invariant sets of positive $\bmu$ measure.  Let $\{ A_i \}$ 
denote the minimal (finite) partition of $M$ into invariant sets of positive $\bmu$ measure. It follows that 
any $\mu \in \bV_0$ can be written as a linear combination of the measures $\mu_{A_i}$.
\end{proof}

Many other properties regarding the characterization of physical measures and the ergodic decomposition
with respect to Lebesgue measure can be derived in this general setting.  We refer the interested
reader to \cite[Lemma~5.7]{demers liverani}.  
As an example, we present one such result here, whose proof is similar to that of Lemma~\ref{lem:spec}(iii).

\begin{lemma}
\label{lem:physical}
$T$ admits only finitely many physical measures and they belong to $\mathbb{V}_0$.
\end{lemma}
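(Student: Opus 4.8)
The plan is to follow the template of the proof of Lemma~\ref{lem:spec}(iii): I would first show that every physical measure belongs to the finite-dimensional space $\bV_0$, and then read off finiteness from the structure of $\bV_0$.

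For the main step, let $\mu$ be a physical measure with basin $B\subseteq M$, $m(B)>0$, and fix $\psi\in C^0(M)$. The defining convergence $\frac1n S_n\psi(x)\to\mu(\psi)$ for $x\in B$, together with the bound $|\frac1n S_n\psi|\le|\psi|_\infty$ and dominated convergence, gives
\[
\lim_{n\to\infty}\frac1n\sum_{k=0}^{n-1}\Lp^k(\bo_B\, m)(\psi)
=\lim_{n\to\infty}\int_B\frac1n S_n\psi\,dm=m(B)\,\mu(\psi),
\]
using $\Lp^k(\bo_B m)(\psi)=\int_B\psi\circ T^k\,dm$. Since $\bo_B m\notin\cB$, I would then approximate $\bo_B$ in $L^1(m)$ by functions $\vf_\ve\in C^1(M)$ with $m(|\bo_B-\vf_\ve|)<\ve$; by Lemma~\ref{lem:embeddings} the measure $\vf_\ve m$ lies in $\cB$, and for every $k$ and $\psi\in C^0(M)$,
\[
\bigl|\Lp^k(\vf_\ve m)(\psi)-\Lp^k(\bo_B m)(\psi)\bigr|
=\Bigl|\int_M(\vf_\ve-\bo_B)\,\psi\circ T^k\,dm\Bigr|\le\ve\,|\psi|_\infty .
\]
Since $\Pi_0=\lim_n\frac1n\sum_{k=0}^{n-1}\Lp^k$ converges in $L(\cB,\cB)$, and the pairing $g\mapsto g(\psi)$ is $\cB$-continuous for $\psi\in C^1(\cW^s)$ (Lemma~\ref{lem:embeddings}) with $C^1(\cW^s)$ dense in $C^0(M)$, letting $n\to\infty$ yields $|\Pi_0(\vf_\ve m)(\psi)-m(B)\mu(\psi)|\le\ve|\psi|_\infty$ for all $\psi\in C^0(M)$. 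Each $\Pi_0(\vf_\ve m)$ lies in $\bV_0$, which by Lemma~\ref{lem:spec}(i) is a finite-dimensional space of Radon measures, hence closed in $(C^0(M))^*$; letting $\ve\to0$ then places $m(B)\mu$, and so $\mu$, in $\bV_0$.

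Given this, finiteness is immediate in the present setting: by Theorem~\ref{thm:spec}(a), $\Lp$ has a spectral gap, so $\bV_0$ is one-dimensional and $\bmu$ is its only invariant probability measure; thus $\bmu$ is the unique physical measure of $T$. (In a general hyperbolic setting one would instead combine the inclusion $\mu\in\bV_0$ with Lemma~\ref{lem:spec}(iii): every physical measure is then a $\bmu$-absolutely continuous invariant probability measure, hence a convex combination of the finitely many ergodic measures $\mu_{A_1},\dots,\mu_{A_L}$ of the minimal partition; since distinct physical measures have disjoint basins, it remains only to exclude nontrivial combinations, which follows from the absolute continuity of the conditional measures of $\bmu$ on unstable leaves, forcing $m$-a.e.\ point to be generic for a single $\mu_{A_i}$.)

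The step that will require the most care is the first one, for the same reason as in Lemma~\ref{lem:spec}: it mixes two topologies, namely the $\cB$-norm, in which $\Pi_0$ is continuous and the approximants $\vf_\ve m$ are bounded, and that of $(C^0(M))^*$, in which the identity $\Pi_0(\vf_\ve m)\to m(B)\mu$ takes place; it is the finite-dimensionality of $\bV_0$ from Lemma~\ref{lem:spec}(i) that reconciles them. The purely dynamical input (dominated convergence on the basin) is elementary, and in the general hyperbolic setting the one genuinely substantive extra ingredient, hidden in the parenthetical remark above, is the absolute continuity of $\bmu$ along unstable manifolds, which lies beyond mere quasi-compactness of $\Lp$.
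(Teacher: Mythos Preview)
Your argument is correct and follows essentially the same route as the paper: approximate $\bo_B$ by $\vf_\ve\in C^1(M)$, compute $\Pi_0(\vf_\ve m)(\psi)$ via the Ces\`aro averages, and use the definition of the basin together with the $L^1(m)$-approximation to see that $\Pi_0(\vf_\ve m)\to m(B)\mu$ in $(C^0(M))^*$, so that $\mu\in\bV_0$ by finite-dimensionality.

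The only point worth flagging is the finiteness step. You invoke Theorem~\ref{thm:spec}(a), which in the paper's development is proved \emph{after} this lemma (and the lemma is presented as holding in the general setting, before any spectral gap is known). The paper's own argument is simpler and avoids this forward reference: distinct physical measures are ergodic, hence mutually singular, hence linearly independent as vectors in $\bV_0$; since $\dim\bV_0<\infty$ by quasi-compactness, there are only finitely many. Your parenthetical alternative gets there too, but brings in more machinery (disjoint basins, absolute continuity of $\bmu$ on unstable leaves) than is needed.
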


\begin{proof}
Suppose $\mu$ is a physical measure and let $B \subset M$ with $m(B)>0$ be such that
\[
\lim_{n \to \infty} \frac 1n \sum_{k=0}^{n-1} \psi \circ T^k(x) = \mu(\psi) , \qquad \forall \; x \in B, \; \psi \in C^0(M).
\]
Let $\ve > 0$ and take $\vf_\ve \in C^1(M)$ such that $m(|\bo_B - \vf_\ve|) \le \ve$. Define $m_\ve = \vf_\ve m$.
Then for $\psi \in C^0(M)$,
\[
\begin{split}
\Pi_0 m_\ve(\psi) & = \lim_{n \to \infty} \frac 1n \sum_{k=0}^{n-1} \Lp^k m_\ve(\psi) = \lim_{n \to \infty} \frac 1n
\sum_{k=0}^{n-1} m_\ve(\psi \circ T^k) \\
& =  \lim_{n \to \infty} \frac 1n \sum_{k=0}^{n-1} m(\bo_B \psi \circ T^k) + \mathcal{O}(\ve |\psi|_{C^0}) 
\; =  \; m(B) \mu(\psi) + \mathcal{O}(\ve |\psi|_{C^0}) \, .
\end{split}
\]
Since $\Pi_0m_\ve \in \mathbb{V}_0$, we see that $\mu$ can be approximated by elements of $\mathbb{V}_0$, and
therefore belongs to $\mathbb{V}_0$.  Since $\mathbb{V}_0$ is finite dimensional and a physical measure is necessarily
ergodic, there can be only finitely many physical measures for $T$.
\end{proof}

Since our Baker's transformation $T$
is mixing, we proceed to prove that $\Lp$ has a spectral gap.

\begin{lemma}
\label{lem:one}
$T$ has a single ergodic component with positive $\bmu$ measure.
\end{lemma}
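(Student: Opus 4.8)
The statement is equivalent to the ergodicity of the invariant measure $\bmu = \Pi_0\bo$. Indeed, by Lemma~\ref{lem:spec}(iii) the space $\bV_0$ is spanned by the measures $\mu_{A_i} := \bmu(\bo_{A_i}\,\cdot\,)$ attached to a minimal finite partition $\{A_i\}$ of $M$ into $T$-invariant sets of positive $\bmu$-measure, so ``a single ergodic component of positive measure'' means precisely that this partition is trivial mod $\bmu$, i.e. $\dim\bV_0 = 1$. The plan is therefore to prove that $(T,\bmu)$ is ergodic by a Hopf-type argument exploiting the product structure of the Baker's map; this needs two preliminary facts: a description of $\bmu$, and the invertibility of $T$ where $\bmu$ lives.

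For the first, the pointwise formula \eqref{eq:trans def} gives that $\Lp^k\bo$ is the density $(\kappa\lambda)^{-k}\,\bo_{T^k M}$ with respect to $m$, which is constant along every leaf of $\cW^u$ and supported on $T^k M$; since $m(T^k M) = (\kappa\lambda)^k \to 0$ and the sets $T^k M$ decrease, a short computation with $\bmu = \lim_n \tfrac1n\sum_{k=0}^{n-1}\Lp^k\bo$ shows that $\bmu$ is a $T$-invariant Borel probability measure supported on the attractor $\Lambda := \bigcap_{k\ge 0}T^k M$, whose conditional measures on the leaves of $\cW^u$ are normalized arclength. As $\Lambda$ is literally a Cartesian product $[0,1]\times C$ (with $C$ a self-similar Cantor set when $\lambda < 1/\kappa$, and $C=[0,1]$ when $\lambda = 1/\kappa$), this says $\bmu = m_{[0,1]}\otimes\nu$ for some probability $\nu$ on $C$. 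For the second, since the rectangles $R_i$ partition $M$ and their images $T(R_i)$ are pairwise disjoint, $T$ is a bijection (mod $0$) of $M$ onto $TM$; hence $T^{-1}$ is well defined on $TM \supseteq \Lambda$ and $T\colon\Lambda\to\Lambda$ is an invertible $\bmu$-preserving transformation.

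Now apply the Birkhoff theorem on $(\Lambda,T,\bmu)$ together with the Hopf argument. Fix $\psi\in C^0(M)$ and let $\psi^+,\psi^-$ be the forward and backward Birkhoff averages, which exist and coincide $\bmu$-a.e. Because $T$ maps vertical segments to vertical segments and contracts the vertical direction by $\lambda<1$, any two points on a common leaf of $\cW^s$ have forward orbits remaining on common leaves of $\cW^s$ and converging to one another, so $\psi^+$ is constant along the leaves of $\cW^s$. For the backward direction one uses the defining combinatorics of the construction: which image strip $T(R_i)$ a point lies in depends only on its vertical coordinate, so for two points on a common leaf of $\cW^u$ all their backward iterates again lie on a common leaf of $\cW^u$, at distance contracted by $\kappa^{-n}$; hence $\psi^-$ is constant along the leaves of $\cW^u$. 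Thus $\psi^+ = \psi^-$ agrees $\bmu$-a.e. with a function of the vertical coordinate alone and also $\bmu$-a.e. with a function of the horizontal coordinate alone; since $\bmu = m_{[0,1]}\otimes\nu$, Fubini forces $\psi^+ = \bmu(\psi)$ $\bmu$-a.e. As $\psi$ was arbitrary, $\bmu$ is ergodic, which by the first paragraph completes the proof. (Equivalently, $\pi(x,y)=x$ intertwines $T$ with the piecewise-linear expanding map $x\mapsto\kappa x \bmod 1$ of $[0,1]$, pushing $\bmu$ to Lebesgue; $(\Lambda,T,\bmu)$ is its natural extension, and the natural extension of an exact, hence mixing, endomorphism is itself mixing.)

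I expect the only real friction to be measure-theoretic bookkeeping: justifying that the constancy of the unstable densities of each $\Lp^k\bo$ passes to the limit as a statement about the disintegration of $\bmu$ over $\cW^u$ (in particular that $\cB_w$-convergence suffices), and performing the standard saturation needed to upgrade the $\bmu$-a.e. assertions in the Hopf argument to genuine leafwise constancy before invoking Fubini. All the dynamical input — the product/affine structure, the contraction rates $\lambda$ and $\kappa^{-1}$, and the invertibility on $\Lambda$ — is immediate from the definition of $T_{\kappa,\lambda}$, and the reduction to ergodicity is already contained in Lemma~\ref{lem:spec}.
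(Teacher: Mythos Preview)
Your argument is correct and is a recognizable Hopf argument, but it follows a different route from the paper's. The paper does not first establish the product form $\bmu = m_{[0,1]}\otimes\nu$; that fact is proved only later, in the proof of Theorem~\ref{thm:spec}(b), and independently of the present lemma, so there is no circularity in your using it here. Instead, the paper works directly with an arbitrary $T$-invariant set $A$ of positive $\bmu$-measure, saturates $A$ first by unstable leaves to obtain $A_u$ and then by stable leaves to obtain $A_u^s = M$, and uses the forward and backward ergodic theorems (together with approximation of $\bo_A$ by continuous functions) to show that each saturation step adds only a $\bmu$-null set. Your version has the advantage of being the textbook Hopf argument, and your natural-extension remark even yields mixing in one stroke; the paper's version avoids both the preliminary description of $\bmu$ and the Fubini step, using only the product structure of the foliations rather than of the measure. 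One small slip: the assertion $m(T^kM) = (\kappa\lambda)^k \to 0$ fails in the area-preserving case $\lambda = 1/\kappa$, but since you already record $C = [0,1]$ there, the argument is unaffected.
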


\begin{proof}
Let $A = A_i$ be an invariant set with positive $\bmu$ measure on which $(T, \bmu)$ is ergodic. 
For $x \in M$, let $U(x)$ denote the
element of $\cW^u$ containing $x$.  Define $A_u = \cup_{x \in A} U(x)$.  
Clearly, $T^{-1}A_u \subset A_u$.  We claim that
$\bmu(A_u \setminus A) = 0$.

To see this, for each $\psi \in C^0(M)$, the backward ergodic average of $\psi$ on $A_u$ equals the ergodic average
of $\psi$ on $A$.  Thus for $x \in A_u$,
\begin{equation}
\label{eq:ergodic}
\frac 1n \sum_{k=0}^{n-1} \psi \circ T^{-k}(x) \to \frac{\bmu(\psi \bo_A)}{\bmu(A)} .
\end{equation}
Now $\bmu(\psi \circ T^{-k} \, \bo_{A_u}) \le \bmu(\psi \, \bo_{A_u})$
due to the inclusion $T^{-1}A_u \subset A_u$.  Thus integrating over both sides of \eqref{eq:ergodic}
yields $\bmu(\psi \bo_{A_u}) \ge \frac{\bmu(\psi \bo_A)}{\bmu(A)} \bmu(A_u)$.
For each $\ve >  0$, we choose $\psi_\ve \in C^0(M)$ such that 
$\bmu(|\bo_A - \psi_\ve|) < \ve$.  Then the above inequality yields $\bmu(A) \ge \bmu(A_u) + \mathcal{O}(\ve)$, which proves
the claim since $A \subseteq A_u$.

Next, let $A_u^s = \cup_{x \in A_u} W(x)$, where $W(x)$ is the element of $\cW^s$ containing $x$.  Due to the product structure
of $\cW^s$ and $\cW^u$, it is clear that $A_u^s = M$.  Repeating the argument above, we can show that
$\bmu(A_u^s \setminus A_u) = 0$ (using the forward ergodic average, and integrating over $A_u^s$ rather than $A_u$).
Thus $\bmu(M \setminus A) = 0$, so that $T$ has a single ergodic component with positive $\bmu$ measure.
\end{proof}

\begin{cor}
\label{cor:spectral gap}
$\Lp$ has a spectral gap on $\cB$.
\end{cor}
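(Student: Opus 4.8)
The plan is to show that the peripheral spectrum of $\Lp$ reduces to the single, simple eigenvalue $1$; combined with the quasi-compactness established in Proposition~\ref{prop:quasi} this is precisely a spectral gap. Recall the decomposition \eqref{eq:decomp}, $\Lp = \sum_{j=0}^N e^{2\pi i\theta_j}\Pi_j + R$ with the spectral radius of $R$ bounded by some $r<1$. Thus it is enough to prove (a) $\dim\bV_0 = 1$, and (b) $N = 0$, i.e.\ $\theta_j = 0$ for every $j$.

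Step (a) is immediate from what has already been proved. First, $\bmu = \Pi_0\bo$ is a probability measure, since $\bmu(\bo) = \lim_n \frac1n\sum_{k=0}^{n-1}\Lp^k\bo(\bo) = 1$, and it is $\Lp$-invariant because $\Lp\Pi_0 = \Pi_0$. By Lemma~\ref{lem:spec}(iii) and the explicit description in its proof, $\bV_0$ is spanned by the measures $\bmu(\bo_{A_i}\,\cdot\,)$, where $\{A_i\}$ is the minimal partition of $M$ into invariant sets of positive $\bmu$-measure; Lemma~\ref{lem:one} says this partition is trivial, so $\bV_0 = \mathrm{span}\{\bmu\}$ and $1$ is a simple eigenvalue.

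For step (b) the key input is that $(T^q,\bmu)$ is ergodic for every $q\ge 1$. To obtain this I would note that $T^q$ is again a Baker's transformation in the family of Section~\ref{bake} (with parameters $(\kappa^q,\lambda^q)$), having the same stable and unstable leaves $\cW^s,\cW^u$ and transfer operator $\Lp^q$, so that Propositions~\ref{prop:ly}--\ref{prop:quasi} and Lemmas~\ref{lem:spec}, \ref{lem:one} apply to it verbatim. In particular the space of $\Lp^q$-invariant distributions in $\cB$ is one-dimensional, spanned by $\nu_q := \Pi_0^{(q)}\bo$ (the eigenprojection of $\Lp^q$ at $1$ applied to $\bo$), and $(T^q,\nu_q)$ is ergodic; since $\bmu$ is $\Lp^q$-invariant with $\bmu(\bo)=1=\nu_q(\bo)$, necessarily $\bmu = \nu_q$, whence $(T^q,\bmu)$ is ergodic. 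Now fix $j$ with $\theta_j\neq 0$ and pick $0\neq\mu\in\bV_j$. By Lemma~\ref{lem:spec}(i) we may write $\mu = f_\mu\bmu$ with $f_\mu\in L^\infty(\bmu)$, and, exactly as in the proof of Lemma~\ref{lem:spec}(ii), $f_\mu\circ T^{-1} = e^{2\pi i\theta_j}f_\mu$ $\bmu$-a.e. Writing $\theta_j = p/q$ with $p,q\in\mathbb{N}$ (Lemma~\ref{lem:spec}(ii)), this gives $f_\mu\circ T^{-q} = f_\mu$, so $f_\mu$ is $T^q$-invariant and hence constant $\bmu$-a.e.\ by the ergodicity just established; but then $e^{2\pi i\theta_j}f_\mu = f_\mu$ with $f_\mu\neq 0$ forces $\theta_j = 0$, a contradiction. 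Therefore $N = 0$.

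Putting the two steps together, $\Lp = \Pi_0 + R$ with $\Pi_0$ the rank-one projection onto $\mathrm{span}\{\bmu\}$ and the spectral radius of $R$ strictly less than $1$, so $\mathrm{spec}(\Lp)\subset\{1\}\cup\{z:|z|\le r\}$, which is a spectral gap. I expect the only point requiring real care to be the bookkeeping that licenses applying the earlier lemmas to $T^q$ (verifying that $T^q$ is a Baker's transformation of the same type with unchanged invariant foliations, and that $\bmu$ is indeed its invariant distribution in $\cB$); alternatively, one may sidestep the total-ergodicity argument entirely by invoking that $(T,\bmu)$ is mixing (indeed Bernoulli), which already excludes nonconstant unimodular Koopman eigenfunctions and so yields $\theta_j = 0$ directly.
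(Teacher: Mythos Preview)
Your proof is correct and follows essentially the same route as the paper: both hinge on the observation that $T^q = T_{\kappa^q,\lambda^q}$ is again a Baker's transformation in the same family, so Lemmas~\ref{lem:spec} and~\ref{lem:one} apply to it and force the $\Lp^q$-invariant space in $\cB$ to be one-dimensional. The only cosmetic difference is that the paper argues directly that any $\mu\in\bV_\theta$ satisfies $\Lp^q\mu=\mu$ and hence must be a scalar multiple of $\bmu$, whereas you pass through the density $f_\mu$ and use ergodicity of $(T^q,\bmu)$ to conclude $f_\mu$ is constant; these are equivalent reformulations of the same fact.
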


\begin{proof}
From Lemma~\ref{lem:one}, we see that the eigenspace $\bV_0$ is one-dimensional for any $T = T_{\kappa, \lambda}$.
Now suppose the peripheral spectrum contains an eigenvalue $e^{2 \pi i \theta}$, and let $\mu \in \bV_\theta$ be a probability 
measure.    By Lemma~\ref{lem:spec}(ii),
$\theta = p/q$, for some $p, q \in \mathbb{N}$.  
Thus $\Lp^q \mu = \mu$, and so $\mu$ is an invariant measure for $T^q$.  But $T_{\kappa, \lambda}^q = T_{\kappa^q, \lambda^q}$
is simply another Baker's map of the type described in Section~\ref{bake}, and so Lemmas~\ref{lem:spec} and \ref{lem:one}
apply to $T^q$ as well. It follows that $\mu = \bmu$ and $\theta = 0$, so 1 is the only element of the peripheral spectrum for $\Lp$,
and it is a simple eigenvalue.  The quasi-compactness of $\Lp$ implies that the spectrum of $\Lp$
outside the disk of radius $\rho$ is finite, and so
$\Lp$ has a spectral gap.
\end{proof}

\begin{proof}[Proof of Theorem~\ref{thm:spec}]
The first statement and item (a) of the theorem are Proposition~\ref{prop:quasi} and 
Corollary~\ref{cor:spectral gap}.  

For item (b), Lemma~\ref{lem:spec} proves that $\bmu$ is a measure.  
The fact that its conditional measures on unstable leaves are equal to arclength 
follows by applying \eqref{eq:strong unstable}
to $\bmu$, which implies $\| \bmu \|_u = 0$.  Furthermore, letting $K(\cW^u)$ denote
the set of bounded, measurable functions on $M$ which are constant on unstable leaves,
we note that $\Lp(K(\cW^u)) \subset K(\cW^u)$ and $K(\cW^u) \subset C^1(\cW^u) \subset \cB$.
Thus $\bmu$ lies in the closure of this set in the $\| \cdot \|_{\cB}$ norm and so has constant conditional
densities on unstable leaves.
Finally, the claim about $\mu_0$ being the unique physical measure for the system follows from
Lemma~\ref{lem:physical} and Lemma~\ref{lem:one}.

The convergence for item (c) follows immediately from the decomposition \eqref{eq:decomp}
and the existence of a spectral gap.
\end{proof}


\subsection{Proofs of limit theorems}
\label{limit proofs}

In this section, we outline how Theorem~\ref{thm:limit} follows from the established
spectral picture using standard arguments.  
Let $g \in C^1(M)$ and define $S_ng = \sum_{j =0}^{n-1} g \circ T^j$.
We define the generalized transfer operator $\Lp_g$ on $\cB$ by,
$\Lp_g f(\psi) = f (e^g \psi \circ T)$ for all $f \in \cB$.  It is then immediate that
\[
\Lp_g^n f(\psi) = f(e^{S_ng} \psi \circ T^n), \;\;\; \mbox{for all $n \ge 0$.}
\]
The main element in the proofs of the limit theorems is that $\Lp_{zg}$, $z \in \mathbb{C}$, is
an analytic perturbation of $\Lp = \Lp_0$ for small $|z|$.
\begin{lemma}
\label{lem:analytic}
For $g \in C^1(M)$, the map $ z \mapsto \Lp_{zg}$ is analytic for all $z \in \mathbb{C}$.
\end{lemma}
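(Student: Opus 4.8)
The plan is to exhibit a power series, convergent in the operator norm of $L(\cB,\cB)$ for every $z\in\mathbb{C}$, whose sum is $\Lp_{zg}$; this establishes entire analyticity in one stroke. The starting observation is that for $f\in\cB$ and $\psi\in C^1(\cW^s)$ one has $\Lp_{zg}f(\psi)=f(e^{zg}\,\psi\circ T)=\Lp(e^{zg}f)(\psi)$, so that $\Lp_{zg}=\Lp\circ M_{e^{zg}}$, where $M_h$ denotes multiplication by $h$. Since $e^{zg}\in C^1(M)$, the multiplier Lemma~\ref{lem:mult} (whose proof goes through verbatim for complex-valued $C^1(M)$ multipliers) together with Lemma~\ref{lem:cont} already shows $\Lp_{zg}$ is bounded on $\cB$. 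Expanding $e^{zg}=\sum_{k\ge0}z^kg^k/k!$ suggests the candidate series
\[
\Lp_{zg}\;=\;\sum_{k=0}^{\infty}\frac{z^k}{k!}\,\Lp^{(k)},\qquad \Lp^{(k)}f:=\Lp(g^kf).
\]

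First I would verify operator-norm convergence. Each $\Lp^{(k)}$ is bounded on $\cB$: applying Lemma~\ref{lem:mult} to the product $g^k\in C^1(M)$ and then Lemma~\ref{lem:cont} gives $\|\Lp^{(k)}f\|_{\cB}\le 3\,C_\Lp\,|g^k|_{C^1(M)}\,\|f\|_{\cB}$, where $C_\Lp$ is the operator norm of $\Lp$ on $\cB$. An elementary estimate for the $C^1$ norm of a power yields $|g^k|_{C^1(M)}\le(k+1)\,|g|_{C^1(M)}^k$, hence $\|\Lp^{(k)}\|_{L(\cB)}\le 3C_\Lp(k+1)|g|_{C^1(M)}^k$ and
\[
\sum_{k=0}^{\infty}\frac{|z|^k}{k!}\,\|\Lp^{(k)}\|_{L(\cB)}\;\le\;3C_\Lp\sum_{k=0}^{\infty}\frac{(k+1)\,\bigl(|z|\,|g|_{C^1(M)}\bigr)^k}{k!}\;<\;\infty
\]
for every $z\in\mathbb{C}$, the factorial beating the polynomial and geometric factors. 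Thus the partial sums are Cauchy in $L(\cB,\cB)$ and converge to an entire operator-valued function $z\mapsto\cL_z$.

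It then remains to identify $\cL_z$ with $\Lp_{zg}$. For $f\in C^1(\cW^u)$ and $\psi\in C^1(\cW^s)$ the scalar series $\sum_k\frac{z^k}{k!}\Lp^{(k)}f(\psi)=\sum_k\frac{z^k}{k!}\int_M f\,g^k\,(\psi\circ T)\,dm$ converges to $\int_M f\,e^{zg}\,(\psi\circ T)\,dm=\Lp_{zg}f(\psi)$, since $\sum_k z^kg^k/k!$ converges uniformly on the compact set $M$ and $f$ is bounded; while by the operator-norm convergence of the previous step the same partial sums converge to $\cL_zf$ in $\cB$, hence pair with $\psi$ to give $\cL_zf(\psi)$. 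Because $C^1(\cW^s)$ separates points of $\cB$ — this is the injectivity of the embedding $\cB\hookrightarrow(C^1(\cW^s))^*$ from Lemma~\ref{lem:embeddings} — we conclude $\cL_zf=\Lp_{zg}f$ for $f\in C^1(\cW^u)$, and then for all $f\in\cB$ by density and continuity of both operators. This gives analyticity of $z\mapsto\Lp_{zg}$ on all of $\mathbb{C}$.

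The only real work is in the second paragraph, namely checking that the multiplier bound can be iterated with controlled dependence on $k$, i.e. that $|g^k|_{C^1(M)}$ grows at most geometrically up to a polynomial factor — comfortably dominated by $k!$. The rearrangement of $e^{zg}$ and the identification of the limit are soft, resting only on boundedness of $\Lp$ and $M_h$ on $\cB$ and on $\cB\hookrightarrow(C^1(\cW^s))^*$. (Alternatively one could prove complex differentiability directly, with $\tfrac{d}{dz}\Lp_{zg}=\Lp(g\,e^{zg}\,\cdot\,)$, by showing $\bigl|h^{-1}(e^{(z+h)g}-e^{zg})-g\,e^{zg}\bigr|_{C^1(M)}\to0$ as $h\to0$; but the power-series argument yields entire analyticity immediately.)
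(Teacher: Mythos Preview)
Your proof is correct and follows essentially the same approach as the paper: define $\Lp^{(k)}f=\Lp(g^kf)$, bound its norm via Lemma~\ref{lem:mult}, and identify the resulting operator-valued power series with $\Lp_{zg}$. The only minor difference is that the paper uses the sharper submultiplicativity $|g^k|_{C^1(M)}\le |g|_{C^1(M)}^k$ (valid since $|\cdot|_{C^1(M)}=|\cdot|_{C^0}+H^1(\cdot)$ is submultiplicative), whereas you use $(k+1)|g|_{C^1(M)}^k$; both are dominated by $k!$, so this is immaterial.
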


\begin{proof}
The lemma will follow once we show that
our strong norm $\| \cdot \|_{\cB}$ is continuous with respect to multiplication by $e^{zg}$.
This is straightforward using Lemma~\ref{lem:mult}.

Define the operator $\pa_n f = \Lp(g^n f)$, for $f \in \cB$.  Then Lemma~\ref{lem:mult} implies
\[
\| \pa_n(f) \|_{\cB} = \| \Lp (g^n f ) \|_{\cB} \le 3 \| \Lp \|  \, \| f \|_{\cB} \, |g^n|_{C^1(M)},
\]
and $|g^n|_{C^1(M)} \le (|g|_{C^1(M)})^n$.
This allows us to conclude that the operator 
$\sum_{n=0}^\infty \frac{z^n}{n!} \pa_n$ is well-defined on $\cB$ and equals $\Lp_{zg}$,
\[
\sum_{n=0}^\infty \frac{z^n}{n!} \pa_nf (\psi) = f\left( \sum_{n=0}^\infty \frac{z^n}{n!} g^n \cdot \psi \circ T \right) = f (e^{zg} \psi) = \Lp_{zg}f(\psi) , \;\; \; \mbox{for } \psi \in C^1(\cW^s) ,
\] 
since we know the sum converges in the $\| \cdot \|_{\cB}$ norm.
\end{proof}

With the analyticity of $z \mapsto \Lp_{zg}$ established, it follows from analytic
perturbation theory \cite{kato} that both the discrete
spectrum and the corresponding spectral projectors of $\Lp_{zg}$ vary smoothly with $z$.  Thus, since $\Lp_0$ has
a spectral gap, then so does $\Lp_{zg}$ for $z \in \mathbb{C}$ sufficiently close to 0.

At this point, the proofs of Theorem~\ref{thm:limit}(a), (b) and (c) can proceed verbatim as in
\cite[Section 6]{dz1} with only slight modifications for notation.


\section{Singular Limit $\lambda \to 0$}
\label{singular}

In this section, we prove the results stated in Section~\ref{sec:sing}.  We recall the notation
introduced there for a sequence of Baker's maps $(T_\lambda)_{\lambda \in [0,\lambda_0]}$
such that $\cap_{\lambda \in [0, \lambda_0]} T_\lambda(R_i) = U_i$ for a fixed
choice of $U_i$, $i = 1, \ldots, \kappa$.  For the remainder of this section, we fix such a family
of maps and prove the relevant results.


\subsection{Standard pairs and averages}
\label{sec:pairs}

For any $f \in \cB$ and $W\in \cW^s$, the distribution $D^1_W(f)$ as defined in \eqref{eq:distribution} exists
and is well-defined.  Thus we may define a function $\of$ by
\begin{equation}
\label{eq:avg dist}
\of(x) = \langle D^1_{W_x}(f), \bo \rangle = \int_{W_x} f \, dm_{W_x}, 
\end{equation}
where $W_x$ is the element of $\cW^s$ containing $x$, and the integral representation is valid for any $f \in C^1(\cW^u)$.  
Since $\of$ is constant on stable leaves, we also denote
$\of(x)$ by $\of_W$, when convenient, where $W = W_x$. 

\begin{lemma}
\label{lem:avg}
If $f \in \cB$, then $\of \in C^\beta(\cW^u)$ and 
\[
|\of|_{C^0(\cW^u)} \le |f|_w \le \| f \|_s, \qquad
\sup_{U \in \cW^u} H^\beta_U(\of) \le \| f \|_u .
\]  
Similarly, if $f \in C^1(\cW^u)$, then $\of \in C^1(\cW^u)$ and $|\of|_{C^1(\cW^u)} \le |f|_{C^1(\cW^u)}$.
\end{lemma}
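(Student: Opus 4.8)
The plan is to establish all the bounds first for $f \in C^1(\cW^u)$, where $\of$ is given by the explicit formula $\of(x) = \int_{W_x} f \, dm_{W_x}$ (this simultaneously proves the final ``similarly'' assertion), and then to pass to general $f \in \cB$ by density together with the $|\cdot|_w$-continuity of $f \mapsto D^1_W(f)$ established in Lemma~\ref{lem:embeddings}. The running observation that makes everything work is that the constant function $\bo$ is an admissible test function for all of the relevant norms: $|\bo|_{C^1(W)} = 1$, hence also $|\bo|_{C^\alpha(W)} \le 1$, and the pair $(\bo,\bo)$ on any two leaves $W_1,W_2$ satisfies $d_0(\bo,\bo) = 0$.

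\emph{Step 1: bounds for $f \in C^1(\cW^u)$.} Applying the definitions of $|\cdot|_w$ and $\|\cdot\|_s$ with the test function $\bo$ gives, for every $x$,
\[
|\of(x)| = \Big| \int_{W_x} f \, \bo \, dm_{W_x} \Big| \le |f|_w \le \|f\|_s ,
\]
where the middle inequality uses that $|\vf|_{C^\alpha(W)} \le |\vf|_{C^1(W)}$ for curves of length $\le 1$, so that the supremum defining $|f|_w$ is taken over a subfamily of the test functions defining $\|f\|_s$. Taking the supremum over $x \in M$ yields $|\of|_{C^0(\cW^u)} \le |f|_w \le \|f\|_s$. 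For the H\"older constant along an unstable leaf $U$: given $x,y \in U$, the stable leaves $W_x,W_y$ are distinct with $d(W_x,W_y) = d(x,y)$, and since $d_0(\bo,\bo) = 0$ the strong unstable norm applies directly to give
\[
|\of(x) - \of(y)| = \Big| \int_{W_x} f \, \bo \, dm_{W_x} - \int_{W_y} f \, \bo \, dm_{W_y} \Big| \le d(x,y)^\beta \, \|f\|_u ,
\]
so $H^\beta_U(\of) \le \|f\|_u$ uniformly in $U$. For the $C^1$ statement: $|\of(x)| \le \int_{W_x} |f| \, dm_{W_x} \le |f|_{C^0(\cW^u)}$ since $m_{W_x}(W_x) = 1$, and writing the integrals in the leaf coordinate $t$,
\[
|\of(x) - \of(y)| \le \int_0^1 |f(s_{W_x},t) - f(s_{W_y},t)| \, dt \le d(x,y) \sup_{U' \in \cW^u} H^1_{U'}(f) ,
\]
so $|\of|_{C^1(\cW^u)} \le |f|_{C^1(\cW^u)}$.

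\emph{Step 2: extension to $f \in \cB$.} Pick $f_n \in C^1(\cW^u)$ with $f_n \to f$ in $\|\cdot\|_{\cB}$. Since $f \mapsto \of$ is linear, $\of_n - \of_m = \overline{f_n - f_m}$, so Step~1 gives $|\of_n - \of_m|_{C^\beta(\cW^u)} \le \|f_n - f_m\|_s + \sup_U H^\beta_U(\overline{f_n-f_m}) \le \|f_n - f_m\|_s + \|f_n - f_m\|_u \le \|f_n - f_m\|_{\cB}$; hence $(\of_n)$ is Cauchy in the Banach space $C^\beta(\cW^u)$ and converges there to some $\bar g$. On the other hand $f_n \to f$ also in $\cB_w$ (as $|\cdot|_w \le \|\cdot\|_{\cB}$), and $D^1_W$ is $|\cdot|_w$-continuous, so $\of_n(x) = \langle D^1_{W_x}(f_n), \bo \rangle \to \langle D^1_{W_x}(f), \bo \rangle = \of(x)$ pointwise; since $\of_n \to \bar g$ uniformly as well, $\of = \bar g \in C^\beta(\cW^u)$. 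Finally, letting $n \to \infty$ in the inequalities of Step~1 and using that $|\cdot|_w$, $\|\cdot\|_s$, $\|\cdot\|_u$ are continuous along $\cB$-convergent sequences delivers the claimed bounds for $f$.

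The only point requiring genuine care is the consistency check in Step~2 — that the $C^\beta$-limit of $\of_n$ coincides with the pointwise object $\of$ defined through $D^1_W$ — which is precisely why the two modes of convergence ($C^\beta$-norm and pointwise via $\cB_w$) are invoked together; everything else is routine bookkeeping with the three test-function families.
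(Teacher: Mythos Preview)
Your proof is correct and follows the same approach as the paper: apply the definitions of $|\cdot|_w$, $\|\cdot\|_s$, and $\|\cdot\|_u$ with the test function $\bo$ (and the pair $(\bo,\bo)$ for the unstable norm) to obtain the bounds for $f \in C^1(\cW^u)$, then pass to $\cB$ by density. The paper compresses your Step~2 into the single phrase ``As usual, by density, it suffices to prove the first statement for $f \in C^1(\cW^u)$,'' whereas you spell out the Cauchy argument and the identification of the $C^\beta$-limit with the pointwise object defined via $D^1_W$; this extra care is warranted but does not constitute a different method.
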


\begin{proof}
As usual, by density, it suffices to prove the first statement for $f \in C^1(\cW^u)$.  Since $\of$ is constant on stable leaves,
it suffices to check the $C^1$ norm on a single $U \in \cW^u$. 

Fix $U \in \cW^u$ and let $x \in U$.  Then,
\[
|\of(x)| = \left| \int_{W_x} f \, dm_{W_x} \right| \le | f |_w |\bo|_{C^1(W_x)} = |f|_w . 
\]
Thus $\sup_{x \in U} |\of(x)| \le |f|_w \le \| f \|_s$.  Next, if $x, y\in U$, then,
\[
|\of(x) - \of(y)| =\left| \int_{W_x} f \, dm_{W_x} - \int_{W_y} f \, dm_{W_y} \right| \le \| f \|_u |x-y|^\beta  .
\]
Putting these estimates together proves the first statement of the lemma.

The second statement is proved similarly, using $\sup_{x \in M} |f(x)|$ in place of $\| f\|_s$ and $H^1_U(f)$ in place of
$\| f \|_u$.
\end{proof}

Recall that a standard pair is a measure supported on a single $U \in \cW^u$ together with a smooth density $f \in C^1(\cW^u)$, which we denote by $f \delta_U$.
We next prove Lemma~\ref{lem:stand}, that standard pairs belong to $\cB$.

\begin{proof}[Proof of Lemma~\ref{lem:stand}]
Let $U \in \cW^u$ and $f \in C^1(U)$.  To show that $f\delta_U \in \cB$, we must approximate $f\delta_U$ in the $\cB$ norm
by elements of $C^1(\cW^u)$ viewed as densities with respect to Lebesgue measure.

For $\ve > 0$, let $N_{\ve/2}(U)$ denote the $\ve/2$ neighborhood of $U$ in $M$.  $N_{\ve/2}(U)$ is a horizontal strip of width
$\ve$.  We extend $f$ to $N_{\ve/2}$ as follows:  Define $\of_\ve$ to be a function constant on stable leaves in 
$N_{\ve/2}(U)$ such that $\of_\ve(x) = \ve^{-1} f(x)$ for $x \in U$.  On $M\setminus N_{\ve/2}(U)$, 
set $\of_\ve = 0$.
It follows that $|\of_\ve|_{C^1(\cW^u)} = \ve^{-1} |f|_{C^1(U)}$.

We will show that $(\of_\ve)_{\ve > 0}$ forms a Cauchy sequence in $\cB$.  Let $0 < \ve_1 < \ve_2$.  Suppose
$W \in \cW^s$ and $\vf \in C^\alpha(W)$ with $|\vf|_{C^\alpha(W)} \le 1$.  Define $\vf_U = \vf(W \cap U)$.  Then,
\[
\int_W (\of_{\ve_1} - \of_{\ve_2}) \vf \, dm_W = \int_W (\of_{\ve_1} - \of_{\ve_2}) (\vf - \vf_U) \, dm_W
+ \int_W (\of_{\ve_1} - \of_{\ve_2}) \vf_U \, dm_W . 
\] 
The second integral above is 0 since $\vf_U$ is a constant and $\int_W \of_{\ve_1} \, dm_W = \int_W \of_{\ve_2} \, dm_W = f(W \cap U)$.

To estimate the first integral above, we use that,
\[
\sup \{ |\vf(x) - \vf_U| : x \in W \cap N_{\ve}(U) \} \le \ve^\alpha H^\alpha_W(\vf) , \quad \forall \, \ve > 0 .
\]
We split up the integral into two parts, using the fact that $f_{\ve_1} - f_{\ve_2} \ge 0$ on $W_{\ve_1} := W \cap N_{\ve_1/2}(U)$.
Similarly, define $W_{\ve_2} = W \cap N_{\ve_2/2}(U)$.
\[
\begin{split}
\int_W (\of_{\ve_1} & - \of_{\ve_2}) (\vf - \vf_U) \, dm_W
= \int_{W_{\ve_1}} (\of_{\ve_1} - \of_{\ve_2}) (\vf - \vf_U) \, dm_W - \int_{W_{\ve_2}\setminus W_{\ve_1}} \of_{\ve_2} \, (\vf - \vf_U) \, dm_W \\
& \le \Big( \frac{\ve_1}{2} \Big)^\alpha |f|_{C^0(U)} \int_{-\ve_1/2}^{\ve_1/2} ( \ve_1^{-1} - \ve_2^{-1}) \, dt
+ 2 \Big( \frac{\ve_2}{2} \Big)^\alpha |f|_{C^0(U)} \int_{\ve_1/2}^{\ve_2/2} \ve_2^{-1} \, dt \\
& \le \Big( \frac{\ve_1}{2} \Big)^\alpha |f|_{C^0(U)} \Big(1 - \frac{\ve_1}{\ve_2} \Big)
+ \Big( \frac{\ve_2}{2} \Big)^\alpha |f|_{C^0(U)} \Big(1 - \frac{\ve_1}{\ve_2} \Big)
\le 2 \Big( \frac{\ve_2}{2} \Big)^\alpha |f|_{C^0(U)} .
\end{split}
\]
Taking the appropriate suprema yields, $\| \of_{\ve_1} - \of_{\ve_2} \|_s \le 2 (\frac{\ve_2}{2})^\alpha |f|_{C^0(U)}$.

Next, let $W_i \in \cW^s$, $\vf_i \in C^1(W_i)$ with $|\vf_i|_{C^1(W_i)} \le 1$, $i =1,2$, and
$d_0(\vf_1, \vf_2) = 0$.  On the one hand, by the previous estimate with $\alpha = 1$, we have
\begin{equation}
\label{eq:first}
\left| \int_{W_1} (\of_{\ve_1} - \of_{\ve_2}) \vf_1 \, dm_{W_1} - \int_{W_1} (\of_{\ve_1} - \of_{\ve_2}) \vf_1 \, dm_{W_1} \right|
\le 2 \ve_2 |f|_{C^0(U)} .
\end{equation}
On the other hand, letting $\Theta : W_2 \to W_1$ denote the holonomy from $W_2$ to $W_1$, we note that
$\vf_1 \circ \Theta = \vf_2$ since $d_0(\vf_1, \vf_2) = 0$.  Thus,
\[
\int_{W_1} \of_{\ve_1} \vf_1 \, dm_{W_1} - \int_{W_2} \of_{\ve_1} \vf_2 \, dm_{W_2}
= \int_{W_2} (\of_{\ve_1} \circ \Theta - \of_{\ve_1}) \, \vf_2 \, dm_{W_2}
\le |\vf_2|_{C^0(W_2)} d(W_1, W_2) H^1_U(f) .
\]
A similar estimate holds for $\of_{\ve_2}$, so that
\[
\left| \int_{W_1} (\of_{\ve_1} - \of_{\ve_2}) \vf_1 \, dm_{W_1} - \int_{W_1} (\of_{\ve_1} - \of_{\ve_2}) \vf_1 \, dm_{W_1} \right|
\le 2 d(W_1, W_2) H^1_U(f) .
\]
Combining this estimate with \eqref{eq:first} and interpolating, we have that the difference in the integrals is bounded by
\[
2 H^1_U(f) \min \{ \ve_2, d(W_1, W_2) \} \le 2H^1_U(f) \ve_2^{1-\beta} d(W_1, W_2)^\beta .
\]
Dividing through by $d(W_1, W_2)^\beta$ and taking the appropriate suprema yields
$\| \of_{\ve_1} - \of_{\ve_2} \|_u \le 2 H^1_U(f) \ve_2^{1-\beta}$.

Thus $(\of_\ve)_{\ve >0}$ forms a Cauchy sequence in $\cB$ and therefore converges.  Moreover, once it exists, this limit
must coincide with the limit of the sequence in $(C^1(\cW^s))^*$, due to injectivity (Lemma~\ref{lem:embeddings}),
and is thus the measure $f\delta_U$. 
\end{proof}


\subsection{Quasi-compactness of the transfer operator $\Lp_0$:  Proof of Theorem~\ref{thm:L0}}
\label{quasi L0}

Recall that the map $T_0$ satisfies $T_0(M) = \cup_{i=1}^\kappa U_i = \oM_0$.   
According to Lemmas~\ref{lem:sing L} and \ref{lem:avg}, 
for $f \in \cB$, the transfer operator $\Lp_0$ is defined by \eqref{eq:L0 def}, and its
iterates by
\[
\Lp_0^n f = \sum_{i=1}^\kappa \kappa^{-1} \bar g \delta_{U_i} \, , \quad \mbox{where} \quad
\bar g(x) = \kappa^{-n+1} \sum_{T_0^nW = x} \of_W , \quad \mbox{for $x \in U_i$,}
\]
and  $\of(x) = \of_{W_x}$ is defined by \eqref{eq:avg dist}. The following lemma
provides the crucial Lasota-Yorke inequalities for $\Lp_0$.

\begin{lemma}
\label{lem:L0 bounded}
$\Lp_0$ is a bounded operator on $\cB$.  Moreover, for each $f \in \cB$, and $n \ge 1$,  we have
\begin{eqnarray}
  | \Lp_0^n f |_w  & \le & |f|_w,   \label{eq:L0 weak} \\
  \| \Lp_0^n f \|_s & \le &  |f|_w, \label{eq:L0 stable}  \\ 
  \| \Lp_0^n f \|_u & \le & \kappa^{-\beta n} \| f \|_u.  \label{eq:L0 unstable} 
\end{eqnarray}
\end{lemma}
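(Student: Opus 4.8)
The plan is to use the explicit description of $\Lp_0^n f$ as a finite sum of standard pairs, to bound the $\cB$-norm of a single standard pair by the $C^0$ and $C^\beta$ norms of its density, and then to insert the estimates for $\of$ from Lemma~\ref{lem:avg}. First I would record the point-mass description of a standard pair on a stable leaf, which is implicit in the proof of Lemma~\ref{lem:stand}: for $h \in C^1(U)$, $U \in \cW^u$, and any $W \in \cW^s$, the intersection $W \cap U$ is a single point and $\langle D^1_W(h\delta_U), \vf \rangle = h(W\cap U)\,\vf(W\cap U)$ (pass to the limit in the approximants $\of_\ve$ of that proof). From $|\vf(W\cap U)| \le |\vf|_{C^0(W)} \le |\vf|_{C^\alpha(W)}$ one reads off directly that $|h\delta_U|_w \le \|h\delta_U\|_s \le |h|_{C^0(U)}$. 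For the strong unstable norm, take $W_1, W_2 \in \cW^s$ and $\vf_i \in C^1(W_i)$ with $d_0(\vf_1,\vf_2) = 0$: the points $W_1 \cap U$ and $W_2 \cap U$ share the vertical coordinate of $U$, so $d_0(\vf_1,\vf_2)=0$ gives $\vf_1(W_1\cap U) = \vf_2(W_2\cap U)$, and therefore the difference of the two integrals equals $\vf_1(W_1\cap U)\big(h(W_1\cap U) - h(W_2\cap U)\big)$, which is at most $H^\beta_U(h)\,d(W_1,W_2)^\beta$. Hence $\|h\delta_U\|_u \le H^\beta_U(h)$.

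Next I would estimate the densities in the representation $\Lp_0^n f = \sum_{i=1}^\kappa h_i \delta_{U_i}$ coming from Lemma~\ref{lem:sing L} and its iterate, where $h_i(x) = \kappa^{-n} \sum_{T_0^n W = x} \of_W$ on $U_i$ and the sum runs over the $\kappa^{n-1}$ preimage leaves. For $f \in C^1(\cW^u)$, Lemma~\ref{lem:avg} gives $|\of_W| \le |\of|_{C^0(\cW^u)} \le |f|_w$ for every $W$, so $|h_i|_{C^0(U_i)} \le \kappa^{-1}|f|_w$ and $\sum_i |h_i|_{C^0(U_i)} \le |f|_w$. For the H\"older constant I would use that the branches of $T_0^{-n}$ contract horizontal distances exactly by the factor $\kappa^{-n}$: for $x, y \in U_i$ the $\kappa^{n-1}$ preimage leaves can be matched so that paired leaves $W, \tilde W$ satisfy $d(W, \tilde W) = \kappa^{-n}|x-y|$, whence (again by Lemma~\ref{lem:avg}) $|\of_W - \of_{\tilde W}| \le \|f\|_u \, d(W,\tilde W)^\beta = \|f\|_u \kappa^{-n\beta}|x-y|^\beta$; summing the $\kappa^{n-1}$ pairs gives $H^\beta_{U_i}(h_i) \le \kappa^{-1-n\beta}\|f\|_u$, hence $\sum_i H^\beta_{U_i}(h_i) \le \kappa^{-n\beta}\|f\|_u$. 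Combining these with the single-pair bounds of the previous paragraph and subadditivity of the three norms over $i = 1, \dots, \kappa$ proves \eqref{eq:L0 weak}, \eqref{eq:L0 stable} and \eqref{eq:L0 unstable} for $f \in C^1(\cW^u)$; in particular $\|\Lp_0 f\|_{\cB} \le \|f\|_{\cB}$ there.

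Finally, to obtain boundedness of $\Lp_0$ on $\cB$ and the inequalities for all $f \in \cB$, I would argue by density: since $C^1(\cW^u)$ is dense in $\cB$ and $\|\Lp_0^n(f-g)\|_{\cB} \le \|f - g\|_{\cB}$ on it, $\Lp_0^n$ extends to a bounded operator on $\cB$; this extension coincides with the distributional definition $\Lp_0^n f(\vf) = f(\vf \circ T_0^n)$ because $\cB$ embeds continuously in $(C^1(\cW^s))^*$ (Lemma~\ref{lem:embeddings}) and $\vf \circ T_0^n$ is constant on stable leaves, hence belongs to $C^1(\cW^s)$. The three inequalities then pass to the limit. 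The only point requiring genuine care --- the main, if modest, obstacle --- is the geometric bookkeeping in the second paragraph: that $\#\{W : T_0^n W = x\} = \kappa^{n-1}$ for $x \in \oM_0$ and that preimage leaves can be paired with contraction factor exactly $\kappa^{-n}$. This comes from the fact that the first $T_0$-preimage of a point of $\oM_0$ is a \emph{single} stable leaf (its vertical coordinate must match that of one of the $U_i$), while each further application of $T_0^{-1}$ splits a stable leaf into $\kappa$ leaves, one in each $R_j$, and every branch contracts horizontal distance by $\kappa^{-1}$.
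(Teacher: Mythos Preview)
Your proof is correct and follows essentially the same approach as the paper. The paper computes $\int_W \Lp_0^n f\,\vf$ directly via the change of variables $W_j \mapsto W$ and bounds each summand using $|\of_{W_j}|\le |f|_w$ and $|\of_{W_j^1}-\of_{W_j^2}|\le \|f\|_u\, d(W_j^1,W_j^2)^\beta$ (exactly the content of Lemma~\ref{lem:avg}), whereas you first package these as abstract bounds $\|h\delta_U\|_s\le |h|_{C^0(U)}$ and $\|h\delta_U\|_u\le H^\beta_U(h)$ for a single standard pair and then apply subadditivity over $i$; the organization is slightly more modular but the computation is the same.
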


\begin{proof}
From \eqref{eq:L0 def} and Lemma~\ref{lem:stand}, it follows that if $f \in C^1(\cW^u)$, 
then $\Lp_0f \in \cB$.  To extend
this to the completion, we need to show that $\Lp_0$ is continuous in the $\cB$ norm.  This follows from the Lasota-Yorke
inequalities, which we now proceed to prove.

\medskip
\noindent
{\em Proof of Weak and Strong Stable Norm Estimates.}  Let $f \in \cB$, $W \in \cW^s$ and $\vf \in C^\alpha(W)$
such that $|\vf|_{C^\alpha(W)} \le 1$.  Then for $n \ge 1$,
\[
\begin{split}
\int_W \Lp_0^n f \, \vf \, dm_W & = \sum_{i=1}^\kappa \; \sum_{T_0^n W_j = W \cap U_i} \kappa^{-n} \int_{W_j} f \, \vf \circ T_0^n \, dm_{W_j}
\\
& \le \sum_{i=1}^\kappa \; \sum_{T_0^n W_j = W \cap U_i} \kappa^{-n} \of_{W_j} \, \vf (U_i \cap W),
\end{split}
\]
where we have used the fact that $\vf \circ T_0^n$ is constant on $W_j$ since $T_0^n(W_j) = W \cap U_i$.  For each $i$, the cardinality
of $\{ W_j \in \cG_n(W) : T_0^nW_j = W \cap U_i \}$ is $\kappa^{n-1}$.  Also, $|\of_{W_j}| \le |f|_w$ by Lemma~\ref{lem:avg}.
Combining these facts yields,
\[
\int_W \Lp_0^n f \, \vf \, dm_W \le |f|_w .
\]
Now taking the supremum over $W \in \cW^s$ and $\vf \in C^\alpha(W)$ yields \eqref{eq:L0 stable}.  On the other hand, replacing
$\vf \in C^\alpha(W)$ by $\vf \in C^1(W)$ in the estimates above yields \eqref{eq:L0 weak}.

\medskip
\noindent
{\em Proof of Strong Unstable Norm Estimate.}
Take again $f \in \cB$, $W^k \in \cW^s$,  $\vf_k \in C^1(W^k)$, with $|\vf_k|_{C^1(W^k)} \le 1$, $k=1,2$, and $d_0(\vf_1, \vf_2) =0$.
Then for $n \ge 1$, denoting by $\{ W^k_j \}_j = \cG_n(W^k)$ the set of stable leaves such that $T_0^nW^k_j = W^k \cap U_i$
for some $i$, we estimate
\[
\begin{split}
\int_{W^1} \Lp_0^n f \, \vf_1 \, dm_{W_1} & - \int_{W_2} \Lp_0^n f \, \vf_2 \, dm_{W_2} \\
& = \kappa^{-n} \sum_{i=1}^\kappa \; \sum_{T_0^nW_j^1 = W^1 \cap U_i} \int_{W_j^1} f \, \vf_1 \circ T_0^n \, dm_{W^1_j} - \int_{W_j^2} f \, \vf_2 \circ T_0^n \, dm_{W^2_j} \\
& = \kappa^{-n} \sum_{i=1}^\kappa \; \sum_{T_0^nW_j^1 = W^1 \cap U_i} \vf_1(W^1 \cap U_i) \left( \int_{W_j^1} f \, dm_{W^1_j} - \int_{W_j^2} f \,  dm_{W^2_j} \right) \\
& \le \kappa^{-\beta n} d(W^1, W^2)^\beta \| f \|_u ,
\end{split}
\]
where we have used the fact that $\vf_1(W^1 \cap U_i) = \vf_2(W^2 \cap U_i)$ since $d_0(\vf_1, \vf_2) =0$.
Dividing through by $d(W_1, W_2)^\beta$ and taking the appropriate suprema yields the required estimate.
\end{proof}

\begin{cor}
\label{cor:L0}
$\Lp_0$ is quasi-compact as an operator on $\cB$.  Its spectral radius equals 1, and its essential spectral radius is at most
$\kappa^{-\beta}$.
\end{cor}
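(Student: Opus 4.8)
The plan is to deduce Corollary~\ref{cor:L0} from the Lasota--Yorke estimates of Lemma~\ref{lem:L0 bounded} by the same Hennion--Nussbaum argument used to obtain Proposition~\ref{prop:quasi} from Proposition~\ref{prop:ly}. First I would add the strong stable and strong unstable bounds \eqref{eq:L0 stable} and \eqref{eq:L0 unstable}: for every $f \in \cB$ and $n \ge 1$,
\[
\| \Lp_0^n f \|_{\cB} = \| \Lp_0^n f \|_s + \| \Lp_0^n f \|_u \le \kappa^{-\beta n} \| f \|_u + |f|_w \le \kappa^{-\beta n} \| f \|_{\cB} + |f|_w .
\]
This is the single Lasota--Yorke inequality we need: the coefficient of $\| f \|_{\cB}$ decays geometrically with rate $\kappa^{-\beta}$, and the remainder is controlled by the weak norm. (It is in fact sharper than the bound in Proposition~\ref{prop:ly}: there is no $\lambda^{\alpha n}\|f\|_s$ term, since $\| \Lp_0^n f \|_s$ is already dominated by $|f|_w$ after one step because $T_0$ collapses the stable direction.) Taking $n = 1$ already gives $\| \Lp_0 f \|_{\cB} \le \| f \|_{\cB}$, so the spectral radius of $\Lp_0$ on $\cB$ is at most $1$.

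Second, I would combine this inequality with the relative compactness of the unit ball of $\cB$ in $\cB_w$ (Lemma~\ref{lem:compact}) and the weak bound \eqref{eq:L0 weak}, which says precisely that $\Lp_0$ is power-bounded on $\cB_w$. These are exactly the hypotheses of the Hennion--Nussbaum theorem \cite{nussbaum, hennion spec}, so it follows that the essential spectral radius of $\Lp_0$ on $\cB$ is at most $\kappa^{-\beta}$.

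Third, to see that the spectral radius equals $1$, I would exhibit a fixed vector. Applying the iterate formula $\Lp_0^n f = \sum_{i=1}^\kappa \kappa^{-1} \bar g \, \delta_{U_i}$, with $\bar g(x) = \kappa^{-n+1} \sum_{T_0^n W = x} \of_W$, to $f = \bo$: since $\overline{\bo}_W = \int_W \bo \, dm_W = 1$ for every $W \in \cW^s$ and $\# \{ W : T_0^n W = x \} = \kappa^{n-1}$, we obtain $\Lp_0^n \bo = \mu_0 := \kappa^{-1} \sum_{i=1}^\kappa \delta_{U_i}$ for all $n \ge 1$. Hence $\Lp_0 \mu_0 = \Lp_0^2 \bo = \mu_0$, and $\mu_0 \in \cB$ because $\bo \in \cB$ and $\Lp_0$ maps $\cB$ into $\cB$ (equivalently, $\mu_0$ is a finite sum of standard pairs, which lie in $\cB$ by Lemma~\ref{lem:stand}). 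Thus $1$ is an eigenvalue of $\Lp_0$ on $\cB$, so the spectral radius is $1 > \kappa^{-\beta}$ and $\Lp_0$ is quasi-compact.

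Every step is routine given Lemma~\ref{lem:L0 bounded}, so there is no genuine obstacle. The only point requiring care is that the weak-norm bound \eqref{eq:L0 weak} is genuinely used, not merely the strong-norm estimates, since the Hennion--Nussbaum argument requires $\Lp_0$ to be bounded on $\cB_w$ as well as on $\cB$. Verifying $\Lp_0 \mu_0 = \mu_0$ directly from the distributional definition $\Lp_0 \mu(\vf) = \mu(\vf \circ T_0)$, by unwinding how $T_0$ stretches $U_i \cap R_j$ onto $U_j$, is the only computation with any content, and it is circumvented by the iterate formula above.
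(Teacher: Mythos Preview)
Your proof is correct and follows essentially the same route as the paper: combine the Lasota--Yorke estimates of Lemma~\ref{lem:L0 bounded} with Lemma~\ref{lem:compact} via Hennion--Nussbaum, bound the spectral radius by $1$ from $\kappa^{-\beta n}\|f\|_u + |f|_w \le \|f\|_u + \|f\|_s = \|f\|_{\cB}$, and exhibit $\mu_0 = \kappa^{-1}\sum_i \delta_{U_i}$ as a fixed vector. The only cosmetic difference is that the paper verifies $\Lp_0\mu_0 = \mu_0$ directly from \eqref{eq:L0 def} by computing $\overline{(\delta_{U_i})} = \bo$, whereas you obtain it by observing $\Lp_0^n \bo = \mu_0$ for all $n\ge 1$; both are one-line checks.
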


\begin{proof}
That the essential spectral radius is at most $\kappa^{-\beta}$ follows from the estimate,
\[
\| \Lp_0^n f \|_{\cB} \le \kappa^{-\beta n} \| f\|_u + |f|_w \le \kappa^{-\beta n} \| f \|_{\cB} + |f|_w,
\] 
and the standard Hennion argument \cite{hennion spec}.  Replacing $|f|_w$ above by $\|f \|_s$ 
yields $\| \Lp_0^n f \|_{\cB} \le \|f \|_{\cB}$,
which implies that the spectral radius is at most 1.  To see that the spectral radius is in fact 1, note that 
$\mu_0 := \kappa^{-1} \sum_{i=1}^\kappa \delta_{U_i}$ belongs to $\cB$ by Lemma~\ref{lem:stand}, and 
$\Lp_0 \mu_0 = \mu_0$ since $\overline{(\delta_{U_i})} = \bo$ on $M$ for each $i$,
where $\overline{(\delta_{U_i})}$ is defined by \eqref{eq:avg dist}.
\end{proof}

\begin{proof}[Proof of Theorem~\ref{thm:L0}]
Lemma~\ref{lem:L0 bounded} and Corollary~\ref{cor:L0} together prove the first two statements of Theorem~\ref{thm:L0}.

To prove that $\Lp_0$ has a spectral gap, we could proceed to prove a spectral decomposition as in Lemma~\ref{lem:spec};
however, in the case $\lambda=0$, we have a simpler argument at our disposal. 

Let $\oT_0$ be the restriction of $T_0$ to $\oM_0 = \cup_{i=1}^\kappa U_i$. It is clear that 
$\oT_0$ is a uniformly expanding map.  Indeend $\{ U_i \}_{i=1}^\kappa$
constitutes a finite Markov partition for $\oT_0$, and $\oT_0$ is full-branched map on this partition:  $\oT_0(U_i) = \oM_0$ for
each $i$.  Let $\overline{\Lp}_0$ denote the transfer operator corresponding to $\oT_0$.  It is by now a classical result
that $\overline{\Lp}_0$ has a spectral gap on $C^\beta(\oM_0)$ (see, for example \cite[Chapter 5]{ruelle}).

It is clear from
\eqref{eq:L0 def} that restricted to $\cup_{i=1}^\kappa U_i$, and $\Lp_0 = \overline{\Lp}_0$.  Moreover, by 
Lemma~\ref{lem:avg} it follows that $\Lp_0(\cB)$ can be identified with a subset of $C^\beta(\oM_0)$.  
Thus any element of the peripheral
spectrum of $\Lp_0$, must belong to $C^\beta(\oM_0)$.  Since $\overline{\Lp}_0$ has no other eigenvalues aside from
1 on the unit circle, and 1 is a simple eigenvalue, this must also be true of $\Lp_0$; and by quasi-compactness,
$\Lp_0$ has a spectral gap on $\cB$.  

The formula for the unique invariant probability measure $\mu_0$
follows by inspection as in the proof of Corollary~\ref{cor:L0}.  Finally, the exponential convergence to $\mu_0$
follows from the spectral gap.
\end{proof}


\subsection{Proof of Theorem~\ref{thm:conv}}
\label{perturb}

Recall the norm, defined in Section~\ref{sec:sing}, for an operator $\Lp: \cB \to \cB_w$,
\[
||| \Lp ||| = \sup\{ |\Lp f |_w : \| f \|_{\cB} \le 1 \} .
\] 
In this section, we will prove Theorem~\ref{thm:conv}, whose main estimate is the bound on 
$||| \Lp_0 - \Lp_\lambda  |||$ stated in \eqref{eq:close}.

\begin{proof}[Proof of Theorem~\ref{thm:conv}]
Let $f \in \cB$, $W \in \cW^s$ and $\vf \in C^1(W)$ such that $| \vf |_{C^1(W)} \le 1$.  Note that 
$W' \in \cW^s$ satisfies $T_0 W' \subset W$ if and only if $T_\lambda W' \subset W$ for all $\lambda \le \lambda_0$.  
Moreover, for each $i=1, \ldots \kappa$, there is a unique $W_i \in\cW^s$ such that $T_0W_i = W \cap U_i$. 
We estimate,
\[
\begin{split}
\int_W (\Lp_0 - \Lp_\lambda) f \, \vf \, dm_W & = \sum_{i=1}^\kappa \kappa^{-1} \int_{W_i} f \, (\vf \circ T_0 - \vf \circ T_\lambda) \, dm_{W_i} \\
& \le \sum_{i=1}^\kappa \kappa^{-1} \| f \|_s |\vf \circ T_0 -  \vf \circ T_\lambda|_{C^\alpha(W_i)} .
\end{split}
\]
For $x \in W_i$, we have $T_0(x) = W \cap U_i$ and $T_\lambda(x) \subset W \cap N_{\lambda}(U_i)$, thus,
\[
|\vf \circ T_0(x) - \vf \circ T_\lambda(x)| \le H^1_W(\vf) |T_0(x) - T_\lambda(x)| \le H^1_W(\vf) \lambda .
\]
Now for $x,y \in W_i$, since $\vf \circ T_0(x) = \vf \circ T_0(y)$, we have
\[
\begin{split}
|(\vf \circ T_0 - \vf \circ T_\lambda)(x) - (\vf \circ T_0 - \vf \circ T_\lambda)(y)| 
& = |\vf \circ T_\lambda(x) - \vf \circ T_\lambda(y)|
\le H^1_W(\vf) |x-y| \\
& \le H^1_W(\vf) |x-y|^\alpha \lambda^{1-\alpha}.
\end{split}
\]
Putting these estimates together, we conclude $|\vf \circ T_0 - \vf \circ T_\lambda|_{C^\alpha(W_i)} \le \lambda^{1-\alpha} |\vf|_{C^1(W)}$.
Thus,
\[
\int_W (\Lp_0 - \Lp_\lambda) f \, \vf \, dm_W \le \| f \|_s \lambda^{1-\alpha},
\]
and taking the appropriate suprema proves \eqref{eq:close}.

The claim regarding the spectra and spectral projectors of $\Lp_0$ and $\Lp_{\lambda}$ follow from
\cite[Corollary 1]{KL} and the uniform (in $\lambda$) Lasota-Yorke inequalities given by
Proposition~\ref{prop:ly} and Lemma~\ref{lem:L0 bounded}.  Since $\Lp_0$ has a spectral gap, the convergence of the
spectral projectors also implies the convergence of the invariant measures $\mu_\lambda$, as claimed.
\end{proof}



\begin{thebibliography}{999999}

\bibitem[BSV]{bahsoun} W.~Bahsoun and S.~Vaienti, \emph{Escape rates formulae and metastability for randomly perturbed
maps}, Nonlinearity {\bf 26} (2013), 1415-1438. 

\bibitem[BGNN]{galatolo linear}  W.~Bahsoun, S.~Galatolo, I.~Nisoli and X.~Niu, \emph{A rigorous computational approach
to linear response}, Nonlinearity {\bf 31}:3 (2018), 1073-1109.

\bibitem[B1]{Ba1} V.~Baladi, \emph{Anisotropic Sobolev spaces and
    dynamical transfer operators: $\cC^\infty$ foliations},	 Algebraic and Topological Dynamics, Sergiy
    Kolyada, Yuri Manin and Tom Ward, eds.	Contemporary Mathematics, Amer. Math. Society, (2005)
    123-136.

\bibitem[B2]{baladi review}  V.~Baladi, \emph{The quest for the ultimate anisotropic Banach space}, J. Stat. Phys.
	{\bf 166} (2017), 525-557.

\bibitem[B3]{baladi char} V.~Baladi, \emph{Characteristic functions as bounded multipliers on anisotropic spaces}, 
to appear in Proc. Amer. Math. Soc.

\bibitem[B4]{baladi book2} V.~Baladi, \emph{Dynamical Zeta Functions and Dynamical Determinants for Hyperbolic Maps},
Ergebnisse der Mathematik und ihrer Grenzgebiete. 3. Folge / A Series of Modern Surveys in Mathematics {\bf 68},
Springer International Publishing (2018), 291 pp.

\bibitem[BDL]{bdl}  V.~Baladi, M.F.~Demers and C.~Liverani, \emph{Exponential decay of correlations for finite horizon
Sinai billiard flows}, Invent. Math. {\bf 211}:1 (2018), 39-177.

\bibitem[BG1]{bg1} V.~Baladi and S.~Gouezel, \emph{Good Banach spaces for piecewise
hyperbolic maps via interpolation}, Annales de l'Institute Henri Poincar\'e / Analyse
non lin\'eare {\bf 26} (2009), 1453-1481.

\bibitem[BG2]{bg2} V.~Baladi and S.~Gouezel, \emph{Banach spaces for piecewise cone
hyperbolic maps}, J. Modern Dynam. {\bf 4} (2010), 91-137.


\bibitem[BL]{BaL} V. Baladi and C. Liverani,
\emph{Exponential decay of correlations for piecewise cone hyperbolic
contact flows,} Comm. Math. Phys. {\bf 314} (2012) 689-773.

\bibitem[BT1]{BaT} V. Baladi and M. Tsujii, {\em Anisotropic H\"older
    and Sobolev spaces for hyperbolic diffeomorphisms}, Ann. Inst. Fourier.
    {\bf 57} (2007), 127-154.

\bibitem[BT2]{BaT2}  V.~Baladi and M.~Tsujii, \emph{Dynamical determinants and spectrum for hyperbolic
diffeomorphisms}, in Probabilistic and Geometric Structures in Dynamics, K.~Burns, D.~Dolgopyat, Ya.~Pesin (eds.),
Contemp. Math. {\bf 469} Amer. Math. Soc., Providence, RI (2008), pp. 29--68.

\bibitem[BKL]{BKL} M. Blank, G. Keller and C. Liverani, {\em
    Ruelle-Perron-Frobenius spectrum for Anosov maps}, Nonlinearity, {\bf 15}:6 (2001), 1905-1973.

\bibitem[CG]{chazottes} J.-R. Chazottes and S.~Gouezel, \emph{On almost-sure versions of classical limit theorems for dynamical
systems}, Prob. Th. and Related Fields, {\bf 138} (2007), 195-234.

\bibitem[CM]{chernov book}  N.~Chernov and R.~Markarian, \emph{Chaotic Billiards}, 
Mathematical Surveys and Monographs, {\bf 127}, AMS, Providence, RI (2006), 316 pp.

\bibitem[DDGLP]{magno1} G.~Del Magno, P.~Duarte, J.P.~Gaiv\~ao, J.~Lopes Dias, and D.~Pinheiro,
\emph{SRB measures for polygonal billiards with contracting reflection laws}, 
Commun. Math. Phys. {\bf 329}:2 (2014), 687--723.

\bibitem[DDGL]{magno2} G.~Del Magno, P.~Duarte, J.P.~Gaiv\~ao, and J.~Lopes Dias,
\emph{Ergodicity of polygonal slap maps}, Nonlinearity {\bf 27}:8 (2014), 1969-1983.

\bibitem[DL]{demers liverani} M.F.~Demers and C.~Liverani, \emph{Stability of statistical
properties in two-dimensional piecewise hyperbolic maps}, Trans. Amer. Math. Soc.
{\bf 360}:9 (2008), 4777-4814.
\bibitem[D]{demers} M.F. Demers, \emph{Functional norms for Young towers},
Ergod. Th. Dynam. Sys. {\bf 30} (2010), 1371-1398.
 
   \bibitem[DZ1]{dz1} M.F.~Demers and H.-K.~Zhang, \emph{Spectral analysis
  for the transfer operator for the Lorentz gas,} J. Mod. Dyn. {\bf 5} (2011) 665--709.

  \bibitem[DZ2]{dz2} M.F.~Demers and H.-K.~Zhang, \emph{A functional
  analytic approach to perturbations of the Lorentz gas,} Comm. Math. Phys.
  {\bf 324} (2013) 767--830.

   \bibitem[DZ3]{dz3} M.F.~Demers and H.-K.~Zhang, \emph{Spectral analysis of hyperbolic systems with singularities,} 
   Nonlinearity  {\bf 27} (2014) 379--433. 
 
\bibitem[DT]{DT}  M.F.~Demers and M.~Todd, \emph{Slow and fast escape for open intermittent maps}, Comm. Math. Phys. {\bf 351}:2 (2017), 775-835. 

\bibitem[Do]{dolgo contract}  D.~Dolgopyat, \emph{On dynamics of mostly contracting diffeomorphisms},
Commun. Math. Phys. {\bf 213} (2000), 181--201.

\bibitem[FRS]{faure roy stro}  F.~Faure, N.~Roy and J.~Sj\"ostrand, \emph{Semiclassical approach for Anosov diffeomorphisms
and Ruelle resonances}, Open Math. J. {\bf 1} (2008), 35-81.
 
 \bibitem[FT1]{faure tsujii1}  F.~Faure and M.~Tsujii, \emph{Band structure of the Ruelle spectrum of contact Anosov flows}, 
 C.R. Math. Acad. Sic. Paris {\bf 351}:9-10 (2013), 385-391.  
 
 \bibitem[FT2]{faure tsujii2}  F.~Faure and M.~Tsujii, \emph{Semiclassical approach for the Ruelle-Pollicott spectrum of hyperbolic
 dynamics}, Analytic and Probabilistic Approaches to Dynamics in Negative Curvature, Springer INdAM Ser. 9, Springer (2014), 65-135.
 
 \bibitem[FT3]{faure tsujii3}  F.~Faure and M.~Tsujii, \emph{The semiclassical zeta function for geodesic flows on negatively
 curved manifolds}, Invent. Math. {\bf 208}:3 (2017), 851-998. 
  
\bibitem[FP]{ferg poll} A.~Ferguson and M.~Pollicott, \emph{Escape rates for Gibbs measures} Ergod. Theory Dyn. Syst. 
{\bf 32} (2012),  961--988.

\bibitem[Ga]{galatolo}  S.~Galatolo, \emph{Quantitative statistical stability, speed of convergence to equilibrium and
partially hyperbolic skew products}, J. de l'\'Ecole Polytechnique -- Math\'ematiques {\bf 5} (2018), 377-405.
  
\bibitem[GLP]{liv poll giu}  P.~Giulietti, C.~Liverani and M.~Pollicott, \emph{Anosov Flows and Dynamical Zeta Functions},
	 Annals of Mathematics {\bf 178}:2 (2013), 687--773.

\bibitem[G]{gouezel}  S.~Gouezel, \emph{Almost sure invariance principle for dynamical systems by
	spectral methods}, Ann. Prob. {\bf 38}:4 (2010), 1639--1671.

\bibitem[GL1]{liv gouezel} S.~Gou\"{e}zel and C.~Liverani,
    \emph{Banach spaces adapted to Anosov systems},	 Ergod.
    Th. and Dynam. Sys. {\bf 26}:1, 189--217 (2006).
    
\bibitem[GL2]{liv gouezel2} S.~Gou\"{e}zel and C.~Liverani, \emph{Compact locally maximal hyperbolic sets for smooth maps:  fine
statistical properties}, J. Diff. Geom. {\bf 79} (2008), 433-477.

\bibitem[H]{hennion spec} H.~Hennion, \emph{Sur un th\'eor\`eme spectral et son application aux noyaux Lipchitziens},
Proc. Amer. Math. Soc. {\bf 118}:2 (1993), 627-634.

 \bibitem[HH]{hennion} H. Hennion, L.Hevr\'e, {\em Limit Theorems for
Markov chains and stochastic properties of dynamical systems by
quasi-compactness}, {\bf 1766}, Lectures Notes in Mathematics,
Springer-Verlag, Berlin, 2001.   

\bibitem[Ka]{kato} T.~Kato, \emph{Perturbation Theory for Linear Operators}, 2nd edition, Grundlehren der mathematischen
Wissenchaften {\bf 132}, Springer: Berlin, 1984.

\bibitem[K]{keller} G.~Keller, \emph{Rare events, exponential hitting times and extremal indices via spectral perturbation},
Dyn. Syst. Int. J. {\bf 27} (2012), 11--27.

\bibitem[KL1]{KL} G. Keller, C. Liverani, {\em Stability of the spectrum for
    transfer operators}, Annali della Scuola Normale Superiore di Pisa,
    Scienze Fisiche e Matematiche, (4) {\bf XXVIII} (1999), 141-152.  
    
\bibitem[KL2]{kell liv 2}  G.~Keller and C.~Liverani, \emph{Rare events, escape rates, quasistationarity: some exact
formulae}, J. Stat. Phys. {\bf 135}:3 (2009),  519-534.    
    
  
\bibitem[L1]{Li04} C.~Liverani, 
{\it On contact Anosov flows,}  
Ann. of Math.  {\bf 159}  (2004) 1275--1312.   

\bibitem[L2]{liv deter} C.~Liverani, \emph{Fredholm determinants, Anosov maps and Ruelle resonances}, Discrete and Continuous Dynamical Systems {\bf 13}:5 (2005), 1203--1215.

  
\bibitem[LM]{liv maume} C.~Liverani and V.~Maume-Deschamps, \emph{Lasota-Yorke maps with holes: conditionally
invariant probability measures and invariant probability measures on the survivor set},
Annales de l'Institut Henri Poincar\'e Probability and Statistics, {\bf 39} (2003), 385--412. 

\bibitem[LT]{liv tsujii} C.~Liverani and M.~Tsujii, \emph{Zeta functions and Dynamical Systems}, Nonlinearity {\bf 19}: 10 (2006), 
	2467--2473.
	
\bibitem[MN]{melbourne nicol}  I.~Melbourne and M.~Nicol, \emph{Almost sure invariance principle for nonuniformly hyperbolic systems},
Commun. Math. Phys. {\bf 260} (2005), 393-401.	
	
 \bibitem[N]{nussbaum} R.D.~Nussbaum, \emph{The radius of the essential spectrum}, Duke Math. J. {\bf 37} (1970), 473-478.



 \bibitem[RS]{reed} M.~Reed and B.~Simon, \emph{Methods of Modern Mathematical Physics I:  Functional Analysis}, revised edition,
  Academic Press: San Diego,  1980, 400 pp.

\bibitem[RY]{rey bellet}  L.~Rey-Bellet and L.-S.~Young, \emph{Large deviations in non-uniformly hyperbolic dynamical systems},
	Ergod. Th. and Dynam. Systems {\bf 28} (2008), 587-612.
    
  \bibitem[R]{ruelle}  D.~Ruelle, \emph{Thermodynamic Formalism}, 2nd ed.  Cambridge University Press, Cambridge, UK (2004),
  174 pp.
    
    
\bibitem[T]{Tsu1} M.~Tsujii,
{\it Quasi-compactness of transfer operators for contact Anosov flows,}
Nonlinearity  {\bf 23}  (2010) 1495--1545.

\bibitem[Y1]{young poly} L.-S.~Young, \emph{Recurrence times and rates of mixing},
Israel J. Math. {\bf 110} (1999), 153--188.

\bibitem[Y2]{young SRB}  L.-S.~Young, \emph{What are SRB measures and which dynamical systems have them?}
J. Stat. Phys. {\bf 108} (2002), 733-754.

    
\end{thebibliography}
\end{document}